\title[Counting oriented trees in digraphs with large minimum semidegree]{Counting oriented trees in digraphs with large minimum semidegree}
\author[F.~Joos]{Felix Joos}
\address[F.~Joos]{Institut f\"ur Informatik, Universit\"at Heidelberg, Heidelberg,
Germany
}
\email{joos@informatik.uni-heidelberg.de}
\author[J.~Schrodt]{Jonathan Schrodt}
\address[J.~Schrodt]{Institut f\"ur Informatik, Universit\"at Heidelberg, Heidelberg,
	Germany
}
\email{schrodt@informatik.uni-heidelberg.de}
\date{\today}
\thanks{The research leading to these results was partially supported by the Deutsche Forschungsgemeinschaft (DFG, German Research Foundation) -- 428212407.
}
\newtheorem{theorem}[algorithm]{Theorem}
\newtheorem{lemma}[algorithm]{Lemma}
\newtheorem{cor}[algorithm]{Corollary}
\theoremstyle{definition}
\newtheorem{defin}[algorithm]{Definition}
\newtheoremstyle{claimstyle}{5pt}{5pt}{\em}{5pt}{\em}{:}{5pt}{}
\theoremstyle{claimstyle}
\newtheoremstyle{stepstyle}{10pt}{5pt}{\em}{0pt}{\em}{:}{5pt}{}
\theoremstyle{stepstyle}
\numberwithin{equation}{section}
\definecolor{darkblue}{rgb}{0,0,0.5}
\newdimen\margin
\def\textno#1&#2\par{
   \margin=\hsize
   \advance\margin by -4\parindent
          \setbox1=\hbox{\sl#1}
   \ifdim\wd1 < \margin
      $$\box1\eqno#2$$
   \else
      \bigbreak
      \hbox to \hsize{\indent$\vcenter{\advance\hsize by -3\parindent
      \it\noindent#1}\hfil#2$}
      \bigbreak
   \fi}
\DeclarePairedDelimiter{\abs}{\lvert}{\rvert}
\begin{document}

\newcommand{\new}[1]{\textcolor{red}{#1}}
\newcommand{\old}[1]{\textcolor{red}{\st{#1}}}
\newcommand{\oldblock}[1]{\begin{tikzpicture}\node[inner sep=0pt, outer sep=0pt] at (0,0) (bb) {\begin{minipage}{\textwidth}\color{red}#1\end{minipage}};\draw[-, red] (bb.north west) -- (bb.south east);\end{tikzpicture}}
\def\COMMENT#1{}
\def\TASK#1{}

\newcommand{\todo}[1]{\begin{center}\color{red}\textbf{to do:} #1 \end{center}}

\def\op{\operatorname}
\def\eps{\varepsilon}

\newcommand{\pr}{\mathbb{P}}
\newcommand{\ex}{\mathbb{E}}

\newcommand{\bN}{\mathbb{N}}
\newcommand{\bZ}{\mathbb{Z}}
\newcommand{\bQ}{\mathbb{Q}}
\newcommand{\bR}{\mathbb{R}}
\newcommand{\bC}{\mathbb{C}}

\newcommand{\bx}{\mathbf{x}}

\newcommand{\cE}{\mathcal{E}}
\newcommand{\cG}{\mathcal{G}}
\newcommand{\cS}{\mathcal{S}}
\newcommand{\cT}{\mathcal{T}}
\newcommand{\cQ}{\mathcal{Q}}
\newcommand{\cP}{\mathcal{P}}
\newcommand{\cA}{\mathcal{A}}

\newcommand{\1}{\mathbbm 1}
\newcommand{\supp}{{\rm supp}}
\newcommand{\diff}{\mathop{}\!\mathrm{d}}

\newcommand\restrict[1]{\raisebox{-.5ex}{$|$}_{#1}}

\newcommand{\Set}[1]{\{#1\}}
\newcommand{\set}[2]{\{#1\,:\;#2\}}

\newcommand{\norm}[1]{\|#1\|}

\newcommand{\AU}[1]{\op{Aut}(#1)}

\begin{abstract} 
Let~$T$ be an oriented tree on~$n$ vertices with maximum degree at most~$e^{o(\sqrt{\log n})}$.
If~$G$ is a digraph on~$n$ vertices with minimum semidegree~$\delta^0(G)\geq(\frac12+o(1))n$, 
then~$G$ contains~$T$ as a spanning tree, as recently shown by Kathapurkar and Montgomery (in fact, they only require maximum degree $o(n/\log n)$).
This generalizes the corresponding result by Komlós, Sárközy and Szemerédi for graphs.
We investigate the natural question how many copies of~$T$ the digraph~$G$ contains.
Our main result states that every such~$G$ contains at least~$|\op{Aut(T)}|^{-1}(\frac12-o(1))^nn!$ copies of~$T$, which is optimal.
This implies the analogous result in the undirected case.
\end{abstract}

\maketitle
\section{Introduction}

One of the cornerstones of extremal combinatorics is Dirac's theorem stating that every graph~$G$ on $n\geq 3$ vertices with minimum degree $\delta(G)\geq \frac{n}{2}$ contains a Hamilton cycle.
On the one hand, the bound on the minimum degree is sharp as there are graphs with minimum degree~$\frac{n-1}{2}$ that do not contain any Hamilton cycle.
On the other hand,
there are numerous results testifying that any graph~$G$ with $\delta(G)\geq\frac{n}{2}$ contains Hamilton cycles in an extremely strong and robust sense.
This includes random sparsifications of $G$~\cite{KLS:14}, colour or conflict restrictions on the edges of $G$~\cite{KLS:17},
and the fact, which is proved by Cuckler and Kahn~\cite{CK:09}, that such graphs contain at least $(\frac12-o(1))^nn!$ Hamilton cycles.
This lower bound is easily seen to be optimal by considering the binomial random graph.
To be more precise, for every labelled graph $H$ on $n$ vertices, $K_n$ contains $n!$ copies of $H$ and $|\AU{H}|^{-1}n!$ unlabelled copies of $H$, where $\AU{H}$ refers to the automorphism group of $H$.
Hence $G(n,p)$ contains $p^{e(H)}|\AU{H}|^{-1}n!$ unlabelled copies of $H$ in expectation and by Markov's inequality, 
there is a graph~$G$ with $\delta(G)\geq (1+o(1))pn$ and at most $(1+o(1))^np^{e(H)}|\AU{H}|^{-1}n!$ subgraphs isomorphic to~$H$.

For all of the above results, it does not really matter whether we ask for a Hamilton cycle or a Hamilton path.
As a Hamilton path is potentially the simplest spanning tree,
one may ask whether the above results are also true for other spanning trees.
In this paper we solve this problem for the counting question even in the case where the trees have vertices of substantially large degree.

\begin{theorem}\label{thm:main_simple}
Suppose~$G$ is a graph on~$n$ vertices with $\delta(G)\geq (\frac12+o(1))n$ and~$T$ is a tree on~$n$ vertices with $\Delta(T)\leq e^{o(\sqrt{\log n})}$.
Then~$G$ contains at least $|\AU{T}|^{-1}(\frac12-o(1))^nn!$ copies of~$T$.
\end{theorem}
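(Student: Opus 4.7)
The plan is to deduce Theorem~\ref{thm:main_simple} directly from the main (directed) result of the paper by passing to the natural digraph associated with~$G$.

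Given~$G$ and~$T$ as in Theorem~\ref{thm:main_simple}, form the digraph~$\vec{G}$ obtained from~$G$ by replacing each edge~$uv$ with the pair of arcs~$(u,v)$ and~$(v,u)$, and fix any orientation~$\vec{T}$ of~$T$. Then $\delta^0(\vec{G})=\delta(G)\geq(\tfrac12+o(1))n$, and the underlying graph of~$\vec{T}$ has maximum degree $\Delta(T)\leq e^{o(\sqrt{\log n})}$, so the hypotheses of the directed main theorem are satisfied for the pair $(\vec{G},\vec{T})$.

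The key elementary observation is that labelled copies of~$T$ in~$G$ correspond bijectively to labelled copies of~$\vec{T}$ in~$\vec{G}$. Indeed, any injection $\phi\colon V(T)\to V(G)$ satisfying $\phi(x)\phi(y)\in E(G)$ for every $xy\in E(T)$ is automatically an injective digraph homomorphism $\vec{T}\to\vec{G}$, because both directions of each edge of~$G$ are present in~$\vec{G}$; conversely, forgetting arc orientations sends a labelled copy of~$\vec{T}$ in~$\vec{G}$ back to a labelled copy of~$T$ in~$G$. Dividing the common labelled count by the appropriate automorphism group gives
\[
\#\{\text{copies of }T\text{ in }G\}\cdot|\AU{T}| \;=\; \#\{\text{copies of }\vec{T}\text{ in }\vec{G}\}\cdot|\AU{\vec{T}}|,
\]
and substituting the lower bound $\#\{\text{copies of }\vec{T}\text{ in }\vec{G}\}\geq |\AU{\vec{T}}|^{-1}(\tfrac12-o(1))^n n!$ from the directed theorem immediately yields the desired bound $|\AU{T}|^{-1}(\tfrac12-o(1))^n n!$ on the number of copies of~$T$ in~$G$.

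Since all the substance lies in the directed main theorem, this reduction is pure bookkeeping and presents no real obstacle. The only point worth highlighting is that the argument must be phrased in terms of labelled copies: only then does the ratio of automorphism group orders cancel automatically, without any need to compare~$|\AU{T}|$ and~$|\AU{\vec{T}}|$ directly.
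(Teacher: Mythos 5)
Your reduction is correct and is essentially the paper's own route: the paper likewise deduces its undirected statements from Theorem~\ref{thm:main_directed} by replacing each edge of~$G$ with two oppositely oriented arcs and orienting~$T$ arbitrarily, and it obtains the explicit $(\frac12-o(1))^n n!$ form from the Cuckler--Kahn bound $h\geq n\log\delta^0$, exactly the ingredient you invoke. Your labelled-copy bookkeeping, which makes the automorphism-group ratio cancel cleanly, is if anything a slightly tidier phrasing of the same argument than the paper's ``ignore the orientations'' step.
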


Observe that the error term in the minimum degree bound on~$G$ cannot be completely omitted in contrast to Dirac's theorem.
Koml\'os, S\'ark\"ozy and Szemer\'edi proved that $\delta(G)\geq (\frac12+o(1))n$ suffices to guarantee any bounded degree tree on~$n$ vertices as a subgraph, confirming a conjecture of Bollob\'as, and they
later proved that under the same condition, $G$ even contains any tree~$T$ on~$n$ vertices with $\Delta(T)\leq o(\frac{n}{\log n})$~\cite{KSS:95, KSS:01}.
Hence our result also strengthens these results for trees $T$ with $\Delta(T)\leq e^{o(\sqrt{\log n})}$.

We note further that there are trees $T$ where $|\op{Aut}(T)|$ grows exponentially in $n$, for example, the balanced binary tree.
However, if $T$ is a path or a cycle, then $|\op{Aut}(T)|\leq 2n$ and 
hence this quantity is smaller than the error term and can be neglected. 

Cuckler and Kahn proved a significantly stronger result than Theorem~\ref{thm:main_simple}.
To state their result, we say 
a function $\bx\colon E(G)\to\bR_{\geq0}$ is an \emph{edge weighting} of $G$ and
write~$\bx_e$ for~$\bx(e)$.
We call an edge weighting a \emph{perfect fractional matching} if $\sum_{w\in N(v)} \textbf{x}_{vw} = 1$ for every vertex $v\in~V(G)$.
Moreover, let $h(G)=\max_\bx \sum_{e\in E(G)}-\bx_e\log \bx_e$ where 
the maximum is taken over all perfect fractional matchings of~$G$.
Here and throughout the paper, we write $\log = \log_2$.

\begin{theorem}[Cuckler and Kahn~{\cite[Theorem~1.2]{CK:09}}]\label{thm:CK}
Suppose~$G$ is a graph on~$n$ vertices with $\delta(G)\geq \frac{n}{2}$.
Then~$G$ contains at least $2^{2h(G)-n\log e-o(n)}$ Hamilton cycles.
\end{theorem}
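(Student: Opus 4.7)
The plan is to use the entropy method. Writing $\mathcal{C}$ for the set of Hamilton cycles of $G$ and letting $C$ be a uniformly random element of $\mathcal{C}$, we have $\log|\mathcal{C}| = H(C)$, the Shannon entropy, so it suffices to show $H(C) \geq 2h(G) - n\log e - o(n)$. Fix an optimal perfect fractional matching $\bx$ with $\sum_e -\bx_e \log \bx_e = h(G)$; this serves as the reference distribution against which the entropy will be measured.

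Unrolling $C$ as a cyclic sequence $v_0, v_1, \ldots, v_{n-1}$ (the choice of starting vertex and orientation costs only $O(\log n)$ bits), the chain rule for entropy gives
\[
H(C) = \sum_{i=1}^{n-1} H(v_i \mid v_0, \ldots, v_{i-1}) + O(\log n).
\]
The heart of the argument is a lower bound on each conditional entropy in terms of $\bx$. I would couple $C$ with an auxiliary $\bx$-weighted process in which, conditional on the path so far, the next vertex is picked from the unvisited neighbours of $v_{i-1}$ with probability proportional to $\bx$. A Bregman--Schrijver-style inequality then provides, on average over $i$,
\[
H(v_i \mid v_0,\ldots,v_{i-1}) \;\geq\; \sum_{w} -\bx_{v_{i-1}w} \log \bx_{v_{i-1}w} \;-\; \log e \;+\; o(1).
\]
Summing over $i$ yields $2h(G) - n \log e - o(n)$: the factor $2$ appears because both endpoints of each edge contribute to the $\bx$-mass of their vertex when summed over all $v_{i-1}$, while the $-n\log e$ correction arises from the normalisation of the conditional distributions, the same Stirling-type correction responsible for $(n-1)!/2 \sim (n/e)^n$.

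The main obstacle is to justify this lower bound at \emph{every} step, in particular once only $o(n)$ unvisited vertices remain and the path has few legal completions. The standard remedy is an absorbing construction: reserve in advance a short flexible set of paths that can incorporate any small leftover set of vertices, so that the entropy accounting is effectively carried out over near-Hamilton paths of length $(1-o(1))n$ whose step-distributions stay close to the $\bx$-weighted ones. A second delicate point is that self-avoidance distorts the $\bx$-weighted transitions; martingale concentration, using $\delta(G) \geq n/2$ to keep the number of available neighbours both large and tightly concentrated around its mean, is needed to show that this distortion contributes only $o(n)$ to the total entropy and does not degrade the Brégman-type bound by more than $o(1)$ per step.
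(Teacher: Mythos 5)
The central step of your argument is not valid as stated. You take $C$ uniform on the set $\mathcal{C}$ of Hamilton cycles, so that $H(C)=\log|\mathcal{C}|$, and then propose to lower bound each conditional entropy $H(v_i\mid v_0,\ldots,v_{i-1})$ of this \emph{uniform} distribution by comparison with an auxiliary $\bx$-weighted process. But the conditional distributions of the uniform random Hamilton cycle are precisely the unknown quantities (their probabilities are ratios of numbers of completions of the partial path), and there is no ``Bregman--Schrijver-style'' inequality that bounds them from below: Bregman's theorem is an \emph{upper} bound on permanents, and Schrijver's lower bound concerns perfect matchings of regular bipartite graphs; neither says anything about the next-step entropy of a uniform Hamilton cycle conditioned on an initial segment. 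Indeed the claimed per-step bound cannot hold history-by-history: there are partial paths with a unique legal continuation, for which the conditional entropy is $0$ while $h_{\bx}(v_{i-1})-\log e$ is of order $\log n$. So the heart of the proof --- the displayed inequality --- is unsupported, and no averaging over $i$ rescues it without an argument that is essentially the whole theorem.

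The known route (Cuckler--Kahn, and the method of this paper) inverts your logic: one does not estimate the entropy of the uniform distribution, but \emph{constructs} a random object whose law one controls --- a random walk (here, a random tree) whose transitions are given by a $b$-normal perfect fractional matching --- so that each step has entropy exactly $h^{\pm}_{\bx}$ of the current vertex, and by mixing (Corollary~\ref{cor: probability}) the current vertex is nearly uniform, giving roughly $\tfrac{2h(\bx)}{n}$ per step. One then conditions on the likely event $\cE$ that the realisation is self-avoiding and well-behaved, losing only $o(n)$ entropy by Lemma~\ref{lemma: entropy}, and uses that the number of realisations is at least $2^{H(\cdot\mid\cE)}$, because entropy never exceeds the log of the support size; completion to a full Hamilton cycle is handled by a reserved absorbing structure, and overcounting is a polynomial factor. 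In this accounting the $n\log e$ does not appear as a per-step normalisation cost but as the depletion cost of the host graph, $\sum_i \log\frac{n}{n_i}\approx\log\frac{n^n}{n!}=n\log e+O(\log n)$, exactly as in the proof of Theorem~\ref{theorem: large tree}. Your ingredients (matching-guided steps, martingale concentration for self-avoidance/well-behavedness, an absorbing reservoir for the last $o(n)$ vertices, the Stirling-type loss) are the right ones, but the direction of the key entropy comparison must be reversed: build a distribution and bound its support, rather than bound conditional entropies of the uniform cycle.
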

In fact, Cuckler and Kahn showed in another paper that the lower bound in Theorem~\ref{thm:CK} is also an upper bound (up to the error term)~\cite{CK:09 upper bound}.

Our main theorem implies the generalisation of Theorem~\ref{thm:CK} to spanning trees.

\begin{theorem}\label{thm:main_undirected}
Suppose~$G$ is a graph on~$n$ vertices with $\delta(G)\geq (\frac12+o(1))n$ and~$T$ is a tree on~$n$ vertices with $\Delta(T)\leq e^{o(\sqrt{\log n})}$.
Then~$G$ contains at least $|\AU{T}|^{-1}2^{2h(G)-n\log e-o(n)}$ copies of~$T$.
\end{theorem}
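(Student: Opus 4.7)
The plan is to derive Theorem \ref{thm:main_undirected} from the directed main theorem of the paper, stated in the following entropy form: every digraph $D$ on $n$ vertices with $\delta^0(D) \geq (\tfrac12 + o(1))n$ contains, for every oriented tree $\vec T$ on $n$ vertices with $\Delta(\vec T) \leq e^{o(\sqrt{\log n})}$, at least $|\AU{\vec T}|^{-1} 2^{H(D) - n\log e - o(n)}$ copies of $\vec T$. Here $H(D)$ denotes the maximum of $\sum_{(u,v) \in A(D)} -\vec{\mathbf{x}}_{(u,v)} \log \vec{\mathbf{x}}_{(u,v)}$ taken over fractional perfect $1$-factors $\vec{\mathbf{x}}$ of $D$, that is, nonnegative arc weightings with $\sum_u \vec{\mathbf{x}}_{(u,v)} = \sum_u \vec{\mathbf{x}}_{(v,u)} = 1$ for every $v$.

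To carry out the reduction, form the digraph $D(G)$ on $V(G)$ by replacing every edge $uv$ of $G$ with the two arcs $(u,v)$ and $(v,u)$. Then $\delta^0(D(G)) = \delta(G) \geq (\tfrac12 + o(1))n$, so $D(G)$ fits the hypothesis of the directed theorem, and any orientation $\vec T$ of $T$ satisfies $\Delta(\vec T) = \Delta(T) \leq e^{o(\sqrt{\log n})}$. Given a perfect fractional matching $\mathbf{x}$ of $G$ attaining $h(G)$, set $\vec{\mathbf{x}}_{(u,v)} = \vec{\mathbf{x}}_{(v,u)} = \mathbf{x}_{uv}$ for each $uv \in E(G)$. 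A direct check shows that $\vec{\mathbf{x}}$ is a fractional perfect $1$-factor of $D(G)$ and that its entropy equals $2\sum_{e \in E(G)} -\mathbf{x}_e \log \mathbf{x}_e = 2h(G)$; in particular, $H(D(G)) \geq 2h(G)$.

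The final step is the counting transfer. Every labelled copy of $T$ in $G$ corresponds to a unique labelled copy of $\vec T$ in $D(G)$ and vice versa, so the number of unlabelled copies of $T$ in $G$ equals $\tfrac{|\AU{\vec T}|}{|\AU{T}|}$ times the number of unlabelled copies of $\vec T$ in $D(G)$. Applying the directed main theorem to $(D(G), \vec T)$ gives at least $|\AU{\vec T}|^{-1} 2^{H(D(G)) - n\log e - o(n)} \geq |\AU{\vec T}|^{-1} 2^{2h(G) - n\log e - o(n)}$ copies of $\vec T$ in $D(G)$, and the transfer yields at least $|\AU{T}|^{-1} 2^{2h(G) - n\log e - o(n)}$ copies of $T$ in $G$, as required.

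The main obstacle is not this reduction but the directed main theorem itself: it must be established in the entropy form above, which demands a Br\'egman-/Kahn--Lov\'asz-type entropy argument woven through the embedding procedure, rather than only the cruder counting that produces the $(\tfrac12 - o(1))^n n!$ bound of Theorem \ref{thm:main_simple}. In particular, the embedding algorithm must, at each step, select the image of the current vertex according to a distribution tied to the entropy-maximising fractional $1$-factor of $D$, so that standard entropy inequalities (e.g.\ an appropriate chain rule combined with a Shearer-/Br\'egman-style bound) produce the term $H(D)$ in the exponent. Once this quantitative strengthening of the directed existence result is in hand, the reduction described above is essentially bookkeeping.
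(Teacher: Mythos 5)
Your proposal is correct and follows essentially the same route as the paper's own proof of Theorem~\ref{thm:main_undirected}: the paper likewise forms the digraph obtained by replacing each edge of~$G$ with two oppositely oriented arcs, orients~$T$ arbitrarily, observes $h(D)\geq 2h(G)$ via the doubled perfect fractional matching, and invokes Theorem~\ref{thm:main_directed} (which the paper proves in exactly the entropy form you require). Your automorphism bookkeeping in the counting transfer is, if anything, slightly more careful than the paper's one-line version of that step.
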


Note that $h(G)\geq \frac{n}{2}\log \delta(G)$ for any graph~$G$ on~$n$ vertices with $\delta(G)\geq \frac{n}{2}$~\cite[Theorem~1.3]{CK:09}
and hence Theorem~\ref{thm:main_undirected} implies Theorem~\ref{thm:main_simple}.

Clearly, the lower bound given in Theorems~\ref{thm:main_simple} and~\ref{thm:main_undirected} cannot be improved
as we may choose~$T$ be to be Hamilton path.
However, note that Theorems~\ref{thm:main_simple} and~\ref{thm:main_undirected} are tight for \emph{every} such tree.

\medskip

Recently, Kathapurkar and Montgomery transferred the above-mentioned theorem due to Koml\'os, S\'ark\"ozy and Szemer\'edi to digraphs thereby implying their result and
extending earlier works of Mycroft and Naia~\cite{MN:20}.
To state their theorem,
let~$\delta^0(G)$ be the \emph{minimum semidegree} of a digraph~$G$
which is the maximum~$k$ such that every vertex has at least~$k$ in-neighbours and~$k$ out-neighbours
and let~$\Delta(G)$ be the maximum degree of the underlying undirected graph (counting parallel edges).

\begin{theorem}[Kathapurkar and Montgomery {\cite[Theorem~1.1]{KM:22}}]
Suppose $G$ is a digraph on~$n$ vertices with $\delta^0(G)\geq (\frac12+o(1))n$ 
and~$T$ is an oriented tree on~$n$ vertices with $\Delta(T)\leq o(\frac{n}{\log n})$.
Then~$G$ contains a copy of~$T$.
\end{theorem}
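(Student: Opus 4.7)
The plan is to follow the regularity-plus-absorption paradigm, adapting the Koml\'os--S\'ark\"ozy--Szemer\'edi method to the directed setting. First I would apply the directed version of Szemer\'edi's Regularity Lemma to partition $V(G)=V_0\cup V_1\cup\cdots\cup V_k$ into an exceptional set $V_0$ of small size and clusters $V_1,\dots,V_k$ of equal size, with almost all pairs $(V_i,V_j)$ forming $\eps$-regular pairs with respect to both orientations. The reduced digraph $R$, whose arcs join clusters with sufficiently large density in each direction, inherits the minimum semidegree condition $\delta^0(R)\geq(\frac12+o(1))|R|$.

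Next I would extract from $R$ a spanning structure suitable to host $T$. By the Ghouila-Houri theorem, $R$ contains a directed Hamilton cycle $C$; by a cleaning step one can upgrade a constant fraction of the consecutive pairs of $C$ to bidirectionally super-regular blocks, and keep in reserve enough auxiliary arcs that can route either in-arcs or out-arcs of $T$ as needed. One also sets aside, in advance, an absorbing set $A\subseteq V(G)$ designed so that any small deviation in how cluster capacities are consumed (and in particular the vertices of $V_0$) can be swallowed via pre-arranged flexible copies of rooted subtrees of $T$.

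The oriented tree $T$ must then be decomposed. For $\Delta(T)\leq o(n/\log n)$ I would chop $T$ into a short backbone together with many small subtrees of size roughly $\log^2 n$ hanging off it, so that each small subtree has low aggregated degree. The backbone is embedded greedily and randomly along $C$, respecting arc orientations; the small subtrees are then inserted into consecutive cluster blocks using a directed blow-up lemma (placing each piece into the two clusters rooted at the image of its attachment vertex on the backbone); finally the absorber is activated to complete the embedding on $V_0$ and any leftover vertices.

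The principal obstacle is the combination of high-degree vertices in $T$ with the orientation constraint. A vertex $v\in V(T)$ of degree close to $n/\log n$ can only be embedded into a cluster where it retains a large proportion of \emph{each} cluster that contains an in- or out-neighbour of $v$; since this property must hold \emph{before} those neighbours are themselves embedded, one has to maintain lists of candidate images for the high-degree vertices throughout the process and show via martingale concentration (Azuma/Freedman-type bounds applied to the random backbone embedding) that these lists remain super-polynomially large. Balancing the decomposition (long enough backbone to connect the pieces, small enough pieces to embed via a blow-up, absorber flexible enough to swallow any imbalance) against the logarithmic slack granted by $\Delta(T)\leq o(n/\log n)$ is the heart of the proof, and is what separates this theorem from earlier digraph embedding results that required bounded maximum degree.
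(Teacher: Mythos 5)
This statement is not proved in the paper at all: it is quoted verbatim from Kathapurkar and Montgomery \cite[Theorem~1.1]{KM:22}, and the paper only uses their machinery (notably their absorption result, restated here as Theorem~\ref{theorem: absorption}) as a black box. So the relevant comparison is with the cited literature, and on its own terms your proposal is a strategy outline rather than a proof, with the genuinely hard steps assumed rather than carried out.

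Concretely, the gap is that every step that distinguishes this theorem from a routine regularity--blow-up argument is only gestured at. A (directed) blow-up lemma embeds pieces of \emph{bounded} maximum degree; with $\Delta(T)$ allowed to grow like $o(n/\log n)$, a single vertex of $T$ can have more neighbours than there are clusters, so its in- and out-neighbours necessarily spread over a positive fraction of \emph{all} clusters, and the condition you impose --- that its image ``retains a large proportion of each cluster containing a neighbour'', maintained before those neighbours are embedded --- becomes a global constraint for which you give no mechanism; saying that Azuma-type concentration keeps candidate lists ``super-polynomially large'' is precisely the missing argument, not a proof of it. (Already in the undirected setting, passing from bounded degree \cite{KSS:95} to $\Delta(T)\leq o(n/\log n)$ \cite{KSS:01} required a substantially different embedding scheme, so one cannot treat this as a perturbation of the bounded-degree case.) Likewise, the absorbing set you ``set aside in advance'', able to swallow $V_0$ and arbitrary leftover imbalances while completing prescribed rooted subtrees of $T$ with a prescribed root image, is itself a major theorem --- essentially Theorem~\ref{theorem: absorption} of this paper, i.e.\ Theorem~2.1 of \cite{KM:22} --- and cannot be invoked as a routine preprocessing step inside a proof of the very result it underpins. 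Until the high-degree embedding argument and the absorber construction are actually supplied, the proposal does not establish the theorem.
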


We also generalise their theorem up to a more restrictive bound on the maximum degree.
For a digraph~$G$ with vertex set $\{v_1,\ldots,v_n\}$,
let~$B_G$ the bipartite graph corresponding to~$G$,
that is, the vertex set of~$B_G$ is 
$\{v_1^+,v_1^-,\ldots,v_n^+,v_n^-\}$ and the edge set is $\{v_i^+v_j^- \mid v_iv_j\in E(G)\}$.
We define $h(G):=h(B_G)$ to be the entropy of~$G$.
The following theorem is our main result.

\begin{theorem}\label{thm:main_directed}
Suppose~$G$ is a digraph on~$n$ vertices with $\delta^0(G)\geq (\frac12+o(1))n$ and~$T$ is an oriented tree on~$n$ vertices with $\Delta(T)\leq e^{o(\sqrt{\log n})}$.
Then~$G$ contains at least $|\AU{T}|^{-1}2^{h(G)-n\log e-o(n)}$ copies of~$T$.
\end{theorem}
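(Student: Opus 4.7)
The plan is to follow the blueprint of Cuckler--Kahn's entropy argument, adapting it from Hamilton cycles to general spanning oriented trees, and to combine it with the tree--embedding machinery behind Kathapurkar--Montgomery. First I would pass from the digraph $G$ to its bipartite incidence graph $B_G$, so that counting oriented copies of $T$ in $G$ reduces to counting labelled embeddings $\phi\colon V(T)\to V(G)$ whose in/out halves sit in $B_G$. Let $\bx$ be a perfect fractional matching of $B_G$ achieving $h(G)=h(B_G)$; the embedding process will be designed to track $\bx$, so that the total log-number of choices is $h(G)-n\log e-o(n)$, matching the target up to the $|\AU{T}|^{-1}$ factor from passing from labelled embeddings to subgraph copies.

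Second, I would apply the digraph regularity lemma to obtain a reduced digraph $R$ with super-regular $\eps$-pairs and densities close to $\bx$. A small fraction of each cluster is reserved for an \emph{absorbing structure}, built in the style of R\"odl--Ruci\'nski--Szemer\'edi but adapted to oriented trees: the absorber should accept any ``leftover'' tree of bounded maximum degree and controlled size as a completing subgraph, regardless of its precise shape. Parallel to this, $T$ is partitioned, along the lines of \cite{KM:22}, into a bounded-size skeleton $S$ together with small ``pendant'' subtrees hanging off $S$, where high-degree vertices of $T$ are pushed into $S$.

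Third comes the counting itself. The skeleton $S$ is embedded first into specially reserved super-regular clusters, accounting only for a $2^{o(n)}$ factor because $|S|=o(n)$ and $\Delta(T)\le e^{o(\sqrt{\log n})}$. The pendant subtrees are then embedded one vertex at a time, greedily, by choosing each image proportionally to the $\bx$-weight of the candidate edge in the relevant super-regular pair. Writing $X_i$ for the number of admissible choices at step $i$, a martingale/Azuma concentration argument (essentially as in Cuckler--Kahn) shows $X_i\ge 2^{H_i-o(1)}$ for all but $o(n)$ indices, where $H_i$ is the conditional entropy of the $i$-th image given the embedding so far; telescoping gives $\prod_i X_i\ge 2^{h(G)-n\log e-o(n)}$. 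Finally the absorber closes up any mismatch, and dividing by $|\AU{T}|$ converts labelled embeddings to unlabelled copies.

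The hardest part, I expect, will be controlling the contribution of tree-vertices with degree as large as $e^{o(\sqrt{\log n})}$: each such vertex simultaneously constrains many edges of the embedding, and a naive argument loses a factor $\log\Delta(T)$ per high-degree vertex, which could overwhelm the $o(n)$ budget. The Kathapurkar--Montgomery decomposition must therefore be refined so that essentially all high-degree vertices lie in the skeleton embedded into super-regular reservoirs, while the greedy entropic stage only sees vertices of constant (or at worst poly-logarithmic) degree, whose combined loss lies safely inside the $o(n)$ error. The second subtle point is designing an absorbing gadget that works uniformly across all oriented trees $T$ satisfying the degree assumption; unlike in the Hamilton-cycle case there is no canonical ``rotation--extension'' tool, so the absorber must be built from scratch using super-regular oriented bipartite pieces and a rainbow matching / switching argument to guarantee flexibility.
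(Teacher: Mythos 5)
Your plan diverges from the paper's: the paper uses no regularity lemma at all, but instead works directly with a $b$-normal perfect fractional matching of near-maximal entropy (Theorem~\ref{theorem: almost maximum matching}), embeds $T$ chunk-by-chunk as a branching random walk (``random tree'') driven by that matching, and invokes the Kathapurkar--Montgomery absorption theorem (Theorem~\ref{theorem: absorption}) as a black box for the final piece. Measured against that, your proposal has two genuine gaps. First, your absorber. You propose to build, from super-regular pieces, a gadget that ``accepts any leftover tree of bounded maximum degree and controlled size, regardless of its precise shape''. That is essentially the main technical content of \cite{KM:22} (and in fact stronger than needed: in the paper the absorbed tree $T''$ is a \emph{fixed} subtree of $T$, and only the leftover \emph{vertex set} is arbitrary), and you give no construction --- there is no rotation--extension analogue for trees, as you yourself note. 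Either you cite \cite[Theorem~2.1]{KM:22}, as the paper does, or this step is an unproved theorem sitting inside your proof.

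Second, and more seriously, the counting step is asserted rather than argued. Writing $X_i\ge 2^{H_i-o(1)}$ and ``telescoping'' does not by itself produce $2^{h(G)-n\log e-o(n)}$: (a) to make ``choose the next image proportionally to $\bx$'' both well defined and concentrated you need vertex-level control on $\bx$ (every edge weight $\asymp 1/n$); the optimal matching need not have this, and repairing it while losing only $\eps n$ entropy is exactly Theorem~\ref{theorem: almost maximum matching}, which regularity densities do not give you. (b) After each embedded chunk the host graph shrinks, and you must exhibit on the \emph{remaining} graph a normal fractional matching whose entropy has dropped only by the proportional amount plus $n_i\log\frac{n_{i-1}}{n_i}$; this is the paper's Lemma~\ref{lemma: new matching} and Lemma~\ref{lemma: iteration basis}, requires the embedded chunk to be ``well-behaved'' with respect to all neighbourhoods (an Azuma argument over the tree, Lemma~\ref{lemma: neighbours tree}), and conditioning on that event costs entropy only because of normality (Lemma~\ref{lemma: entropy}). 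Nothing in your sketch maintains these invariants over $\Theta(n^{3/4})$ rounds. (c) The term $n\log e$ is not a mystery constant: it arises precisely from summing $\sum_i m_i\log\frac{n_0}{n_i}\approx \log\frac{n^n}{n!}$ over the shrinking host graphs (Theorem~\ref{theorem: large tree}); your outline contains no mechanism producing it, so the claimed bound is not actually derived. Finally, your worry about high-degree vertices is misplaced: in the entropic accounting each tree vertex contributes the conditional entropy of a single edge-choice regardless of its parent's degree, so no per-branching loss of $\log\Delta(T)$ occurs and no skeleton segregating high-degree vertices is needed; the degree bound $e^{o(\sqrt{\log n})}$ enters only through the martingale差 — through the Azuma concentration for well-behavedness and the tree-partition parameters.
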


Again, the bound on the number of copies of~$T$ is optimal up to the error term.
Observe that Theorem~\ref{thm:main_directed} implies Theorem~\ref{thm:main_undirected}
by replacing any edge in the undirected graph by two antidirected parallel edges and orienting~$T$ arbitrarily.

Note that $h(G)\geq n\log \delta^0(G)$ whenever $\delta^0(G)\geq \frac{n}{2}$~\cite[Theorem~3.1]{CK:09}. This yields the more explicit lower bound of $|\op{Aut}(T)|^{-1}(\frac12-o(1))^nn!$ copies of~$T$ in~$G$.

Our paper is organised as follows.
In Section~\ref{sec:sketch},
we give an outline of our proof method.
In Section~\ref{sec:Preliminaries},
we introduce several basic concepts.
In Section~\ref{sec:Normal perfect fractional matchings with large entropy},
we prove that there are perfect fractional matchings that almost yield the maximal entropy and are particularly well distributed.
In Section~\ref{sec:Conditions for a random tree being well-behaved},
random trees are considered that yield the desired partial embedding of our tree
and in Section~\ref{sec:Counting trees},
we prove our main theorem.

\section{Proof Sketch}\label{sec:sketch}

We explain our idea for the proof of Theorem~\ref{thm:main_directed} by explaining the proof idea of Theorem~\ref{thm:main_undirected} and outlining the additional complications when transitioning from graphs to digraphs later.
So suppose we are in the setting of Theorem~\ref{thm:main_undirected}, 
that is, we are given a graph~$G$ on~$n$ vertices with $\delta(G)\geq (\frac12+o(1))n$ and a tree~$T$ on~$n$ vertices with an appropriate bound on~$\Delta(T)$.
We first partition~$T$ into a collection of subtrees $T',T_1,\ldots,T_k$ such that $T'\cup T_1 \cup \ldots \cup T_i$ is a tree for all~$i$
and~$T'$ is only adjacent to~$T_1$.
Now we proceed in an inductive manner and
show that there are suitably many embeddings of~$T_{i+1}$ given we already fixed any embedding of $T_1 \cup \ldots \cup T_i$ in~$G$.
At the end we embed~$T'$ and complete the embedding.
This is possible by, before embedding $T_1,\ldots,T_k$, taking aside a small vertex set with particular properties which ensures that we can always embed~$T'$ (for this last step we utilise a result due to Kathapurkar and Montgomery).

In order to show one step of our induction,
we embed~$T_{i+1}$ randomly.
To be more precise, we interpret the weights of a perfect fractional matching of~$G$ as transition probabilities of a random walk-like structure.
If~$T_{i+1}$ is a path, we simply consider a random walk in $G-(T_1\cup \ldots \cup T_i)$,
otherwise, the random walk branches suitably according to the structure of~$T_{i+1}$.
The entropy of this random embedding is linked to the entropy of~$G$ and thus we obtain a lower bound on the number of realisations of the random embedding of~$T_{i+1}$ with the little caveat that we want to condition on the event that the realisations have random-like properties (this is what we call \emph{well-behaved}).
As we show in Lemma~\ref{lemma: entropy}, the entropy decreases only slightly by conditioning on a likely event.

The sizes of the trees~$T_{i+1}$ have to be chosen large enough that one can apply concentration inequalities in order to verify that most realisations of random embeddings are in fact well-behaved and 
the sizes of the~$T_{i+1}$ have to be small enough in order to be able to exploit that at every step of the random embedding we can still use essentially all vertices in $G-(T_1\cup \ldots \cup T_i)$.

On a high level, Cuckler and Kahn already followed this path and 
hence our proof shares ideas with their proof.
However, focusing only on cycles simplifies the arguments at several places considerably and new ideas are needed.

In order to move from graphs to digraphs,
we mainly have to adapt the definition of a random tree, as the direction of the edge between two in~$T$ adjacent vertices determines whether we use an in-neighbour or an out-neighbour of the image of the previously embedded vertex as the image of its neighbour.

\section{Preliminaries}\label{sec:Preliminaries}
\subsection{Notation}

For an integer~$n\geq 1$, let $[n]:=\{1,\ldots,n\}$ and $[n]_0:=[n]\cup \{0\}$.
For real numbers~$\alpha,\beta,\delta$, we write $\alpha=(1\pm\delta)\beta$ if $(1-\delta)\beta\leq\alpha\leq(1+\delta)\beta$.
If we write $\alpha\ll\beta$, where $\alpha,\beta\in(0,1]$, there is a non-decreasing function $\alpha_0\colon(0,1]\to(0,1]$ such that for any $\beta\in(0,1]$, the subsequent statement holds for $\alpha\in(0,\alpha_0(\beta)]$.
Hierarchies with more constants are defined in a similar way and should be read from right to left.
If~$\frac{1}{n}$ appears in a hierarchy, this implicitly means that~$n$ is a natural number.
We ignore rounding issues when they do not affect the argument.

We write~$\log:=\log_2$ and $\ln:=\log_e$.
For $x\geq0$, we often use the inequality $\log (1+x) \leq 2x$ that follows immediately from $1+x \leq e^x \leq 2^{2x}$.

\subsection{Entropy}
Let~$X$ be a discrete random variable with possible outcomes~$x_1,\ldots,x_N$.
Let~$p_i:=\pr[X=x_i]$.
The \emph{entropy} of~$X$ is defined as
\begin{equation*}
	H(X) := -\sum_{i=1}^N p_i\log p_i = \sum_{i=1}^N p_i\log\frac{1}{p_i},
\end{equation*}
where we set $0\log0:=0$. If~$\cE$ is an event with nonzero probability and $p_i':=\pr[X=x_i\mid \cE]$, we define by $H(X\mid\cE) := -\sum_{i=1}^N p_i' \log p_i'$
the \emph{conditional entropy of~$X$ given~$\cE$}.

We define $\op{supp} X := \{i\in[N]\mid p_i\neq0\}$ and $\op{supp}(X\mid\cE) := \{i\in[N]\mid p_i'\neq0\}$.
If~$Y$ is a discrete random variable with possible outcomes $y_1,\ldots,y_M$, we define by $H(X\mid Y) := \sum_{i\in\op{supp} Y}\pr[Y=y_i]H(X\mid Y=y_i)$ the \emph{conditional entropy of~$X$ given~$Y$}.

By Jensen's inequality the entropy of a random variable is maximised by the uniform distribution, that is, $H(X) \leq \log |\op{supp} X|$ and $H(X\mid\cE) \leq \log|\op{supp}(X\mid \cE)|$. In particular, $H(X)\leq \log N$.

The following lemma shows that~$H(X\mid\cE)$ cannot be much smaller than~$H(X)$ if~$\cE$ is an event that is very likely and if~$p_i$ is not too small for all $i\in [N]$.
\begin{lemma}\label{lemma: entropy}
	Let~$X$ be a random variable with values in $\{x_1,\ldots,x_N\}$.
	Let $A\geq16$ be such that $\pr[X=x_i]\geq\frac{1}{A}$ for all $i\in[N]$.
    Let $J\subseteq\{x_1,\ldots,x_N\}$ and let~$\cE$ be the event that $X\in J$.
	Suppose that $\pr[\cE] \geq 1-a$ for some $0\leq a\leq \frac12$. Then
	\begin{equation*}
		H(X) - H(X\mid\cE) \leq 2a\log A.
	\end{equation*}
\end{lemma}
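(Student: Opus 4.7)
The plan is to expand both entropies explicitly from their definitions, subtract, and then bound the resulting error terms using the two hypotheses: the pointwise lower bound $p_i \geq 1/A$ (which controls $-\log p_i$) and the smallness of $1-\pr[\cE]$.

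First, I would set $p_i := \pr[X=x_i]$, $p_i' := \pr[X=x_i\mid\cE]$, and $q := \pr[\cE]$, so that $p_i' = p_i/q$ for $x_i\in J$ and $p_i'=0$ otherwise. Substituting into the definition and separating the $\log q$ factor from the $\log p_i$ factor yields
\begin{equation*}
H(X\mid\cE) \;=\; -\frac{1}{q}\sum_{x_i\in J} p_i\log p_i \;+\; \log q.
\end{equation*}
Subtracting this from $H(X) = -\sum_i p_i\log p_i$ and regrouping gives the clean decomposition
\begin{equation*}
H(X)-H(X\mid\cE) \;=\; \underbrace{-\!\!\sum_{x_i\notin J}\!p_i\log p_i}_{(\mathrm{I})} \;+\; \underbrace{\frac{1-q}{q}\sum_{x_i\in J}p_i\log p_i}_{(\mathrm{II})} \;-\; \underbrace{\log q}_{(\mathrm{III})}.
\end{equation*}

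The three terms are now bounded separately. For (I), since $p_i\geq 1/A$ we have $-\log p_i\leq \log A$, so $-p_i\log p_i\leq p_i\log A$; summing over $x_i\notin J$ yields $(\mathrm{I})\leq (1-q)\log A\leq a\log A$. For (II), every summand $p_i\log p_i$ is nonpositive (as $p_i\leq 1$) and $1-q\geq 0$, so $(\mathrm{II})\leq 0$. For (III), using $a\leq 1/2$ we have $\tfrac{1}{1-a}\leq 1+2a$, so by the paper's inequality $\log(1+x)\leq 2x$,
\begin{equation*}
-\log q \;=\; \log\tfrac{1}{1-a} \;\leq\; \log(1+2a) \;\leq\; 4a.
\end{equation*}
Combining, $H(X)-H(X\mid\cE)\leq a\log A+4a$. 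Since $A\geq 16$ gives $\log A\geq 4$, we have $4a\leq a\log A$, hence the desired bound $2a\log A$.

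There is no real obstacle in this argument, just careful sign bookkeeping; the only subtle point is noticing that term (II) is automatically nonpositive and hence can be discarded from the upper bound, and that the hypothesis $A\geq 16$ is precisely what is needed to absorb the additive $4a$ coming from term (III) into a second copy of $a\log A$.
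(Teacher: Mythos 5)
Your proof is correct and follows essentially the same route as the paper: split the difference $H(X)-H(X\mid\cE)$ according to $J$, bound the outside-$J$ contribution by $a\log A$ via $p_i\geq\frac1A$, bound the conditioning correction by $\log\frac{1}{1-a}\leq 4a$, and use $A\geq16$ to absorb the $4a$ into $a\log A$; your exact extraction of the $\log q$ term and the observation that term (II) is nonpositive is just a slightly different bookkeeping of the paper's pointwise estimate $p_i\leq p_i'\leq \frac{p_i}{1-a}$. One cosmetic slip: $-\log q=\log\frac{1}{1-a}$ should be $-\log q\leq\log\frac{1}{1-a}$ (since $q\geq 1-a$), which does not affect the argument.
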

\begin{proof}
	We define $p_i:=\pr[X=x_i]$ and $p_i':=\pr[X=x_i\mid \cE]$ as well as~$I:=\op{supp}(X\mid\cE)=\{i\in[N]\mid x_i\in J\}$. Note that~$p_i'=0$ if~$i\not\in I$ and~$p_i\leq p_i' = \frac{p_i}{\pr[\cE]} \leq \frac{p_i}{1-a}$ if~$i\in I$. Thus, $p_i'\log p_i'\leq p_i\log p_i'\leq p_i\log\frac{p_i}{1-a}$ for $i\in I$. In particular,
    \begin{align*}
        -\sum_{i\in I} p_i\log p_i + \sum_{i\in I} p_i'\log p_i'
        \leq -\sum_{i\in I} p_i\log p_i + \sum_{i\in I} p_i\log\frac{p_i}{1-a}
        = \log\frac{1}{1-a}\cdot\sum_{i\in I}p_i \leq \log\frac{1}{1-a}.
    \end{align*}
    Combining this with $-\sum_{i\in [N]\setminus I}p_i\log p_i \leq \log A\cdot\sum_{i\in[N]\setminus I} p_i \leq a\log A$, we obtain
	\begin{align*}
		H(X) - H(X\mid\cE)
		&= -\sum_{i\in[N]} p_i\log p_i + \sum_{i\in [N]} p_i'\log p_i' \\
		&= -\sum_{i\in I} p_i\log p_i + \sum_{i\in I} p_i'\log p_i' - \sum_{i\in[N]\setminus I} p_i\log p_i \\
		&\leq \log \frac{1}{1-a} + a\log A.
	\end{align*}
	Using $\log\frac{1}{1-a}\leq \log(1+2a) \leq 4a \leq a\log A$ yields the desired result.
\end{proof}

\subsection{Graphs}
Let~$G$ be a graph.
By~$V(G)$ we denote the vertex set and by~$E(G)$ the edge set of~$G$, respectively.
The \emph{order} of~$G$ is the number of its vertices (sometimes we simply write~$\abs{G}$ instead of~$\abs{V(G)}$).
The \emph{size} of~$G$ is the number of its edges.
For an edge~$\{v,w\}\in E(G)$ we simply write~$vw$ or~$wv$.
The \emph{neighbourhood} of a vertex $v\in V(G)$ is defined as $N(v):=N_G(v):=\{w\in V(G)\mid vw\in E(G)\}$.
The \emph{degree} of a vertex $v\in V(G)$ is $\deg(v):=|N(v)|$.
If $S\subseteq V(G)$ and $v\in V(G)$, we denote $N_{G,S}(v):=N_G(v)\cap S$.
For a subgraph~$H$ of~$G$, we define $N_{G,H}(v):=N_{G,V(H)}(v)$.
We write~$\delta(G)$ for the minimum degree and~$\Delta(G)$ for the maximum degree of a graph~$G$, respectively.
An~\emph{\mbox{$(n,\eps)$-graph}} is a graph~$G$ on~$n$ vertices with~$\delta(G)\geq(\frac12+\eps)n$.

If~$G$ is a graph and $S\subseteq V(G)$, we write~$G[S]$ for the subgraph induced by~$S$ in~$G$, that is, $G[S]$ is the graph with vertex set $S$ and edge set $\{vw\in E(G)\mid v,w\in S\}$. For a vertex set $S\subseteq V(G)$ and induced subgraphs $H,H'\subseteq G$, we define $G-S:=G[V(G)\setminus S]$, $G-H:=G-V(H)$, as well as $H\cup S:=G[V(H)\cup S]$ and $H\cup H':=G[V(H)\cup V(H')]$.

Let~$G,H$ be graphs and $\varphi\colon V(G)\to V(H)$.
We call~$\varphi$ an \emph{isomorphism}
if~$\varphi$ is bijective and $vw\in E(G)$ if and only if $\varphi(v)\varphi(w)\in E(H)$.
We say that~$G$ and~$H$ are \emph{isomorphic}
if there exists an isomorphism between~$G$ and~$H$.
If~$G,H$ are graphs and~$H'$ is a subgraph of~$G$ which is isomorphic to~$H$, we call~$H'$ a \emph{copy} of~$H$ in~$G$.
We call an isomorphism $\varphi\colon V(G)\to V(G)$ an \emph{automorphism} of~$G$.
The automorphisms of a graph~$G$ form a group~$\op{Aut}(G)$, the \emph{automorphism group} of~$G$.

\subsection{Digraphs}
Let~$G$ be a digraph. For an edge $(v,w)\in E(G)$, we simply write~$vw$.
We denote the \emph{out-neighbourhood} of a vertex $v\in V(G)$ as $N^+(v):=N^+_G(v):=\{w\in V(G)\mid vw\in E(G)\}$ and the \emph{out-degree} of~$v$ by $\deg^+(v):=|N^+(v)|$.
Analogously, we define the \emph{in-neighbourhood} as $N^-(v):=N^-_G(v):=\{w\in V(G)\mid wv\in E(G)\}$ and the \emph{in-degree} of~$v$ by $\deg^-(v):=|N^-(v)|$.
By $\delta^+(G)$ we denote the \emph{minimum out-degree} and by $\delta^-(G)$ the \emph{minimum in-degree} of~$G$, respectively.
By $\delta^0(G):=\min_{v\in V(G)}\{\deg^+(v),\deg^-(v)\}$ we denote the \emph{minimum semidegree} of~$G$.
An \mbox{$(n,\eps)$-digraph} is a digraph~$G$ on~$n$ vertices with $\delta^0(G)\geq(\frac12+\eps)n$.
The other notions defined for graphs are transferred easily to digraphs.

Note that any digraph~$G$ with vertex set $V(G):=\{v_1,\ldots,v_n\}$ corresponds to a balanced bipartite graph~$B_G$ with vertex set $V(B_G):=\{v_1^+,v_1^-,\ldots,v_n^+,v_n^-\}$ and edge set $E(B_G) := \{v_i^+v_j^- \mid v_iv_j\in E(G)\}$.

\subsection{Edge weightings and entropy of edge weightings}
Let~$G$ be a (di)graph on~$n$ vertices.
A function $\textbf{x}\colon E(G)\to\bR_{\geq0}$ is called an \emph{edge weighting}.
We write~$\textbf{x}_e$ for~$\textbf{x}(e)$.
An edge weighting is \mbox{\emph{$b$-normal}} if $\frac{1}{bn} \leq \textbf{x}_e \leq \frac{b}{n}$ for each $e\in E(G)$.
We call an edge weighting a \emph{perfect fractional matching} if $\sum_{w\in N(v)} \textbf{x}_{vw} = 1$ for every vertex $v\in~V(G)$ in case~$G$ is a graph and if $\sum_{w\in N^+(v)}\textbf{x}_{vw}=1=\sum_{w\in N^-(v)}\textbf{x}_{wv}$ for every vertex $v\in~V(G)$ in case~$G$ is a digraph (in this case, $\textbf{x}$, extended to~$B_G$ in the obvious way, is a perfect fractional matching of~$B_G$).
Note that any perfect fractional matching~\textbf{x} of a graph~$G$ on~$n$ vertices satisfies $\sum_{e\in E(G)}\textbf{x}_e = \frac{n}{2}$.

Let~$\textbf{x}$ be an edge weighting of a (di)graph~$G$.
If~$M\subseteq E(G)$, we define by
\begin{equation*}
	h_{\textbf{x}}(M) := \sum_{e\in M}\textbf{x}_e\log\frac{1}{\textbf{x}_e}
\end{equation*}
the \emph{entropy of~$M$}. We define the entropy of~$\textbf{x}$ by~$h(\textbf{x}) := h_{\textbf{x}}(E(G))$ and the entropy of~$G$ by~$h(G):=\max\{h(\textbf{x}) \mid \text{$\textbf{x}$ is a perfect fractional matching of~$G$}\}$.
Note that any perfect fractional matching~$\textbf{x}$ of a graph~$G$ on~$n$ vertices satisfies $h(\textbf{x})\leq \frac{n}{2}\log n$ by Jensen's inequality, and $h(\textbf{x})\leq n\log n$ in case~$G$ is a digraph. Note further that any $b$-normal perfect fractional matching~$\textbf{x}$ satisfies $h(\textbf{x}) \geq \frac{n}{2}\log\frac{n}{b}$ in case~$G$ is a graph and $h(\textbf{x})\geq n\log\frac{n}{b}$ in case~$G$ is a digraph.

For $v\in V(G)$ and a perfect fractional matching~$\textbf{x}$, we define by
\[
    h_{\textbf{x}}(v) := \sum_{w\in N(v)} \textbf{x}_{vw}\log\frac{1}{\textbf{x}_{vw}}
\]
the \emph{entropy of $v$} and by~$h_{\textbf{x}}^+(v)$ and~$h_{\textbf{x}}^-(v)$, respectively, the analogous notions for digraphs.

\subsection{Markov chains and random walks}
For a set~$S=\{s_1,\ldots,s_n\}$ and real matrices~$P^{(t)}=(p^{(t)}_{ij})_{i,j\in[n]}$, we say that a sequence of random variables~$Z=(Z_t)_{t\in\bN_0}$ is a~\emph{Markov chain} with \emph{state space}~$S$ and \emph{transition matrix}~$P^{(t)}$ at time~$t$ if
\begin{equation*}
	\pr[Z_{t+1}=s_{i_{t+1}}\mid Z_0=s_{i_0},\ldots,Z_t=s_{i_{t}}]=\pr[Z_{t+1}=s_{i_{t+1}}\mid Z_{t}=s_{i_{t}}]=p^{(t)}_{i_ti_{t+1}}
\end{equation*}
holds for all~$t\in\bN_0$ and~$i_0,\ldots,i_{t+1}\in [n]$.

Let~$G$ be a graph and~$\textbf{x}$ a perfect fractional matching of~$G$. A \emph{random walk on~$G$ induced by~$\textbf{x}$} is the Markov chain~$(Z_t)_{t\in\bN_0}$ with state space~$V(G)$ and transition matrix~$(p_{vw})_{v,w\in V(G)}$, where~$p_{vw}=\pr[Z_{i+1}=w\mid Z_i=v]$ for any~$i\in\bN_0$ is defined by~$p_{vw}:=\textbf{x}_{vw}$ if~$vw\in E(G)$ and $p_{vw}:=0$ if $vw\not\in E(G)$.

Let~$G$ be a digraph, let~$\textbf{x}$ be a perfect fractional matching of~$G$ and let~$L=(y_t)_{t\in\bN_0}$ be an oriented path (which is not a subgraph of~$G$).
A \emph{random walk on~$G$ induced by~$\textbf{x}$ following the pattern of~$L$} is the
Markov chain $(Z_t)_{t\in\bN_0}$ with state space~$V(G)$ and transition matrix $P^{(t)}=(p^{(t)}_{vw})_{v,w\in V(G)}$ at time~$t$, where $p^{(t)}_{vw}=\pr[Z_{t+1}=w\mid Z_t=v]$ is defined by
\[
    p_{vw}^{(t)} := \left\{\begin{matrix*}[l]
        \textbf{x}_{vw} & \text{if $y_ty_{t+1}\in E(L)$ and $vw\in E(G)$,}  \\ 
        \textbf{x}_{wv} & \text{if $y_{t+1}y_t\in E(L)$ and $wv\in E(G)$,} \\
        0 & \text{otherwise.} \end{matrix*}\right.
\]

\subsection{Azuma's inequality}

A sequence~$Z_0,\ldots,Z_k$ of random variables is a \emph{martingale} if~$Z_0$ is a fixed real number and~$\ex[Z_i\mid Z_0,\ldots,Z_{i-1}]=Z_{i-1}$ for all~$i\in[k]$.

We will need the following inequality by Azuma, see for example~\cite{azuma:67}.
\begin{lemma}\label{lemma: Azuma}
	Suppose~$Z_0,\ldots,Z_k$ is a martingale and $|Z_i-Z_{i-1}|\leq c$ for all~$i\in[k]$. Then for all~$t>0$, we have
	\begin{equation*}
		\pr[|Z_k-Z_0|\geq t] \leq 2\exp\left( -\frac{t^2}{2kc^2} \right).
	\end{equation*}
\end{lemma}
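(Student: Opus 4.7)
The plan is to prove Azuma's inequality via the standard exponential-moment (Bernstein/Chernoff) trick, which is routine but worth laying out. First, I would set $D_i := Z_i - Z_{i-1}$ so that $Z_k - Z_0 = \sum_{i=1}^k D_i$; by hypothesis $|D_i|\leq c$ almost surely and $\ex[D_i\mid Z_0,\ldots,Z_{i-1}] = 0$. For any $\lambda>0$, Markov's inequality applied to $e^{\lambda(Z_k-Z_0)}$ gives
\[
    \pr[Z_k-Z_0\geq t] \;\leq\; e^{-\lambda t}\,\ex\!\left[e^{\lambda\sum_{i=1}^k D_i}\right],
\]
so the task reduces to uniformly bounding the conditional moment generating function of each $D_i$.

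The key estimate, which is the only non-trivial step, is the conditional Hoeffding bound $\ex[e^{\lambda D_i}\mid Z_0,\ldots,Z_{i-1}] \leq e^{\lambda^2c^2/2}$. I would prove this pointwise in the conditioning by exploiting convexity of $x\mapsto e^{\lambda x}$: on $[-c,c]$ the function lies below the chord through $(\pm c, e^{\pm\lambda c})$, giving
\[
    e^{\lambda x}\;\leq\; \tfrac{c-x}{2c}\,e^{-\lambda c} \;+\; \tfrac{c+x}{2c}\,e^{\lambda c}\qquad\text{for } x\in[-c,c].
\]
Taking the conditional expectation kills the linear-in-$x$ term because $\ex[D_i\mid \cdots]=0$, leaving $\cosh(\lambda c)$, which is at most $e^{\lambda^2 c^2/2}$ by a term-by-term comparison of Taylor series.

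Iterating via the tower property,
\[
    \ex\!\left[e^{\lambda\sum_{i=1}^k D_i}\right]
    \;=\; \ex\!\left[e^{\lambda\sum_{i=1}^{k-1}D_i}\,\ex\bigl[e^{\lambda D_k}\,\big|\,Z_0,\ldots,Z_{k-1}\bigr]\right]
    \;\leq\; e^{\lambda^2c^2/2}\,\ex\!\left[e^{\lambda\sum_{i=1}^{k-1}D_i}\right],
\]
so by induction on $k$ the expectation is at most $e^{k\lambda^2 c^2/2}$. Combining with the Markov step gives $\pr[Z_k-Z_0\geq t]\leq \exp\!\bigl(-\lambda t + k\lambda^2c^2/2\bigr)$; optimising over $\lambda$ with the choice $\lambda = t/(kc^2)$ yields the one-sided bound $\exp(-t^2/(2kc^2))$. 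Applying the same argument to the martingale $(-Z_i)_{i=0}^k$ bounds $\pr[Z_k-Z_0\leq -t]$, and a union bound over the two tails produces the factor of $2$ to finish.
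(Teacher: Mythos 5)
Your proof is correct and complete: the decomposition into martingale differences, the chord/convexity argument giving the conditional bound $\ex[e^{\lambda D_i}\mid Z_0,\ldots,Z_{i-1}]\leq \cosh(\lambda c)\leq e^{\lambda^2c^2/2}$, the tower-property induction, the choice $\lambda=t/(kc^2)$, and the union bound over the two tails all work exactly as stated. The paper does not prove this lemma at all --- it simply cites Azuma's 1967 paper --- and what you have written is precisely the standard Hoeffding--Azuma argument that underlies that citation, so there is nothing to reconcile between the two.
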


\subsection{Trees and tree decomposition}
We write $(T,r_0)$ for a rooted tree~$T$ with root~$r_0$.
Embedding a rooted tree~$(T,r_0)$ in the plane gives a natural breadth-first ordering $(r_0,\ldots,r_m)$ of the vertices of~$T$.
By $T=(r_0,\ldots,r_m)$ we denote a rooted tree with fixed breadth-first order.
The \emph{depth} of a vertex $v\in(T,r)$ is the length of the unique path from~$r$ to~$v$.
For a vertex~$v$ in a rooted tree~$(T,r)$, let~$T(v)$ be the subtree of~$T$ containing all vertices below~$v$ (including~$v$),
that is, all vertices~$u$ such that the path between~$u$ and~$r$ in~$T$ contains~$v$.
We need the following simple lemma proven in \cite{JKKO:19}.
\begin{lemma}[{\cite[Proposition~6.5]{JKKO:19}}] \label{lemma: tree partition}
    Suppose $n,\Delta\geq2$ and $n\geq t\geq1$. Then for any rooted tree~$(T,r)$ on~$n$ vertices with $\Delta(T)\leq\Delta$, there exists a collection $(T_1,t_1),\ldots,(T_k,t_k)$ of pairwise vertex-disjoint rooted subtrees such that
    \begin{enumerate}[label=\normalfont(\roman*)]
        \item $V(T) = \bigcup_{i=1}^k V(T_i)$,
        \item $T_i\subseteq T(t_i)$ for every $i\in[k]$,
        \item the depth of $t_1,\ldots,t_k$ is non-decreasing, and
        \item $t\leq|T_i|\leq2\Delta t$ for every $i\in[k]$.
    \end{enumerate}
\end{lemma}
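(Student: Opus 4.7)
The plan is to construct the collection $(T_1, t_1), \ldots, (T_k, t_k)$ greedily in a bottom-up fashion, processing the vertices of $T$ in reverse breadth-first order. For each vertex $v$, I maintain a \emph{pending set} $P(v) \subseteq V(T(v))$ consisting of those descendants of $v$ (together with $v$ itself) that have not yet been assigned to any subtree in the collection; initially $P(v) = \emptyset$. When I process $v$ (after all its descendants), I form $S_v := \{v\} \cup \bigcup_{w} P(w)$, where the union runs over the children $w$ of $v$. If $|S_v| \geq t$, I declare $S_v$ with root $v$ to be a new rooted subtree in the collection and reset $P(v) = \emptyset$; otherwise I set $P(v) = S_v$ and pass it up to the parent of $v$. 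A simple induction shows that each $P(v)$ is either empty or a subtree of $T(v)$ containing $v$ with $|P(v)| < t$.

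The cutting rule ensures $|S_v| \geq t$ whenever a subtree is created, which gives the lower bound in (iv). The degree bound on $T$ gives the matching upper bound: $v$ has at most $\Delta$ children, each contributing a pending set of size less than $t$, so $|S_v| \leq 1 + \Delta(t-1) \leq \Delta t$. Property (ii) is immediate because each cut subtree equals $S_v$ for its root $v$, and hence lies in $T(v)$. For (iii), reverse BFS orders vertices by non-increasing depth, so the sequence of cut roots (in the order cut) has non-increasing depth; labelling the subtrees in the reverse order in which they were cut therefore yields roots $t_i$ of non-decreasing depth.

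The only subtle point is the final pending set $S_r$ at the root. If $|S_r| \geq t$ I simply cut it off as the last subtree and I am done. Otherwise $|S_r| < t \leq n$, so $S_r \subsetneq V(T)$, and hence at least one earlier cut must have occurred. Since $T$ is connected there exists $w \in S_r$ whose child $c$ lies outside $S_r$, and by construction $c$ must be the root of some previously cut subtree $T_j$ (indeed, $c \notin S_r$ forces $P(c) = \emptyset$, i.e.\ a cut at $c$). I then merge $S_r$ into $T_j$ and re-root the resulting subtree at $r$, labelling it $T_1$. The combined subtree is contained in $T(r) = T$, has root of minimum depth, contains at least $|T_j| \geq t$ vertices, and has size at most $|S_r| + |T_j| \leq (t-1) + \Delta t \leq 2\Delta t$, so all four properties remain satisfied. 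I expect this merging step to be the main technical obstacle, since it is precisely here that the slack factor of two in the upper bound $2\Delta t$ of (iv) is needed; the remainder of the argument is a routine verification.
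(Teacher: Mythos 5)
Your proposal is correct, and it is essentially the same argument the paper relies on (via \cite[Proposition~6.5]{JKKO:19}): cutting, in order of non-increasing depth, at the deepest vertex whose remaining subtree has at least $t$ vertices — your pending-set formulation is just an algorithmic restatement of this, with the same bound $|S_v|\leq 1+\Delta(t-1)\leq\Delta t$ and the same handling of the undersized leftover at the root by merging it into an adjacent previously cut piece, which is exactly where the factor $2$ in $2\Delta t$ is used.
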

Note that, in particular, $t_1=r$ and $T_1\cup\ldots\cup T_i$ is a tree for every $i\in[k]$.
The construction of the subtrees is done in reverse order by choosing~$y$ at maximal distance from~$r$ subject to $|T(y)|\geq t$. An easy modification of this argument yields the following claim.

\begin{lemma}\label{lemma: helping lemma tree decomposition}
    Suppose $n,\Delta\geq2$. Let~$(T,r)$ be a rooted tree on~$n$ vertices and $\Delta(T)\leq\Delta$. Let $n_0\in\bN$ such that $n \leq n_0 < n^4$. Then there exist pairwise vertex-disjoint rooted subtrees $(T_1,t_1),\ldots, (T_k,t_k)$ such that, denoting $n_i:=n_0-|T_1\cup\ldots\cup T_i|+1$,%
    \COMMENT{
    Let $(Q_1,q_1)\ldots,(Q_{s-1},q_{s-1})$ be already constructed subtrees of~$T$ by the process described below.
    In this case, $T':=T-(Q_1\cup\ldots\cup Q_{s-1})$ is a tree. Let $\ell_i := n_0 - n +|Q_1\cup\ldots\cup Q_i| + 1$ for $i\in[s-1]_0$.
    We want to construct~$Q_s$ such that $\ell_s^\frac14\leq|Q_s|\leq2\Delta\ell_s^\frac14$, where $\ell_s:=\ell_{s-1}+|Q_s|$.
    We choose~$q_s$ at maximal distance from~$r$ subject to $|T'(q_s)|\geq m$,
    where~$m$ is the smallest natural number such that $m^4-(m+\ell_{s-1})\geq0$ (note that this implies the depth of~$q_s$ is at most the depth of~$q_{s-1}$).
    Then $m\leq |T'(q_s)|\leq\Delta m$.
    Let $Q_s:=T'(q_s)$.
    As $m\geq2$, we have $m^4\leq(2(m-1))^4$, thus, $m^4-2^4m-2^4((m-1)^4-(m-1))\leq0$.
    Together with the inequality $\ell_{s-1}\geq(m-1)^4-(m-1)$, this yields $m^4-2^4(m+\ell_{s-1})\leq0$.
    Thus, $(m+\ell_{s-1})\leq m^4\leq 2^4(m+\ell_{s-1})$, that is, $(m+\ell_{s-1})^\frac14\leq m\leq 2(m+\ell_{s-1})^\frac14$.
    Then $\ell_s^\frac14=(|Q_s|+\ell_{s-1})^\frac14\leq|Q_s|$, as~$Q_s$ satisfies $|Q_s|\geq m$ and thus $|Q_s|^4-(|Q_s|+\ell_{s-1})\geq 0$.
    Further, $|Q_s|\leq \Delta m \leq 2\Delta(m+\ell_{s-1})^\frac14 \leq 2\Delta(|Q_s|+\ell_{s-1})^\frac14 = 2\Delta\ell_s^\frac14$, as desired.\\
    Let $r_{s-1} := n-|Q_1\cup\ldots\cup Q_{s-1}|$ denote the number of not yet used vertices in the tree decomposition.
    Assume we come to a point where the smallest natural number~$m$ with $m^4-(m+\ell_{s-1})\geq0$ satisfies $m > r_{s-1}$ (in this case it is not possible to construct a subtree~$Q_s$ with $|Q_s|\geq m$, as this would imply $|T| = n = r_{s-1} + |Q_1\cup\ldots\cup Q_{s-1}| < m + |Q_1\cup\ldots\cup Q_{s-1}| \leq |Q_1\cup\ldots\cup Q_s|\leq|T|$).
    In particular, we have $r_{s-1} < (r_{s-1}+\ell_{s-1})^{\frac14} = (n_0+1)^{\frac14}$.
    Note that $\ell_{s-1}^\frac14\leq |Q_{s-1}|\leq 2\Delta \ell_{s-1}^\frac14$.
    Thus, $n_0^\frac14 \leq |Q_{s-1}|+r_{s-1} \leq \frac52\Delta n_0^\frac14$
    (the first inequality follows from $(|Q_{s-1}|+r_{s-1})^4 \geq (\ell_{s-1}^\frac14+r_{s-1})^4 \geq \ell_{s-1}+r_{s-1}^4 \geq \ell_{s-1}+r_{s-1} = n_0+1$,
    the second inequality follows from $|Q_{s-1}|+r_{s-1}\leq 2\Delta \ell_{s-1}^\frac14 + r_{s-1} \leq 2\Delta (n_0+1)^\frac14+(n_0+1)^\frac14 \leq \frac52\Delta n_0^\frac14$).
    We add the remaining~$r_{s-1}$ vertices (and connecting edges) to~$Q_{s-1}$.\\
    Using this process we construct~$k$ rooted subtrees $(T_1,t_1),\ldots,(T_k,t_k)$.
    Let $T_i=Q_{k+1-i}$ and $t_i:=q_{k+1-i}$ for all $i\in[k]$.
    Obviously, (i) and~(ii) hold, and~(iii) holds by the observation above.
    We also have $n_0^\frac14\leq |T_1| \leq \frac{5}{2}n_0^\frac14$.
    For $i\geq1$, we have $\ell_{k+1-i} = n_0 - n + |Q_1\cup\ldots\cup Q_{k+1-i}| + 1 = n_0 - n + |T_i\cup\ldots\cup T_k| + 1 = n_0 - |T_1\cup\ldots\cup T_{i-1}|+1 = n_{i-1}$.
    As $\ell_{k+1-i}^\frac14 \leq |T_i|\leq \frac52\Delta \ell_{k+1-i}^\frac14$, (iv) holds.
    Hence, $(T_1,t_1),\ldots,(T_{k},t_k)$ satisfy the desired properties.
    }
    \begin{enumerate}[label=\normalfont(\roman*)]
        \item $V(T) = \bigcup_{i=1}^k V(T_i)$,
        \item $T_i\subseteq T(t_i)$ for every $i\in[k]$,
        \item the depth of $t_1,\ldots,t_k$ is non-decreasing, and
        \item $n_{i-1}^\frac14 \leq |T_i|\leq \frac52\Delta n_{i-1}^\frac14$ for every $i\in[k]$.
    \end{enumerate}
\end{lemma}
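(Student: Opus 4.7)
The plan is to mimic the construction used in Lemma~\ref{lemma: tree partition}, but with a moving target size: instead of insisting each piece have size in $[t, 2\Delta t]$ for a fixed $t$, I will insist that the $s$-th piece (built in reverse order) has size in $[\ell_s^{1/4}, 2\Delta \ell_s^{1/4}]$, where $\ell_s$ tracks the quantity $n_0 - n + |\text{already used}|+1$. After reversing the order of the pieces at the end, $\ell_s$ becomes $n_{i-1}$ and the required bounds drop out.

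More concretely, I would construct the pieces $(Q_1,q_1),(Q_2,q_2),\ldots$ greedily, bottom-up. Set $\ell_0 := n_0-n+1$, and at step~$s$ let $T' := T-(Q_1\cup\ldots\cup Q_{s-1})$ and $\ell_{s-1} := n_0 - n + |Q_1\cup\ldots\cup Q_{s-1}|+1$. Let $m=m(s)$ be the smallest positive integer with $m^4 - m \geq \ell_{s-1}$ (this is well-defined since $\ell_{s-1}\geq 1$, and forces $m\geq2$). Pick $q_s$ at maximal depth from~$r$ in $T'$ subject to $|T'(q_s)|\geq m$, and set $Q_s := T'(q_s)$. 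By the choice of $q_s$, each child of $q_s$ has a subtree of size less than $m$, so $m \leq |Q_s| \leq \Delta m$.

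Setting $\ell_s := \ell_{s-1}+|Q_s|$, the lower bound $\ell_s^{1/4}\leq |Q_s|$ is equivalent to $|Q_s|^4 - |Q_s|\geq \ell_{s-1}$, which follows from $|Q_s|\geq m$ and the monotonicity of $x\mapsto x^4-x$ for $x\geq 1$. For the upper bound $|Q_s|\leq 2\Delta\ell_s^{1/4}$, I use that for $m\geq2$, $m^4\leq 16(m-1)^4 \leq 16((m-1) + \ell_{s-1}) \leq 16\ell_s$ (the middle inequality by minimality of~$m$), so $|Q_s|^4 \leq \Delta^4 m^4\leq 16\Delta^4\ell_s$. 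The process terminates at some step~$s$ where the number $r_{s-1}:=n-|Q_1\cup\ldots\cup Q_{s-1}|$ of unused vertices drops below $m$, meaning $r_{s-1}<m$. At that point I absorb the remaining $r_{s-1}$ vertices into the previously built $Q_{s-1}$; a short computation using $|Q_{s-1}|\leq 2\Delta\ell_{s-1}^{1/4}$, $m\leq 2(\ell_{s-1}+1)^{1/4}$ (from $m^4\leq 16\ell_s$ applied one step earlier) and $r_{s-1}<m$ shows the enlarged piece still has size at most $\tfrac52\Delta n_0^{1/4}$.

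Finally, I reverse the order, setting $T_i := Q_{k+1-i}$ and $t_i := q_{k+1-i}$. Properties~(i),~(ii) are immediate, and~(iii) follows because the greedy bottom-up rule produces roots $q_1,q_2,\ldots$ of non-increasing depth. Under the reversal, $\ell_{k+1-i}$ becomes exactly $n_{i-1}$, turning the bound $\ell_{s}^{1/4}\leq|Q_s|\leq 2\Delta\ell_s^{1/4}$ into~(iv). The main obstacle is the bookkeeping at the final step: ensuring that the enlarged last piece (which receives the leftover vertices) still fits under $\tfrac52\Delta n_0^{1/4}$, and separately that the very first constructed piece $Q_1$ (with the smallest target) already has size $\geq \ell_1^{1/4}\geq 1$; once these boundary cases are handled, the inductive step is just the elementary arithmetic above.
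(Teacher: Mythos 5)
Your construction is the same as the paper's (greedy bottom-up pieces of adaptive target size, defined via the smallest $m$ with $m^4-m\geq\ell_{s-1}$, leftovers absorbed into the last-built piece, then reverse the order), and your arithmetic for the generic step is correct. However, the computation you sketch for the absorption step does not close for the small values of $\Delta$ that the lemma allows. You bound the number of leftover vertices by $r_{s-1}<m\leq 2(\ell_{s-1}+1)^{1/4}$, which (with $\ell_{s-1}\leq n_0$) only gives $|Q_{s-1}|+r_{s-1}\leq 2\Delta n_0^{1/4}+2(n_0+1)^{1/4}-1$; for $\Delta\in\{2,3\}$ this exceeds $\tfrac52\Delta n_0^{1/4}$ (e.g.\ for $\Delta=2$ you get roughly $6n_0^{1/4}$ against the required $5n_0^{1/4}$), so as stated the upper bound in (iv) for $i=1$ is not established. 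The factor of $2$ you lose on $r_{s-1}$ is exactly the problem: the right move is to apply the minimality of $m$ to $r_{s-1}$ itself. Since $r_{s-1}<m$, the integer $r_{s-1}$ fails the defining inequality, i.e.\ $r_{s-1}^4<r_{s-1}+\ell_{s-1}=n_0+1$, hence $r_{s-1}\leq n_0^{1/4}$; then $|Q_{s-1}|+r_{s-1}\leq 2\Delta n_0^{1/4}+n_0^{1/4}\leq\tfrac52\Delta n_0^{1/4}$ holds for every $\Delta\geq 2$. (Your cited justification for $m\leq 2(\ell_{s-1}+1)^{1/4}$ is also shaky, since at the terminal step no $Q_s$ and hence no $\ell_s$ exists; the inequality itself is true via $m^4\leq 16((m-1)+\ell_{s-1})$, but it is simply too weak here.)

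A second boundary item you wave at but do not actually check is the \emph{lower} bound for the enlarged piece: (iv) for $i=1$ requires $|T_1|\geq n_0^{1/4}$, and $|Q_{s-1}|\geq\ell_{s-1}^{1/4}$ alone does not give this because $\ell_{s-1}=n_0+1-r_{s-1}$ may be smaller than $n_0$. One needs the short estimate $(|Q_{s-1}|+r_{s-1})^4\geq(\ell_{s-1}^{1/4}+r_{s-1})^4\geq\ell_{s-1}+r_{s-1}^4\geq\ell_{s-1}+r_{s-1}=n_0+1$, which is exactly how the paper handles it (your remark about $Q_1$ having size at least $\ell_1^{1/4}$ concerns the other end of the ordering and is already covered by the generic step). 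With these two repairs your argument coincides with the paper's proof.
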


For $i\in\{2,\ldots,k\}$, let~$w_i$ be the parent of~$t_i$. We add~$w_i$ and $t_iw_i$ to $T_i$ for each $i\in\{2,\ldots,k\}$ and define~$w_i$ as the new root~$t_i$ of~$T_i$. This yields the following result.

\begin{lemma}\label{lemma: tree decomposition}
    Suppose $n,\Delta\geq2$. Let~$(T,r)$ be a rooted tree on~$n$ vertices and $\Delta(T)\leq\Delta$. Let $n_0\in\bN$ such that $n \leq n_0 < n^4$. \textcolor{black}{Then there exist rooted subtrees $(T_1,t_1),\ldots, (T_k,t_k)$} such that, denoting $n_i:=n_0-|T_1\cup\ldots\cup T_i|+1$,
    \begin{enumerate}[label=\normalfont(\roman*)]
        \item $V(T) = \bigcup_{i=1}^k V(T_i)$,
        \item $T_i\subseteq T(t_i)$ for every $i\in[k]$,
        \item the depth of $t_1,\ldots,t_k$ is non-decreasing,
         \item for all $i\in\{2,\ldots,k\}$, there is a unique $j<i$ such that~$T_i$ and~$T_j$ intersect, and in this case we have $V(T_i)\cap V(T_j)=\{t_i\}$, and
        \item $n_{i-1}^\frac14+1 \leq |T_i|\leq 3\Delta n_{i-1}^\frac14$ for every $i\in[k]$.
    \end{enumerate}
\end{lemma}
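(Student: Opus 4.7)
The plan is to apply Lemma~\ref{lemma: helping lemma tree decomposition} to $(T,r)$ with the given $n_0$, obtaining pairwise vertex-disjoint rooted subtrees $(T_1',t_1'),\ldots,(T_k',t_k')$ with $t_1'=r$ and satisfying the four conclusions of that lemma. I then realise the construction described in the paragraph preceding the statement: set $(T_1,t_1):=(T_1',t_1')$, and for each $i\in\{2,\ldots,k\}$ let $w_i$ be the parent of $t_i'$ in $T$ (well-defined since $t_i'\neq r$), define $T_i$ as the subgraph of $T$ on vertex set $V(T_i')\cup\{w_i\}$ with edge set $E(T_i')\cup\{t_i'w_i\}$, and re-root at $t_i:=w_i$.

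Properties (i)--(iii) and (v) follow essentially for free. Property (i) holds because $V(T_i)\supseteq V(T_i')$ and the $T_i'$ already cover $V(T)$. For (ii), since $t_i'$ is a child of $w_i$, we have $T_i'\subseteq T(t_i')\subseteq T(w_i)=T(t_i)$ and $t_i\in T(t_i)$, so $T_i\subseteq T(t_i)$. For (iii), the depth of $t_i$ equals $\text{depth}(t_i')-1$ for $i\geq 2$ and $0$ for $i=1$; non-decreasing depth of the $t_i'$ together with $\text{depth}(t_2')\geq 1$ (which holds since $t_2'\neq r$ by the vertex-disjointness of the $T_i'$) yields non-decreasing depth of the $t_i$. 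For (v), note $|T_i|=|T_i'|+1$ for $i\geq 2$, so the bounds $n_{i-1}^{1/4}\leq|T_i'|\leq\tfrac52\Delta n_{i-1}^{1/4}$ from the previous lemma give $n_{i-1}^{1/4}+1\leq|T_i|\leq\tfrac52\Delta n_{i-1}^{1/4}+1\leq 3\Delta n_{i-1}^{1/4}$, using $\tfrac12\Delta n_{i-1}^{1/4}\geq 1$; for $i=1$ the desired bounds carry over from $|T_1|=|T_1'|$.

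The real content is (iv). The key observation is a depth inequality: every vertex of $T_j'\subseteq T(t_j')$ has depth at least $\text{depth}(t_j')$, whereas each added vertex $w_l$ has depth exactly $\text{depth}(t_l')-1$. Hence if $w_i\in V(T_j')$ then $\text{depth}(t_j')\leq\text{depth}(w_i)=\text{depth}(t_i')-1<\text{depth}(t_i')$, and the monotonicity of $\text{depth}(t_l')$ forces $j<i$. Since the $T_j'$ partition $V(T)$, the index $j(i)$ with $w_i\in T_{j(i)}'$ is unique and satisfies $j(i)<i$, yielding $V(T_i)\cap V(T_{j(i)})\supseteq\{w_i\}=\{t_i\}$. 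The same inequality shows $w_l\notin V(T_i')$ for any $l\leq i$, which together with the disjointness of the $T_l'$ pins the intersection to exactly $\{t_i\}$.

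I expect (iv) to be the only part requiring genuine care, with the depth inequality above as its single main ingredient; everything else is bookkeeping once Lemma~\ref{lemma: helping lemma tree decomposition} is in hand.
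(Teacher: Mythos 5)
Your proposal is correct and is essentially the paper's own argument: the paper obtains this lemma from Lemma~\ref{lemma: helping lemma tree decomposition} precisely by adding the parent $w_i$ of each root $t_i$ (for $i\geq 2$) and re-rooting, and your depth argument for (iv) together with the bookkeeping for (i)--(iii) and (v) simply spells out what the paper leaves implicit. The only point worth noting (an imprecision inherited from the paper itself rather than introduced by you) is the lower bound in (v) at $i=1$: Lemma~\ref{lemma: helping lemma tree decomposition} literally gives only $n_0^{1/4}\leq|T_1|$ rather than $n_0^{1/4}+1$, a rounding-level discrepancy the paper absorbs under its stated convention of ignoring rounding issues.
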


\subsection{Random trees}
\textcolor{black}{We define a random tree as a random walk that branches at some vertices, and the branches are independent random trees.}
Suppose~$G$ is a graph on~$n$ vertices. Let~$\textbf{x}$ be a perfect fractional matching of~$G$,
let $T=(r_0,\ldots,r_m)$ be a tree
and let $R=(R_0,\ldots,R_m)$ be a random vector whose entries take values in~$V(G)$ and which represents the ordering of~$T$ as follows.
Let~$R_0$ be an arbitrary vertex in~$G$.
\textcolor{black}{We embed~$T$ in a root-to-leaf-manner into~$G$, where the children of any vertex are chosen independently according to the perfect fractional matching of~$G$.
To be more precise, suppose~$r_j$ is a child of~$r_i$. Let $M:=T-(T(r_j)\cup\{r_i\})$. Then~$M$ consists of all vertices whose embedding has no influence on the embedding of~$r_j$, given~$r_i$ has already been embedded. Let~$I$ be the index set of the vertices in~$M$, that is, $I:=\{i\in[m]_0 \mid r_i\in M\}$.
We define $p_{vw}^{ij}:=\pr[R_j=w\mid R_i=v, \bigwedge_{i'\in I'}R_{i'}=v_{i'}]:=\textbf{x}_{vw}$ for all $I'\subseteq I$ if $vw\in E(G)$, and $p_{vw}^{ij}:=0$ otherwise.
Note that, in particular, every path in~$T$ along a root-to-leaf path, when embedded into~$G$, can be regarded as a random walk on~$G$.
We call $R=(R_0,\ldots,R_m)$ a \emph{random tree according to~$T$ and~$\textbf{x}$} in~$G$. We may omit~$\textbf{x}$ if it is clear from the context.}
Observe that we may have $R_i=R_j$ for some $i\neq j$, that is, a realisation of~$R$ in~$G$ might not be a tree.

The definition in case~$T$ is an oriented tree and~$G$ a digraph works analogously, except, provided~$r_j$ is a child of~$r_i$, we define $p_{vw}^{ij}:=\textbf{x}_{vw}$ if $r_ir_j\in E(T)$ and $vw\in E(G)$, and $p_{vw}^{ij}:=\textbf{x}_{wv}$ if $r_jr_i\in E(T)$ and $wv\in E(G)$, and $p_{vw}^{ij}:=0$ otherwise.

\section{Normal perfect fractional matchings with large entropy}\label{sec:Normal perfect fractional matchings with large entropy}

The aim of this section is to prove that any~\mbox{$(n,\eps)$-digraph}~$G$ admits a perfect fractional matching that has large entropy
and is \mbox{$b$-normal} for some not too large~$b$ (Theorem~\ref{theorem: almost maximum matching}).
The latter is needed in Section~\ref{sec:Conditions for a random tree being well-behaved} to show that a random tree is \emph{well-behaved} with high probability.
The fact that the matching has large entropy plays an important role in finding a lower bound on the number of trees of appropriate size in Section~\ref{sec:Counting trees}.
In Lemma~\ref{lemma: new matching} we show that removing a random-like vertex set from~$G$ results in a digraph whose entropy is as large as expected by the natural heuristic. In Lemma~\ref{lemma: matching G-A and G} we also show that removing any not too large vertex set from~$G$ results in a digraph whose entropy is close to~$h(G)$.

We need the following lemmas proven in~\cite{CK:09, GGJKO:21}.

\begin{lemma}[{\cite[Lemma~4.2]{GGJKO:21}}]\label{lemma: c-normal matching}
	Suppose $\frac1n\ll\frac1b\ll\eps$. Let~$G$ be an $(n,\eps)$-digraph.
	Then~$G$ admits a $b$-normal perfect fractional matching.
\end{lemma}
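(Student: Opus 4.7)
The statement is equivalent to finding a $b$-normal perfect fractional matching of the balanced bipartite graph $B_G$, which has $\delta(B_G)=\delta^0(G)\geq(\tfrac12+\eps)n$. The plan is to construct $\mathbf{x}$ as a convex combination $\mathbf{x}:=(1-\alpha)\mathbf{y}+\alpha\mathbf{z}$ of two perfect fractional matchings of $B_G$: an auxiliary $\mathbf{y}$ whose nonzero weights are uniformly small, and an auxiliary $\mathbf{z}$ whose weights are positive on every edge and bounded below by $\Omega(1/n)$.

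For $\mathbf{y}$, I would iteratively peel off edge-disjoint perfect matchings. The key fact is that any balanced bipartite graph on $n+n$ vertices of minimum degree $\geq n/2$ satisfies Hall's condition (sets $S$ with $|S|\leq n/2$ are dominated by $N(v)$ for any $v\in S$; sets with $|S|>n/2$ must satisfy $N(S)=V_2$, because any omitted vertex would have degree at most $n-|S|<n/2$). Since removing a perfect matching drops the minimum degree by exactly one, one can extract $k\geq\lfloor\eps n\rfloor$ edge-disjoint perfect matchings $M_1,\ldots,M_k$ before the minimum degree drops below $n/2$. The uniform average $\mathbf{y}:=\frac{1}{k}\sum_{i}\mathbf{1}_{M_i}$ is then a perfect fractional matching with $\mathbf{y}_e\in\{0,1/k\}$ and hence $\mathbf{y}_e\leq 2/(\eps n)$.

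For $\mathbf{z}$, for every ordered pair $(e,f)$ of vertex-disjoint edges of $B_G$, I would apply Hall's theorem to $B_G$ with the four endpoints of $e\cup f$ removed (a graph still of minimum degree $>n/2$) to obtain a perfect matching $P_{e,f}\subseteq B_G$ containing both $e$ and $f$. Averaging over the $T=\Theta(n^4)$ such pairs, $\mathbf{z}:=T^{-1}\sum_{(e,f)}\mathbf{1}_{P_{e,f}}$ is a perfect fractional matching of $B_G$. A double-counting argument shows each edge $g$ appears in $\Omega(n^3)$ of the matchings $P_{e,f}$ and hence $\mathbf{z}_g=\Omega(1/n)$; to enforce uniformity in $g$ one would realise the $P_{e,f}$ as uniformly random Hall-matchings of the residual graph and appeal to a standard concentration inequality such as Lemma~\ref{lemma: Azuma}.

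Finally, taking $\alpha:=\tfrac12$ yields $\mathbf{x}_e\leq O(1/(\eps n))$ and $\mathbf{x}_e\geq\Omega(1/n)$, which establishes $b$-normality for any $b$ chosen sufficiently large in terms of $\eps$. The main obstacle is the uniform lower bound $\mathbf{z}_g=\Omega(1/n)$: the naive construction with a single matching $N_e$ per edge only yields $\mathbf{z}_g\geq 1/|E(B_G)|=\Omega(1/n^2)$, falling short by a factor of $n$. Bridging this gap requires exploiting the abundance of perfect matchings through any fixed edge of a sufficiently dense bipartite graph, which is precisely where the minimum semidegree hypothesis enters in an essential way.
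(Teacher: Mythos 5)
First, note that the paper does not prove this lemma at all --- it is quoted from \cite[Lemma~4.2]{GGJKO:21} --- so there is no internal proof to compare against; your attempt has to stand on its own, and it does not yet.

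The first half of your construction is fine: peeling off $\lfloor\eps n\rfloor$ edge-disjoint perfect matchings of $B_G$ via Hall's condition and averaging gives a perfect fractional matching $\mathbf{y}$ with $\mathbf{y}_e\leq\frac{2}{\eps n}$, and Hall also gives a perfect matching through any prescribed edge (or pair of disjoint edges). The genuine gap is exactly where you flag it, and your proposed remedy does not close it. If the matchings $P_{e,f}$ are chosen arbitrarily, an edge $g$ is only guaranteed to lie in the $\Theta(n^2)$ matchings with $g\in\{e,f\}$, so the construction gives $\mathbf{z}_g=\Omega(1/n^2)$, not $\Omega(1/n)$ --- the same shortfall as the one-matching-per-edge scheme, so passing from single edges to pairs gains nothing. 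Choosing the $P_{e,f}$ uniformly at random and ``appealing to Azuma'' cannot repair this: a concentration inequality controls deviation from the mean, but the mean is the problem. You would need $\pr[g\in P_{e,f}]=\Omega(1/n)$ (and, for the upper bound you also silently need, $O(1/n)$) for \emph{every} edge $g$ of $B_G$; these edge-marginals of a uniformly random perfect matching of a Dirac bipartite graph form a perfect fractional matching themselves, so the claim that they are $\Theta_\eps(1/n)$ on every edge is essentially an instance of the very lemma you are proving. In addition, without such marginal bounds nothing prevents many chosen $P_{e,f}$ from containing a fixed edge $g$, so the bound $\mathbf{x}_e\leq O(1/(\eps n))$ for $\mathbf{x}=\tfrac12\mathbf{y}+\tfrac12\mathbf{z}$ is also unjustified. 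In short, the proposal reduces the lemma to an equivalent unproved statement about random perfect matchings.

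A workable way to finish from your $\mathbf{y}$ is deterministic redistribution rather than randomness: for each edge $uv$ with too little weight, shift an amount of order $\frac{1}{bn}$ onto $uv$ along alternating $6$-cycles of the form $uv,\,vu',\,u'v'',\,v''u'',\,u''v',\,v'u$, where $v'=M_i(u)$, $u'=M_i(v)$ and $u''=M_i(v'')$ for one of the extracted matchings $M_i$; since $\delta(B_G)\geq(\tfrac12+\eps)n$, the number of admissible pairs $(i,v'')$ is at least $2\eps^2n^2$, and averaging the shift over all of them ensures every donor edge (always an edge of some $M_i$, of weight $\tfrac1k\approx\tfrac{1}{\eps n}$) loses only $O\bigl(\frac{1}{\eps^2 bn}\bigr)$ in total, while every edge gains at most $O\bigl(\frac{1}{bn}+\frac{1}{\eps^2bn}\bigr)$. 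This yields $b$-normality directly and is in the same spirit as the weight-shifting arguments the paper itself carries out in Theorem~\ref{theorem: almost maximum matching} and Lemma~\ref{lemma: new matching}.
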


\begin{lemma}[{\cite[Lemma~3.2]{CK:09}}]\label{lemma: 4cycle}
	Let~$G$ be a graph and~$\textbf{x}$ an edge weighting of~$G$.
	Let~$vwuz$ be a 4-cycle in~$G$ where~$\textbf{x}_{vw}\textbf{x}_{uz}\geq\textbf{x}_{wu}\textbf{x}_{zv}$.
	Let~$\tilde{\textbf{x}}\colon E(G)\to\bR_{\geq0}$ be defined by~$\tilde{\textbf{x}}_e:=\textbf{x}_e$ if $e\not\in\{vw,wu,uz,zv\}$ and $\tilde{\textbf{x}}_{vw} := \textbf{x}_{vw}-\alpha$, $\tilde{\textbf{x}}_{uz} := \textbf{x}_{uz}-\alpha$, \textcolor{black}{$\tilde{\textbf{x}}_{wu} := \textbf{x}_{wu}+\alpha$}, $\tilde{\textbf{x}}_{zv} := \textbf{x}_{zv}+\alpha$ for some~$\alpha>0$ such that~$\tilde{\textbf{x}}_{vw}, \tilde{\textbf{x}}_{wu}, \tilde{\textbf{x}}_{uz}, \tilde{\textbf{x}}_{zv}\in[0,1]$.
	If~$\tilde{\textbf{x}}_{vw}\tilde{\textbf{x}}_{uz}\geq\tilde{\textbf{x}}_{wu}\tilde{\textbf{x}}_{zv}$, then~$\tilde{\textbf{x}}$ is an edge weighting of~$G$ with~$h(\tilde{\textbf{x}}) \geq h(\textbf{x})$.
	Additionally, if~$\textbf{x}$ is a perfect fractional matching, so is~$\tilde{\textbf{x}}$.%
\COMMENT{
	Clearly, if~$\sum_{e\in E(v)} \textbf{x}_e=1$ for all~$v\in V(G)$, this is also true for~$\tilde{\textbf{x}}$. Let
	\begin{align*}
		g(\alpha) :=  - (\textbf{x}_{vw}-\alpha)\log(\textbf{x}_{vw}-\alpha) - (\textbf{x}_{wu}+\alpha)\log(\textbf{x}_{wu}+\alpha)
		 - (\textbf{x}_{uz}-\alpha)\log(\textbf{x}_{uz}-\alpha) - (\textbf{x}_{zv}+\alpha)\log(\textbf{x}_{vw}+\alpha).
	\end{align*}
	Then
	\begin{align*}
		g'(\alpha) = \log(\textbf{x}_{vw}-\alpha) - \log(\textbf{x}_{wu}+\alpha) + \log(\textbf{x}_{uz}-\alpha) - \log(\textbf{x}_{zv}+\alpha)
		= \log\left(\frac{(\textbf{x}_{vw}-\alpha)(\textbf{x}_{uz}-\alpha)}{(\textbf{x}_{wu}+\alpha)(\textbf{x}_{zv}+\alpha)}\right).
	\end{align*}
	Thus, as long as $(\textbf{x}_{vw}-\alpha)(\textbf{x}_{uz}-\alpha)\geq(\textbf{x}_{wu}+\alpha)(\textbf{x}_{zv}+\alpha)$, we have~$g'(\alpha)\geq0$ and~$g$ is monotonously nondecreasing.
	Hence,~$h(\tilde{\textbf{x}})-h(\textbf{x}) = g(\alpha)-g(0) \geq 0$.
}
\end{lemma}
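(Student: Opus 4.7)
The plan is short and direct. First I would verify that $\tilde{\textbf{x}}$ is still a perfect fractional matching whenever $\textbf{x}$ is: at each of the four vertices $v,w,u,z$ of the $4$-cycle, exactly two incident edges are modified, one gaining $\alpha$ and one losing $\alpha$, so the sum of incident weights at each of these vertices is unchanged; no other vertex is affected. The hypothesis $\tilde{\textbf{x}}_{vw},\tilde{\textbf{x}}_{wu},\tilde{\textbf{x}}_{uz},\tilde{\textbf{x}}_{zv}\in[0,1]$ then guarantees that $\tilde{\textbf{x}}$ is a legitimate edge weighting.

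For the entropy comparison I would isolate the four summands of $h(\textbf{x})=\sum_{e}\textbf{x}_e\log(1/\textbf{x}_e)$ that actually change, all other terms being identical for $\textbf{x}$ and $\tilde{\textbf{x}}$. Setting
$$g(\alpha') := -(\textbf{x}_{vw}-\alpha')\log(\textbf{x}_{vw}-\alpha') - (\textbf{x}_{wu}+\alpha')\log(\textbf{x}_{wu}+\alpha') - (\textbf{x}_{uz}-\alpha')\log(\textbf{x}_{uz}-\alpha') - (\textbf{x}_{zv}+\alpha')\log(\textbf{x}_{zv}+\alpha'),$$
we have $h(\tilde{\textbf{x}})-h(\textbf{x}) = g(\alpha)-g(0)$. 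A direct computation, in which the $\log e$ contributions from the two $-\alpha'$ terms cancel those from the two $+\alpha'$ terms, yields
$$g'(\alpha') = \log\frac{(\textbf{x}_{vw}-\alpha')(\textbf{x}_{uz}-\alpha')}{(\textbf{x}_{wu}+\alpha')(\textbf{x}_{zv}+\alpha')},$$
which is nonnegative precisely when $(\textbf{x}_{vw}-\alpha')(\textbf{x}_{uz}-\alpha') \geq (\textbf{x}_{wu}+\alpha')(\textbf{x}_{zv}+\alpha')$.

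The only nonroutine point, and the one step I would take care over, is that the map $\alpha'\mapsto (\textbf{x}_{vw}-\alpha')(\textbf{x}_{uz}-\alpha') - (\textbf{x}_{wu}+\alpha')(\textbf{x}_{zv}+\alpha')$ is in fact \emph{linear} in $\alpha'$: the two $\alpha'^2$ terms cancel, leaving $\textbf{x}_{vw}\textbf{x}_{uz}-\textbf{x}_{wu}\textbf{x}_{zv} - \alpha'(\textbf{x}_{vw}+\textbf{x}_{uz}+\textbf{x}_{wu}+\textbf{x}_{zv})$. The two inequalities in the hypothesis assert exactly that this linear function is nonnegative at $\alpha'=0$ and at $\alpha'=\alpha$, so it is nonnegative throughout $[0,\alpha]$. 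Hence $g'\geq 0$ on $[0,\alpha]$, $g$ is nondecreasing, and $h(\tilde{\textbf{x}}) \geq h(\textbf{x})$. Without the linearity observation one would only know $g'\geq 0$ at the endpoints of $[0,\alpha]$ and not along the interval, which is why I would flag this as the single conceptual step of the proof; the remaining bookkeeping, including the standard continuity extension $0\log 0=0$ needed when $\alpha$ drives an edge weight to zero, is entirely routine.
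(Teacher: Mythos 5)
Your proof is correct and follows essentially the same route as the paper's: check the vertex sums are preserved, write the entropy change as $g(\alpha)-g(0)$ for the four affected edges, compute $g'$, and conclude monotonicity from the sign of the product comparison. Your explicit observation that $(\textbf{x}_{vw}-\alpha')(\textbf{x}_{uz}-\alpha')-(\textbf{x}_{wu}+\alpha')(\textbf{x}_{zv}+\alpha')$ is linear in $\alpha'$, so nonnegativity at the two endpoints gives nonnegativity on all of $[0,\alpha]$, is exactly the step the paper leaves implicit, and including it is a welcome precision rather than a deviation.
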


The following lemma yields an upper bound on the sum of edge weights of all edges with large edge weight, given the perfect fractional matching has large entropy.

\begin{lemma}\label{lemma: little weight}
	Let~$G$ be a graph on~$2n$ vertices and let $\textbf{x}$ be a perfect fractional matching of~$G$.
	Let~$b>1$ and $M:=\{e\in E(G)\mid \textbf{x}_e\geq\frac{b}{n}\}$.
	Suppose $h(\textbf{x})\geq n\log\frac{n}{2}$.
	Then $\sum_{e\in M}\textbf{x}_e\leq\frac{4n}{\log b}$.
\end{lemma}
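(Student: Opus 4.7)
The plan is to produce an upper bound on $h(\textbf{x})$ that decreases as $W_M := \sum_{e \in M} \textbf{x}_e$ grows, and then contrast it with the hypothesis $h(\textbf{x}) \geq n \log(n/2)$. Throughout I use the standing facts that $\sum_e \textbf{x}_e = n$ (since $\textbf{x}$ is a perfect fractional matching on $2n$ vertices) and $|E(G)| \leq \binom{2n}{2} \leq 2n^2$.

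First I would split the entropy as
\[
h(\textbf{x}) = \sum_{e \in M} \textbf{x}_e \log\frac{1}{\textbf{x}_e} + \sum_{e \notin M} \textbf{x}_e \log\frac{1}{\textbf{x}_e}.
\]
For each $e \in M$ the defining inequality $\textbf{x}_e \geq b/n$ gives $\log(1/\textbf{x}_e) \leq \log(n/b)$, so the first sum is at most $W_M \log(n/b)$. For the complement I would exploit concavity of $f(t) = -t\log t$ via Jensen's inequality, together with the crude bound $|E(G) \setminus M| \leq 2n^2$, to obtain
\[
\sum_{e \notin M} \textbf{x}_e \log\frac{1}{\textbf{x}_e} \;\leq\; 2n^2 \cdot f\!\left(\frac{n - W_M}{2n^2}\right) \;=\; (n - W_M) \log\frac{2n^2}{n - W_M}.
\]
Here one uses that $k \mapsto (n - W_M)\log(k/(n-W_M))$ is increasing in $k$ to justify replacing the actual number of non-heavy edges by $2n^2$.

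Combining these two estimates with $h(\textbf{x}) \geq n \log(n/2) = n\log n - n$ and routinely expanding the logarithms should reduce to the single inequality
\[
W_M \log b \;\leq\; (n - W_M) \log\frac{n}{n - W_M} + 2n - W_M.
\]
Applying the elementary bound $u \log(1/u) \leq 1$ for $u \in (0,1]$ with $u = (n-W_M)/n$ controls the first term on the right by $n$, yielding $W_M \log b \leq 3n$, which is even stronger than the claimed $W_M \leq 4n/\log b$.

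Conceptually this is just a Jensen-plus-splitting argument, so I do not expect a real obstacle. The only delicate point is the bookkeeping in the algebraic simplification and verifying that the Jensen step is calibrated correctly: the upper bound $|E(G)| \leq 2n^2$ on the number of edges is what prevents the non-$M$ contribution from absorbing too much weight and trivialising the inequality. An alternative attempt, say comparing $\textbf{x}$ with the uniform distribution on $E(G)$, would ultimately produce essentially the same computation.
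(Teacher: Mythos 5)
Your proof is correct and follows essentially the same route as the paper: split the entropy over $M$ and its complement, bound the complement term via Jensen's inequality using $|E(G)|\le 2n^2$, and compare the resulting upper bound with the hypothesis $h(\textbf{x})\ge n\log\frac{n}{2}$. The only (harmless) deviation is that you bound the heavy-edge contribution pointwise by $W_M\log\frac{n}{b}$ instead of via Jensen on $M$, which even gives the slightly better constant $3$ in place of $4$.
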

\begin{proof}
    Observe the following consequence of Jensen's inequality. If $M\subseteq E(G)$ and \mbox{$\sum_{e\in M}\textbf{x}_e = a$}, then $h_{\textbf{x}}(M)\leq a\log\frac{|M|}{a}$.%
    \COMMENT{
    We have~$\sum_{e\in M}\frac{\textbf{x}_e}{a}=1$. Jensen's inequality yields
    \begin{equation*}
        \frac{h_{\textbf{x}}(M)}{a} = \sum_{e\in M} \frac{\textbf{x}_e}{a}\log\left(\frac{1}{\textbf{x}_e}\right) \leq \log\left(\sum_{e\in M}\frac{1}{a}\right)=\log\frac{\abs{M}}{a}
    \end{equation*}
    and the statement follows from multiplying with~$a$.
    }
    As $\sum_{e\in E(G)}\textbf{x}_e=n$, we have $|M|\leq\frac{n^2}{b}$.
    Let $\beta:=\frac1n\sum_{e\in M}\textbf{x}_e$.
    By the observation stated above, with~$\beta n$ playing the role of~$a$, we obtain $h_{\textbf{x}}(M)\leq \beta n\log\frac{n}{b\beta}$.
    Similarly, $|E(G)\setminus M|\leq 2n^2$ and $\sum_{e\in E(G)\setminus M}\textbf{x}_e=(1-\beta)n$.
    We again use the observation stated above, now with~$(1-\beta)n$ playing the role of~$a$, to obtain $h_{\textbf{x}}(E(G)\setminus M) \leq\left(1-\beta\right)n\log\frac{2n}{1-\beta}$.
    Note that $-x\log x - (1-x)\log(1-x)\leq 1$ for any $x\in[0,1]$, which also follows from Jensen's inequality. In particular, $-\beta n\log\beta-(1-\beta)n\log(1-\beta)\leq n$.
    Thus,
    \begin{align*}
	h(\textbf{x}) &= h_{\textbf{x}}(M) + h_{\textbf{x}}(E(G)\setminus M)
	\leq \beta n\log\frac{n}{b\beta} + (1-\beta)n\log\frac{2n}{1-\beta} \\
	&= \beta n \log\frac{n}{2} + \beta n \log\frac{2}{b\beta} + (1-\beta)n\log \frac{n}{2} + (1-\beta)n \log\frac{4}{1-\beta} \\
	&= n\log\frac{n}{2} - \beta n\log\beta - (1-\beta)n\log(1-\beta) + \beta n + 2(1-\beta)n - \beta n\log b \\
	&\leq n\log\frac{n}{2} + 4n - \beta n\log b.
    \end{align*}
    As $h(\textbf{x})\geq n\log\frac{n}{2}$ by assumption, we obtain $\beta \leq \frac{4}{\log b}$.
\end{proof}

The following result shows that we can modify a perfect fractional matching with large entropy to obtain a perfect fractional matching which is $b$-normal and whose entropy is only slightly smaller. Note that any digraph~$G$ on~$n$ vertices with $\delta^0(G)\geq\frac{n}{2}$ admits a perfect fractional matching~$\textbf{m}$ that satisfies $h(\textbf{m})\geq n\log\delta^0(G)$ (\cite[Theorem~3.1]{CK:09}).

\begin{theorem} \label{theorem: almost maximum matching}
    Suppose $\frac1n\ll\frac1b\ll\eps$. Let~$G$ be an \mbox{$(n,\eps)$-digraph} and let~$\textbf{m}$ be a perfect fractional matching of~$G$ that satisfies $h(\textbf{m})\geq n\log\frac{n}{2}+\eps n$. Then there is a $b$-normal perfect fractional matching $\textbf{x}$ of~$G$ that satisfies $h(\textbf{x}) \geq h(\textbf{m}) - \eps n$.
\end{theorem}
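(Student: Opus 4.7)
The plan is a two-phase construction: first secure the lower bound on edge weights by mixing $\textbf{m}$ with a known well-behaved matching, and then enforce the upper bound through iterative $4$-cycle swaps that cannot decrease entropy.

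For the first phase, introduce an auxiliary constant $b_0$ satisfying $\frac{1}{b} \ll \frac{1}{b_0} \ll \eps$ and apply Lemma~\ref{lemma: c-normal matching} to obtain a $b_0$-normal perfect fractional matching $\textbf{y}$ of $G$; since $\textbf{y}_e \leq b_0/n$ everywhere, this yields $h(\textbf{y}) \geq n\log(n/b_0)$. Setting $\alpha := b_0/b$, define $\textbf{z} := (1-\alpha)\textbf{m} + \alpha\textbf{y}$. Then $\textbf{z}$ is again a perfect fractional matching (the matching polytope is convex), and every edge satisfies $\textbf{z}_e \geq \alpha \textbf{y}_e \geq \frac{1}{bn}$. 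Because $x \mapsto -x\log x$ is concave, so is $h$, and combined with $h(\textbf{m}) \leq n \log n$ this gives
\[ h(\textbf{z}) \;\geq\; (1-\alpha)h(\textbf{m}) + \alpha h(\textbf{y}) \;\geq\; h(\textbf{m}) - \alpha n \log b_0 \;\geq\; h(\textbf{m}) - \tfrac{\eps}{2}n \]
by the chosen hierarchy of constants.

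For the second phase, I would iteratively reduce weights of edges with $\textbf{z}_e > b/n$ via $4$-cycle swaps in $B_G$. Since $h(\textbf{z}) \geq n\log(n/2)$, Lemma~\ref{lemma: little weight} applied to $B_G$ bounds the total weight on such heavy edges by $\frac{4n}{\log b} = o(n)$. For a heavy edge $e = v^+w^-$, choosing $u \in N^-_G(w)$ and $z \in N^+_G(v)\cap N^+_G(u)$ yields, by the semidegree bound $\delta^0(G) \geq (\tfrac12+\eps)n$, at least $\Omega(\eps n^2)$ candidate $4$-cycles $v^+w^-u^+z^-$ through $e$ in $B_G$. Because the heavy-weight budget is $o(n)$, almost all of these $4$-cycles have $\textbf{z}_{u^+z^-}$ of order $1/n$ and both $\textbf{z}_{w^-u^+}$ and $\textbf{z}_{z^-v^+}$ of order at most $O(1/n)$, so the product inequality $\textbf{z}_{v^+w^-}\textbf{z}_{u^+z^-} \geq \textbf{z}_{w^-u^+}\textbf{z}_{z^-v^+}$ required by Lemma~\ref{lemma: 4cycle} holds with margin. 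Performing a sequence of small swaps (each of parameter on the order of $1/n$) along different candidate $4$-cycles drives the weight of $e$ down to $b/n$ while keeping every touched edge inside $[\frac{1}{bn}, \frac{b}{n}]$; each swap preserves entropy by Lemma~\ref{lemma: 4cycle}. Iterating over all heavy edges yields the desired $b$-normal $\textbf{x}$ with $h(\textbf{x}) \geq h(\textbf{z}) \geq h(\textbf{m}) - \eps n$.

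The main obstacle is Phase~2: every swap must simultaneously keep all four edges of the $4$-cycle inside $[\frac{1}{bn}, \frac{b}{n}]$ and satisfy the product monotonicity of Lemma~\ref{lemma: 4cycle} throughout the swap — so each swap parameter is forced to be small and many swaps per heavy edge are required. Coping with this needs careful accounting: one must rotate through the $\Omega(\eps n^2)$ candidate $4$-cycles so that no selected opposite edge is ever depleted below $\frac{1}{bn}$. The combination of (i)~an abundance of $4$-cycles furnished by the semidegree hypothesis and (ii)~the $o(n)$ bound from Lemma~\ref{lemma: little weight}, which forces opposite and gaining edges to be typically of order $1/n$, is precisely what makes this bookkeeping feasible and ensures that the total entropy loss over both phases stays below $\eps n$.
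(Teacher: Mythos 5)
Your overall architecture matches the paper's proof: mix $\textbf{m}$ with a normal matching from Lemma~\ref{lemma: c-normal matching} to gain a pointwise lower bound, then use Lemma~\ref{lemma: little weight} and repeated applications of Lemma~\ref{lemma: 4cycle} on $4$-cycles of $B_G$ to push heavy edges down without losing entropy. Phase~1 as you wrote it is fine (the concavity shortcut is even cleaner than the paper's term-by-term estimate). The genuine gap is in Phase~2, and it is caused by your choice $\alpha=b_0/b$: this makes the pointwise floor exactly $\frac{1}{bn}$, which leaves no slack at all. First, an opposite edge $u^+z^-$ may well have weight only $\Theta(\frac{1}{bn})$ (wherever $\textbf{m}$ vanishes, $\textbf{z}_e=\alpha\textbf{y}_e\leq\frac{b_0^2}{bn}\ll\frac1n$); your claim that the $o(n)$ heavy-weight budget forces opposite edges to be ``typically of order $1/n$'' is not justified — Lemma~\ref{lemma: little weight} bounds the total weight on heavy edges and says nothing about how small the light edges are. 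With opposite weight $\approx\frac{1}{bn}$ and heavy weight $\approx\frac{b}{n}$, the product on the heavy diagonal is only $\approx\frac{1}{n^2}$, while the gaining edges typically have weight $\Theta(\frac1n)$, so the hypothesis $\textbf{z}_{v^+w^-}\textbf{z}_{u^+z^-}\geq\textbf{z}_{w^-u^+}\textbf{z}_{z^-v^+}$ of Lemma~\ref{lemma: 4cycle} need not hold, let alone ``with margin'' or after the swap. Second, the opposite edges must collectively absorb a loss of order $\sum_{e}r_e\approx\frac{4}{\eps^2 n\log b}$, which is far larger than $\frac{1}{bn}$; an edge sitting at the floor cannot lose anything, and no rotation scheme you describe shows that enough opposite candidates strictly above the floor exist, so $b$-normality of the final weighting is not secured.

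The fix is exactly what the paper does: take the mixing parameter to be a constant in the hierarchy, $\frac1b\ll\lambda\ll\frac{1}{b_0}\ll\eps$, rather than $b_0/b$. Then every edge of the mixed matching has weight at least $\frac{\lambda}{b_0 n}$, a ``constant over $n$'' floor independent of $b$. Restricting to $4$-cycles whose two gaining edges each have weight at most $\frac{1}{\eps n}$ (of which there are at least $\eps^2n^2$, since at any vertex fewer than $\eps n$ incident edges can exceed $\frac{1}{\eps n}$), the product condition holds with room to spare because $\frac{b}{n}\cdot\frac{\lambda}{b_0 n}\geq\frac{1}{\eps^2n^2}$ once $b$ is chosen last, and the total depletion $\frac{4}{\eps^2 n\log b}$ of any opposite edge is negligible against $\frac{\lambda}{b_0 n}$, so all weights stay in $[\frac{1}{bn},\frac{b}{n}]$ throughout. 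Without this extra slack from a $b$-independent mixing constant, your Phase~2 bookkeeping cannot be completed.
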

\begin{proof}
    We first prove that~$G$ admits a matching~$\textbf{x}$ that satisfies $h(\textbf{x})\geq h(\textbf{m})-\eps n$ and where each edge weight is not too small by taking a linear combination of~$\textbf{m}$ and a $c$-normal perfect fractional matching. We then show that it is possible to reduce the weight of edges with large weight without reducing the entropy.
    
    Let~$\lambda$ and~$c$ be such that $\frac{1}{b}\ll\lambda\ll\frac{1}{c}\ll\eps$.
    Let~$\textbf{n}$ be a $c$-normal perfect fractional matching of~$G$ (which exists by Lemma~\ref{lemma: c-normal matching}). 
    Define \mbox{$\textbf{x}:=(1-\lambda)\textbf{m} + \lambda\textbf{n}$}.
    Then~$\textbf{x}$ is a perfect fractional matching and
    \begin{equation}\label{h(x)}
        h(\textbf{x}) = (1-\lambda) \sum_{e\in E(G)} \textbf{m}_e\log\frac{1}{\textbf{x}_e} + \lambda \sum_{e\in E(G)} \textbf{n}_e\log\frac{1}{\textbf{x}_e}.
    \end{equation}

    In what follows we bound both terms in~\eqref{h(x)} individually.
    Observe that $\log\frac{1}{\textbf{x}_e} = \log\frac{1}{\textbf{m}_e} + \log\frac{1}{1-\lambda+\lambda\frac{\textbf{n}_e}{\textbf{m}_e}} \geq \log\frac{1}{\textbf{m}_e} - \log(1+\lambda\frac{\textbf{n}_e}{\textbf{m}_e}) \geq \log\frac{1}{\textbf{m}_e}-2\lambda\frac{\textbf{n}_e}{\textbf{m}_e}$. This yields
    \begin{align}\label{m_e}
        \sum_{e\in E(G)} \textbf{m}_e\log\frac{1}{\textbf{x}_e}
        &\geq h(\textbf{m}) - 2\lambda \sum_{e\in E(G)} \textbf{n}_e
        = h(\textbf{m}) - 2\lambda n,
    \end{align}
    which yields a lower bound for the first term in~\eqref{h(x)}.
    
    To bound the second term in~\eqref{h(x)} from below, we proceed as follows. For $i\in\bZ$, let $M_i:=\{e\in E(G)\mid \frac{c^{i-1}}{n} \leq \textbf{m}_e < \frac{c^i}{n}\}$.
    Note that $|M_i| \leq \frac{n^2}{c^{i-1}}$.
    Define further $N_-:=\bigcup_{i<0} M_i$, $N_0:=M_0\cup M_1$ and $N_+:=\bigcup_{i\geq 2} M_i$.

    Observe that $\textbf{n}_e\geq\frac{1}{cn}$ and thus $\textbf{n}_e\log\frac{1}{\textbf{n}_e}\leq\textbf{n}_e\log(cn)$. This implies
    \begin{equation}\label{n_elogn}
        \textbf{n}_e\log n \geq \textbf{n}_e\log\frac{1}{\textbf{n}_e} - \textbf{n}_e\log c.
    \end{equation}
    
    For $e\in N_-$, we have $\textbf{x}_e\leq\textbf{n}_e$ and thus
    \begin{equation}\label{N_-}
        \sum_{e\in N_-} \textbf{n}_e\log\frac{1}{\textbf{x}_e} 
        \geq \sum_{e\in N_-} \textbf{n}_e\log\frac{1}{\textbf{n}_e}.
    \end{equation}

    For $e\in N_0$, we have $\textbf{x}_e\leq\frac{c}{n}$. This yields
    \begin{align}\label{N_0}
        \sum_{e\in N_0} \textbf{n}_e \log\frac{1}{\textbf{x}_e}
        &\geq \sum_{e\in N_0} \textbf{n}_e \log\frac{n}{c}
        \stackrel{\eqref{n_elogn}}{\geq} \sum_{e\in N_0} \textbf{n}_e\log\frac{1}{\textbf{n}_e} - 2\log c\cdot \sum_{e\in N_0} \textbf{n}_e \\
        &\geq \sum_{e\in N_0} \textbf{n}_e\log\frac{1}{\textbf{n}_e} - 2n\log c. \nonumber
    \end{align}

    For $e\in N_+$, recall that $\textbf{x}_e\leq\textbf{m}_e$. Note that $\sum_{e\in M_i}\textbf{n}_e \leq |M_i|\frac{c}{n} \leq nc^{2-i}$ for $i\geq 2$. We compute
    \begin{align*}
        \sum_{e\in M_i} \textbf{n}_e\log\frac{1}{\textbf{x}_e}
        &\geq \sum_{e\in M_i} \textbf{n}_e\log\frac{1}{\textbf{m}_e}
        \geq \sum_{e\in M_i} \textbf{n}_e\log\frac{n}{c^i}
        = \sum_{e\in M_i} \textbf{n}_e\log n - i\log c\cdot\sum_{e\in M_i}\textbf{n}_e \\
        &\stackrel{\eqref{n_elogn}}{\geq} \sum_{e\in M_i}\textbf{n}_e\log\frac{1}{\textbf{n}_e} - 2i\log c\cdot\sum_{e\in M_i}\textbf{n}_e
        \geq \sum_{e\in M_i}\textbf{n}_e\log\frac{1}{\textbf{n}_e} - 2n\log c\cdot ic^{2-i}.
     \end{align*}
     This yields%
    \COMMENT{We have
    \begin{align*}
        2 + \sum_{i\geq 3} \frac{i}{c^{i-2}}
        = \sum_{i\geq 2} \frac{i}{c^{i-2}}
        = \sum_{i\geq 3} \frac{i}{c^{i-3}} - \sum_{i\geq 3} \frac{1}{c^{i-3}}
        = c\sum_{i\geq 3} \frac{i}{c^{i-2}} - \sum_{i\geq 0} \frac{1}{c^i}
        = c\sum_{i\geq 3} \frac{i}{c^{i-2}} - \frac{c}{c-1}
    \end{align*}
    and thus $\sum_{i\geq3}\frac{i}{c^{i-2}} = \frac{2+\frac{c}{c-1}}{c-1} \leq 1$.
    This yields $\sum_{i\geq 2} \frac{i}{c^{i-2}}
        = 2 + \sum_{i\geq 3} \frac{i}{c^{i-2}}
        \leq 3$.}
     \begin{align}\label{N_+}
        \sum_{e\in N_+} \textbf{n}_e\log\frac{1}{\textbf{x}_e}
        \geq \sum_{e\in N_+} \textbf{n}_e\log\frac{1}{\textbf{n}_e} - 2n\log c\sum_{i\geq 2}\frac{i}{c^{i-2}}
        \geq \sum_{e\in N_+} \textbf{n}_e\log\frac{1}{\textbf{n}_e} - 6n\log c,
    \end{align}
    and hence by employing \eqref{N_-}, \eqref{N_0} and \eqref{N_+}, we conclude
    \begin{align} \label{n_e}
        \sum_{e\in E(G)} \textbf{n}_e\log\frac{1}{\textbf{x}_e} \geq h(\textbf{n}) - 8n\log c,
    \end{align}
    which is a lower bound for the second term in \eqref{h(x)}.
    
    Consequently, using \eqref{h(x)}, \eqref{m_e} and \eqref{n_e} gives rise to
    \begin{align*}
        h(\textbf{x})
        \geq (1-\lambda) h(\textbf{m}) + \lambda h(\textbf{n}) - 9\lambda n \log c.
    \end{align*}
    In particular, as $h(\textbf{n}) \geq n\log n - n\log c$ and $h(\textbf{m})\leq n\log n$, this yields
    \begin{align*}
        h(\textbf{x})
        \geq h(\textbf{m}) - 10\lambda n\log c
        \geq h(\textbf{m}) - \eps n,
    \end{align*}
    where we used $\lambda\ll\frac1c\ll\eps$ in the last inequality.
    In particular, we have $h(\textbf{x})\geq n\log\frac{n}{2}$.

    Observe that $\textbf{x}_e\geq\frac{\lambda}{cn}$ for all $e\in E(G)$.
    We prove that it is possible to reduce the weight of the edges with large weight
	without reducing the entropy of~$\textbf{x}$
	and without changing the lower bound on~$\textbf{x}$ significantly.
    
    These arguments are easier to comprehend if we look at the balanced bipartite graph on~$2n$ vertices corresponding to~$G$, which we call~$B$. We also write~$\textbf{x}$ for the perfect fractional matching of~$B$ that arises naturally from that of~$G$.
    Note that these edge weightings are essentially the same, and if we prove that we can change the edge weighting on~$B$ to obtain a $b$-normal perfect fractional matching, the same holds true for~$G$.

    Let $M:=\{e\in E(B) \mid \textbf{x}_e\geq\frac{b}{n}\}$
	(we may assume that~$M$ is nonempty).
	Fix~$vw\in M$.
	We consider 4-cycles~$vwuz$ in~$B$ where $\textbf{x}_{wu},\textbf{x}_{vz}\leq\frac{1}{\eps n}$.
	We claim that there are (at least)~$\eps^2 n^2$ such 4-cycles.
    Let $Z:=\{z\in N(v)\mid \textbf{x}_{vz}\leq \frac{1}{\eps n}\}$ and $U:=\{u\in N(w) \mid \textbf{x}_{wu} \leq \frac{1}{\eps n}\}$.
	As there are at most~$\eps n$ choices to select $x\in N(v)$ such that $\textbf{x}_{vx}\geq\frac{1}{\eps n}$, we obtain $|Z|\geq\delta(B)-\eps n\geq \frac{n}{2}$, and by similar reasoning $|U|\geq\frac{n}{2}$.
    As $\delta(B)\geq(\frac12+\eps)n$, each vertex $z\in Z$ has at least~$\eps n$ neighbours in~$U$. Let $Z'\subseteq Z$ such that $|Z'|=\eps n$, and for each $z\in Z'$, let $U_z'\subseteq U\cap N(z)$ such that $|U'_z|=\eps n$. 
    Let $U':=\bigcup_{z\in Z'}U_z'$.
    As for $vwuz$ is a 4-cycle for any $z\in Z'$ and $u\in U_z'$, 
    this yields~$\eps^2 n^2$ 4-cycles $vwuz$ where $\textbf{x}_{wu},\textbf{x}_{vz}\leq\frac{1}{\eps n}$ (note that the actual number of such 4-cycles may be larger).
	
	Let~$vwuz$ be such a 4-cycle.
    Then $\textbf{x}_{vw}\textbf{x}_{zu} \geq \frac{b}{n}\frac{\lambda}{cn} \geq \frac{1}{\eps^2n^2} \geq \textbf{x}_{vz}\textbf{x}_{wu}$, as $\frac{1}{b}\ll\lambda\ll\frac{1}{c}\ll\eps$.
	According to Lemma~\ref{lemma: 4cycle},
	we can reduce the weights of~$\textbf{x}_{vw}$ and~$\textbf{x}_{zu}$
	and increase those of~$\textbf{x}_{vz}$ and~$\textbf{x}_{wu}$ by the same amount
	as long as $\tilde{\textbf{x}}_{vw}\tilde{\textbf{x}}_{zu} \geq \tilde{\textbf{x}}_{vz}\tilde{\textbf{x}}_{wu}$
	without reducing the entropy, where~$\tilde{\textbf{x}}$ denotes the matching that arises from this process.
    As we show below, for these edges this inequality is true with much room to spare.
    
	Let~$r_{vw} := \frac{\textbf{x}_{vw}}{\eps^2 n^2}$.
	By redistributing weight~$r_{vw}$ as described above on the chosen 4-cycles,
    we can reduce~$\textbf{x}_{vw}$ to any arbitrary lower nonnegative weight.
    We reduce~$\textbf{x}_{vw}$ until $\textbf{x}_{vw}\leq\frac{b}{n}$.
    Recall that $|U'_z|=\eps n$ and thus~$\textbf{x}_{vz}$ is increased~$\eps n$ times for any $z\in Z'$. Note further that $|Z'|=\eps n$ and thus~$\textbf{x}_{wu}$ is increased at most~$\eps n$ times for any $u\in U'$.
	Thus, $\tilde{\textbf{x}}_{vz},\tilde{\textbf{x}}_{wu}\leq\frac{1}{\eps n}+\eps n r_{vw} \leq \frac{2}{\eps n}$. To such an edge we do not add weight in the process described above. Thus, no new edges with weight at least~$\frac{b}{n}$ arise.
 
	We turn to the next edge in~$M$ and continue the process described above until every edge $e\in E(B)$ satisfies $\tilde{\textbf{x}}_e\leq\frac{b}{n}$.
    According to Lemma~\ref{lemma: little weight}, we have $\sum_{e\in M}\textbf{x}_e \leq \frac{4n}{\log b}$.
    Thus, the maximum amount of weight by which an edge opposite to an edge in~$M$ is reduced is $\sum_{e\in M}r_e \leq \frac{4}{\eps^2n\log b}$.
    Thus, $\tilde{\textbf{x}}_{zu}\geq\frac{\lambda}{cn}-\frac{4}{\eps^2 n\log b} \geq \frac{\lambda^2}{n}$, as $\frac1b\ll\lambda\ll\frac1c\ll\eps$.

    Altogether, $\tilde{\textbf{x}}_{vw}\tilde{\textbf{x}}_{zu} \geq \tilde{\textbf{x}}_{vz}\tilde{\textbf{x}}_{wu}$ holds if $\tilde{\textbf{x}}_{vw}\frac{\lambda^2}{n} \geq (\frac{2}{\eps n})^2$, that is, if $\tilde{\textbf{x}}_{vw}\geq\frac{4}{\lambda^2\eps^2n}$.
    In particular, as $\frac{1}{b}\ll\lambda\ll\frac{1}{c}\ll\eps$, we can reduce weights such that $\tilde{\textbf{x}}_{vw} \leq \frac{b}{n}$ for each $vw\in M$.
    Thus, $\tilde{\textbf{x}}$ is \mbox{$b$-normal}.
\end{proof}

Suppose we remove a random-like set $M\subseteq V$ from an $(n,\eps)$-digraph with a $b$-normal perfect fractional matching. The following lemma essentially shows that~$G-M$ admits a $b'$-normal perfect fractional matching that only loses the expected amount of entropy and where~$b'$ is only slightly larger than~$b$.
\begin{lemma}\label{lemma: new matching}
    Suppose $\frac1n\ll\gamma\ll\frac1b\ll\eps$. Let~$G$ be an $(n,\eps)$-digraph that is an induced subgraph of some digraph~$F$.
    Let $u\in V(F)\setminus V(G)$ and
    let~$\textbf{x}$ be a $b$-normal perfect fractional matching of~$G$.
    Let $\Delta:= e^{\gamma\sqrt{\ln n}}$, let $n^\frac14\leq m\leq 3\Delta n^\frac14$, and let $M\subseteq V(G)$ be of size~$m+1$.
    Suppose that for all $v\in V(G)$ and $*\in\{+,-\}$ the following holds.
    \begin{enumerate}[label=\normalfont(\roman*)]
        \item $\sum_{w\in M\cap N_G^{+}(v)}\textbf{x}_{vw} = \frac{m+1}{n} \pm n^{-\frac34-\frac{1}{18\sqrt{\ln n}}}$
        and $\sum_{w\in M\cap N_G^{-}(v)}\textbf{x}_{wv} = \frac{m+1}{n} \pm n^{-\frac34-\frac{1}{18\sqrt{\ln n}}}$
        \item $\sum_{w\in M\cap N_G^{+}(v)}\textbf{x}_{vw}\log\frac{1}{\textbf{x}_{vw}} = \frac{m+1}{n}h_{\textbf{x}}^{+}(v) \pm n^{-\frac34-\frac{1}{18\sqrt{\ln n}}}$
        and \\$\sum_{w\in M\cap N_G^{-}(v)}\textbf{x}_{wv}\log\frac{1}{\textbf{x}_{wv}} = \frac{m+1}{n}h_{\textbf{x}}^{-}(v) \pm n^{-\frac34-\frac{1}{18\sqrt{\ln n}}}$
        \item $|N_{F,G}^{*}(u)|\geq (\frac12+\eps)n$ and
        \item $|N_{F,G}^{*}(u)\cap M|=\frac{m+1}{n}|N_{F,G}^{*}(u)|\pm n^{\frac14-\frac{1}{17\sqrt{\ln n}}}$.
    \end{enumerate}
    Then there is a $(1+n^{-\frac34-\frac{1}{22\sqrt{\ln n}}})b$-normal perfect fractional matching~$\textbf{z}$ of $F[(V(G)\setminus M)\cup\{u\}]$ that satisfies $h(\textbf{z}) \geq \frac{n-m}{n}h(\textbf{x}) - (n-m)\log\frac{n}{n-m} - n^{\frac14-\frac{1}{24\sqrt{\ln n}}}$.
\end{lemma}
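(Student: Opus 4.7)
The plan is to construct the matching $\textbf{z}$ essentially by restricting $\textbf{x}$ to $E(G-M)$, rescaling so each vertex in $V(G)\setminus M$ has row/column sum close to $1$, and using the edges incident to $u$ (together with a small auxiliary matching) to absorb the residual imbalances.

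First I would apply Lemma~\ref{lemma: c-normal matching} to $F':=F[(V(G)\setminus M)\cup\{u\}]$ to obtain a $c$-normal perfect fractional matching $\textbf{n}$ of $F'$, with $\tfrac1b\ll\tfrac1c\ll\eps$; this is legitimate because hypothesis~(iii) and $|M|=m+1=O(n^{1/4+o(1)})$ imply $F'$ is essentially an $(n-m,\tfrac{\eps}{2})$-digraph. Next, set $\beta:=\tfrac{n}{n-m-1}$. By hypothesis~(i), the rescaled weights $\beta\textbf{x}_{vw}$ on $E(G-M)$ yield row and column sums at each $v\in V(G)\setminus M$ equal to $\beta s_v^\pm=1-\tfrac{1}{n-m}\pm\beta\delta$ where $\delta:=n^{-3/4-1/(18\sqrt{\ln n})}$. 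I would then define $\textbf{z}:=(1-\mu)\textbf{y}+\mu\textbf{n}$, where $\mu\approx n^{-3/4-1/(20\sqrt{\ln n})}$ is a small mixing parameter and $\textbf{y}$ is a PFM of $F'$ obtained from $\beta\textbf{x}|_{E(G-M)}$ by (a)~locally balancing the row/column sums to the common value $1-\tfrac{1}{|D^\pm|}$ where $D^\pm:=N^\mp_{F,G}(u)\setminus M$, and (b)~assigning the uniform value $\tfrac{1}{|D^\pm|}$ to each $u$-edge $v^+u^-$ for $v\in D^-$ (resp.\ $u^+v^-$ for $v\in D^+$). The sizes $|D^\pm|$ are $(\tfrac12\pm o(1))n$ by hypotheses~(iii)--(iv), giving enough room for uniform assignment.

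The main obstacle will be step~(a): the per-vertex deviation $\pm\beta\delta$ can be of either sign and must be eliminated before the uniform $u$-edge assignment can balance $\textbf{y}$ exactly. To carry this out I would use $4$-cycle weight exchanges (in the spirit of Lemma~\ref{lemma: 4cycle}) inside the bipartite graph $B_{G-M}$: for any pair of vertices $v$ with excess and $v'$ with deficit, the bound $\delta(G-M)\geq(\tfrac12+\tfrac{\eps}{2})(n-m)$ guarantees $\Omega(n)$ common neighbours in $B_{G-M}$, hence many $4$-cycles through which a tiny signed weight ($\leq\beta\delta$) can be shifted without disturbing other row/column sums. Since the total weight to be shifted is $O(n\cdot\beta\delta)=O(n^{1/4-1/(18\sqrt{\ln n})})$, the shifts concern only a negligible fraction of the edge weights, and the resulting $\textbf{y}$ remains within the required normality range.

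Finally, for the entropy bound, I would use hypothesis~(ii) together with the bound $h_{\textbf{x}}^\pm(v)\leq\log(bn)$ from $b$-normality to show $h_{\textbf{x}}(E(G-M))\geq\tfrac{n-m-1}{n}h(\textbf{x})-O(n^{1/4+o(1)})$. Rescaling by $\beta$ then gives
\begin{equation*}
    h(\beta\textbf{x}|_{E(G-M)})=\beta\, h_{\textbf{x}}(E(G-M))-\beta\log\beta\cdot\sum_{e\in E(G-M)}\textbf{x}_e,
\end{equation*}
which after substituting $\sum_{e\in E(G-M)}\textbf{x}_e=(n-m)(n-m-1)/n\pm O(n^{1/4+o(1)})$ works out to $\tfrac{n-m}{n}h(\textbf{x})-(n-m)\log\tfrac{n}{n-m}\pm O(n^{1/4+o(1)})$, matching the target. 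The $4$-cycle balancing in step~(a) only increases entropy (by Lemma~\ref{lemma: 4cycle}), the convex combination with $\mu\textbf{n}$ loses at most $O(\mu n\log n)=o(n^{1/4-1/(24\sqrt{\ln n})})$ by concavity of $x\mapsto-x\log x$, and the $u$-edges contribute a negligible $O(\log n)$. The $(1+n^{-3/4-1/(22\sqrt{\ln n})})b$-normality follows from $\beta=1+n^{-3/4+o(1)}$ together with $c\ll b$, so that $\textbf{z}_e$ lies in $[\tfrac{1}{b'n},\tfrac{b'}{n}]$ for the claimed $b'$.
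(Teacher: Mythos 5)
Your overall skeleton (restrict $\textbf{x}$ to $G-M$, rescale, repair the small per-vertex imbalances using the many common neighbours guaranteed by the degree condition, and do the entropy bookkeeping via (i), (ii), (iv)) is the same as the paper's, but as written the construction does not actually produce a perfect fractional matching, and that is the heart of the lemma. First, only the vertices of $N^{-}_{F,G}(u)\setminus M$ (resp.\ $N^{+}_{F,G}(u)\setminus M$) receive a $u$-edge; since (iii) only guarantees $|N^{\pm}_{F,G}(u)|\geq(\frac12+\eps)n$, a constant fraction of the vertices of $V(G)\setminus M$ may be non-adjacent to $u$ and need internal row/column sum exactly $1$, not the ``common value'' $1-\frac{1}{|D^{\pm}|}$; balancing everything to that common value and then adding uniform $u$-edges leaves those vertices with total $1-\frac{1}{|D^{\pm}|}\neq1$. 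Second, your rescaling is off: with $\beta=\frac{n}{n-m-1}$, hypothesis (i) gives $\beta s_v^{\pm}=1\pm\beta\delta$, not $1-\frac{1}{n-m}\pm\beta\delta$, and more importantly a fixed, data-independent $\beta$ cannot make the total internal weight equal the sum of your targets exactly (it comes out roughly one unit too large, and even after correcting $\beta$ it matches only up to $o(1)$). Since the repair moves you describe conserve total weight, they can never finish the job exactly; this is why the paper first puts preliminary weights $1/|N^{\pm}_{F,G}(u)|$ on the $u$-edges, rescales \emph{everything} by the data-dependent factor $\lambda=n'/\sum_{e}\textbf{x}'_e$ so the total is exactly $n'$, and only then shifts. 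Relatedly, ``$4$-cycle exchanges in the spirit of Lemma~\ref{lemma: 4cycle}'' preserve every vertex sum and therefore cannot repair any imbalance; the operation you actually need (and verbally describe) is the shift along a path of length~$2$ between two vertices of the same partition class of $B'$, which is exactly the paper's mechanism. Finally, the convex combination with the auxiliary $c$-normal matching cannot ``absorb residual imbalances'' either: mixing a non-matching with a matching yields a non-matching, so this step buys nothing.

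There is also a quantitative gap in the entropy accounting. The final error term must be $n^{\frac14-\frac{1}{24\sqrt{\ln n}}}$, i.e.\ \emph{smaller} than $n^{\frac14}$, whereas several of your intermediate bounds carry $O(n^{1/4+o(1)})$ slack, which already exceeds this tolerance (note $m\log n$ can be as large as $3\Delta n^{\frac14}\log n\gg n^{\frac14-\frac{1}{24\sqrt{\ln n}}}$). In particular, hypothesis (ii) gives $h_{\textbf{x}}(E(G-M))\geq\bigl(1-\frac{2(m+1)}{n}\bigr)h(\textbf{x})-O\bigl(n^{\frac14-\frac{1}{20\sqrt{\ln n}}}\bigr)$, not $\frac{n-m-1}{n}h(\textbf{x})-O(n^{1/4+o(1)})$: every vertex of $M$ carries both in- and out-entropy, so the deduction is $\frac{2(m+1)}{n}h(\textbf{x})$, and the missing $\frac{m+1}{n}h(\textbf{x})$ is recovered only through the multiplication by $\lambda\approx1+\frac{m}{n}$ (leaving a harmless $O(m^2\log n/n)$ term), exactly as in the paper. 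With the corrected targets, a data-dependent normalisation, the length-$2$ shifts, and this tighter bookkeeping, your argument becomes the paper's proof; as it stands, the exactness of the matching and the size of the error terms are genuine gaps.
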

\begin{proof}
    Let $G':=F[(V(G)\setminus M)\cup\{u\}]$, let $n':=|V(G')|=n-m$, and let $B':=B_{G'}$ be the balanced bipartite graph on~$2n'$ vertices corresponding to~$G'$. We define an edge weighting~$\textbf{x}'$ on~$G'$ (and thus on~$B'$) by $\textbf{x}'_e:=\textbf{x}_e$ if $e\in E(G)$ and $\textbf{x}'_{uv}:=\frac{1}{|N_{F,G}^{+}(u)|}$ for any $v\in N_{F,G}^{+}(u)$ and $\textbf{x}'_{vu}:=\frac{1}{|N_{F,G}^{-}(u)|}$ for any $v\in N_{F,G}^{-}(u)$.
    
    Let us refer to the weight of a vertex as the sum of the weight of incident edges. Our proof strategy is as follows: We rescale the weights of the edges of~$B'$ such that they sum up to~$n'$, which is exactly the total weight of a perfect fractional matching of~$B'$. Then we redistribute weight from vertices with too much weight to vertices with too little weight, using that any two vertices in the same partition class of~$B'$ share many neighbours.
    In both steps, the entropy of the matching does not deviate too much from $\frac{n'}{n}h(\textbf{x})$.
    
    Define $\lambda:=\frac{n'}{\sum_{e\in E(B')}\textbf{x}'_e}$ and $\textbf{y}_e:=\lambda\textbf{x}'_e$ for $e\in E(B')$. Then $\sum_{e\in E(B')} \textbf{y}_e=n'$.
    Let $E(M,G):=\{vw\in E(G)\mid \text{$v\in M$ or $w\in M$}\}$.
    Note that
    \begin{align}\label{equation: sum x_e'}
        \sum_{e\in E(B')}\textbf{x}'_e = \sum_{e\in E(G)}\textbf{x}_e - \sum_{e\in E(M,G)} \textbf{x}_e + \sum_{v\in V(G)\setminus M}\textbf{x}'_{vu} + \textbf{x}'_{uv}.
    \end{align}
    By using (i) we obtain
    \begin{align*}
        \sum_{e\in E(M,G)} \textbf{x}_e
        &= \sum_{v\in M} \left(\sum_{w\in (V(G)\setminus M) \cap N_G^{+}(v)} \textbf{x}_{vw} + \sum_{w\in (V(G)\setminus M) \cap N_G^{-}(v)} \textbf{x}_{wv} + \sum_{w\in M\cap N_G^+(v)} \textbf{x}_{vw}\right) \\
        &= \sum_{v\in M}\left(1 + \sum_{w\in (V(G)\setminus M) \cap N_G^{-}(v)} \textbf{x}_{wv}\right)
        = (m+1)\left(2 - \frac{m+1}{n} \pm n^{-\frac34-\frac{1}{18\sqrt{\ln n}}}\right) \\
        &= 2(m+1) - \frac{(m+1)^2}{n} \pm mn^{-\frac34-\frac{1}{20\sqrt{\ln n}}}.
    \end{align*}
    Using $(V(G)\setminus M)\cap N^*_{F,G}(u) = N_{F,G}^*(u)\setminus(N_{F,G}^*(u)\cap M)$ for each $*\in\{+,-\}$ and applying~(iii) and~(iv) yields
    \begin{align*}
        \sum_{v\in V(G)\setminus M}\textbf{x}'_{uv}+\textbf{x}'_{vu} &= \frac{|(V(G)\setminus M)\cap N_{F,G}^{+}(u)|}{|N_{F,G}^{+}(u)|} + \frac{|(V(G)\setminus M)\cap N_{F,G}^{-}(u)|}{|N_{F,G}^{-}(u)|} \\
        &= 2\frac{n-(m+1)}{n} \pm \frac{n^{\frac14-\frac{1}{17\sqrt{\ln n}}}}{|N_{F,G}^{+}(u)|} \pm \frac{n^{\frac14-\frac{1}{17\sqrt{\ln n}}}}{|N_{F,G}^{-}(u)|} \\
        &= 2\frac{n-(m+1)}{n} \pm n^{-\frac34-\frac{1}{20\sqrt{\ln n}}},
    \end{align*}
    where the last inequality follows from $|N_{F,G}^+(u)|,|N_{F,G}^-(u)|\geq\frac{n}{2}$.
    Note that $\frac{m^2-1}{n} \geq (m+1)n^{-\frac34-\frac{1}{20\sqrt{\ln n}}}$, which is easily seen to be true by multiplying both sides of the inequality with~$n$ and using $m\geq n^\frac14$. Thus, \eqref{equation: sum x_e'} implies that
    \begin{align*}
        \sum_{e\in E(B')}\textbf{x}'_e &\geq n - 2(m+1) + \frac{(m+1)^2}{n} + 2\frac{n-(m+1)}{n} - (m+1)n^{-\frac34-\frac{1}{20\sqrt{\ln n}}}\\
        &= n - 2m + \frac{m^2-1}{n} - (m+1)n^{-\frac34-\frac{1}{20\sqrt{\ln n}}}
        \geq n-2m
    \end{align*}
    and
    \begin{align*}
        \sum_{e\in E(B')}\textbf{x}'_e &\leq n - 2(m+1) + \frac{(m+1)^2}{n} + 2\frac{n-(m+1)}{n} + (m+1)n^{-\frac34-\frac{1}{20\sqrt{\ln n}}} \\
        &= n-2m+\frac{m^2-1}{n} + (m+1)n^{-\frac34-\frac{1}{20\sqrt{\ln n}}}
        \leq n-2m+\frac{2m^2}{n}.
    \end{align*}
    Thus we have
    \begin{equation}\label{equation: bounds for lambda}
        1 + \frac{m-\frac{2m^2}{n}}{n-2m+\frac{2m^2}{n}} = \frac{n-m}{n-2m+\frac{2m^2}{n}} \leq \lambda \leq \frac{n-m}{n-2m} = 1 + \frac{m}{n-2m}.
    \end{equation}

    In what follows, we redistribute weights of~$\textbf{y}$ such that the edge weighting~$\textbf{z}$ which arises from this process is a perfect fractional matching.

    We proceed by induction on the number of vertices $v\in V(B')$ where $\sum_{v'\in N_{B'}(v)}\textbf{y}_{vv'}\neq 1$.
    If there is no such vertex, then~$\textbf{y}$ is a perfect fractional matching of~$B'$.
    Otherwise, there is a vertex $v\in V(B')$ with $\sum_{v'\in N_{B'}(v)}\textbf{y}_{vv'} = 1 + x_1$ and $x_1>0$ and a vertex $w\in V(B')$ in the same partition class as~$v$ such that $\sum_{w'\in N_{B'}(w)}\textbf{y}_{ww'} = 1 - x_2$ where $x_2 > 0$.\COMMENT{As the average weight is 1.} 
    \textcolor{black}{Observe that~$B'$ satisfies $\delta(B')\geq (\frac12+\frac{\eps}{2})n' \geq \frac{n'}{2}+\frac{\eps n}{4}$. In particular, we have $|N_{B'}(v)\cap N_{B'}(w)|\geq \frac{\eps n}{2}$.
    For each $z\in N_{B'}(v)\cap N_{B'}(w)$, we can redistribute weight from $\sum_{v'\in N_{B'}(v)}\textbf{y}_{vv'}$ to $\sum_{w'\in N_{B'}(w)}\textbf{y}_{ww'}$ by adding it to $\textbf{y}_{zw}$ and subtracting it from~$\textbf{y}_{vz}$. In this case we say we shift weight from~$vz$ to~$zw$.
    We distribute weight $\min\{x_1,x_2\}$ on paths of length~2 between~$v$ and~$w$
    Let $Z\subseteq N_{B'}(v)\cap N_{B'}(w)$ such that $|Z|=\frac{\eps n}{2}$.}
    For each $z\in Z$, we add weight~$\frac{2\min\{x_1,x_2\}}{\eps n}$ to~$\textbf{y}_{zw}$ and subtract it from~$\textbf{y}_{vz}$ (we will later see that this change is very small and it is indeed possible to distribute weight like this).
    In this case we say that~$zw$ gets weight from~$v$ and~$w$ and~$vz$ loses weight from~$v$ and~$w$.
    Call the edge weighting which arises from this process~$\tilde{\textbf{y}}$.
    Now we either have $\sum_{v'\in N_{B'}(v)}\tilde{\textbf{y}}_{vv'} = 1$ or $\sum_{w'\in N_{B'}(w)}\tilde{\textbf{y}}_{ww'} = 1$ and thus $|\{v\in V(B')\mid \sum_{v'\in N_{B'}(v)}\tilde{\textbf{y}}_{vv'}\neq 1| \leq |\{v\in V(B')\mid \sum_{v'\in N_{B'}(v)}{\textbf{y}}_{vv'}\neq 1|-1$.
    Using our induction hypothesis, we can turn~$\tilde{\textbf{y}}$ into a perfect fractional matching~$\textbf{z}$ such that $\sum_{v'\in N_{B'}(v)} \textbf{z}_{vv'} = 1$ for each $v\in V(B')$.
        
    Assume $vz\in E(B')$ is used in the process above to distribute edge weights. Note that $\textbf{y}_{vz}$ is only changed as long as \mbox{$\sum_{v'\in N_{B'}(v)} \textbf{y}_{vv'}\neq1$} or $\sum_{z'\in N_{B'}(z)}\textbf{y}_{zz'}\neq1$.
    Suppose $\sum_{v'\in N_{B'}(v)}\textbf{y}_{vv'}=1\pm x_1$ for some $x_1>0$, $\sum_{w'\in N_{B'}(w)}\textbf{y}_{ww'}=1\pm x_2$ for some $x_2>0$, and we distribute weight from~$v$ to~$w$. If $x_1<x_2$, we obtain $\tilde{\textbf{y}}_{vz}=\textbf{y}_{vz}\pm \frac{2x_1}{\eps n}$ for any $z\in Z$. If $x_1\geq x_2$, we obtain $\tilde{\textbf{y}}_{vz}=\textbf{y}_{vz}\pm \frac{2x_2}{\eps n}$ for any $z\in Z$, and we have $\sum_{v'\in N_{B'}(v)}\tilde{\textbf{y}}_{vv'} = 1\pm (x_1-x_2)$.
    Thus the maximum amount of weight that~$vz$ can obtain from~$v$ is~$\frac{2x_1}{\eps n}$.
    Hence, denoting $y_1:=|\sum_{z'\in N_{B'}(z)}\textbf{y}_{zz'}-1|$, 
    we have $\textbf{z}_{vz}=\textbf{y}_{vz}\pm 2\cdot \frac{2\max\{x_1,y_1\}}{\eps n}$.
    
    \textcolor{black}{Let $x:=\max_{v\in V(B')}|\sum_{v'\in N_{B'}(v)}\textbf{y}_{vv'}-1|$. We bound~$x$ from above. Let $v\in V(G)\setminus M$ and $v^+,v^-$ be its corresponding vertices in~$B'$.
    Note that
    \begin{align*}
        1 = \sum_{w\in N_G^+(v)\cap M} \textbf{x}_{vw} + \sum_{w\in N_G^+(v)\setminus M} \textbf{x}_{vw} = \sum_{w\in N_G^+(v)\cap M} \textbf{x}_{vw} + \sum_{v'\in N_{B'}(v^+)} \textbf{x}'_{v^+v'} - \textbf{x}'_{vu}.
    \end{align*}
    By (i) we have
    \begin{align*}
        \sum_{v'\in N_{B'}(v^+)} \textbf{x}'_{v^+v'} = 1 - \sum_{w\in N_G^{+}(v)\cap M} \textbf{x}_{vw} + \textbf{x}'_{vu} \leq 1-\frac{m+1}{n}+n^{-\frac34-\frac{2}{37\sqrt{\ln n}}}.
    \end{align*}
    Using~\eqref{equation: bounds for lambda}, this yields
    \begin{align*}
        \sum_{v'\in N_{B'}(v^+)}\textbf{y}_{v^+v'} = \lambda\sum_{v'\in N_{B'}(v^+)} \textbf{x}'_{v^+v'} \leq 1 + \frac{2m^2}{n(n-2m)} + 2n^{-\frac34-\frac{2}{37\sqrt{\ln n}}} \leq 1 + n^{-\frac34-\frac{1}{19\sqrt{\ln n}}},
    \end{align*}
    and the analogous inequality holds for~$v^-$.%
    \COMMENT{
            we have
            \begin{align*}
                \lambda\sum_{v'\in N_{B'}(v^+)} \textbf{x}'_{v^+v'}
                &\leq \left(1+\frac{m}{n-2m}\right)\left(1-\frac{m+1}{n}+n^{-\frac34-\frac{2}{37\sqrt{\ln n}}}\right) \\
                &= 1-\frac{m+1}{n}+n^{-\frac34-\frac{2}{37\sqrt{\ln n}}} + \frac{m}{n-2m} - \frac{m+1}{n}\frac{m}{n-2m} + \frac{m}{n-2m}n^{-\frac34-\frac{2}{37\sqrt{\ln n}}} \\
                &\leq 1 -\frac{m}{n} + \frac{m}{n-2m} + 2n^{-\frac34-\frac{2}{37\sqrt{\ln n}}}
                = 1 + \frac{2m^2}{n(n-2m)} + 2n^{-\frac34-\frac{2}{37\sqrt{\ln n}}} \\
                &\leq 1 + 3n^{-\frac34-\frac{2}{37\sqrt{\ln n}}}
                \leq 1 + n^{-\frac34-\frac{1}{19\sqrt{\ln n}}},
            \end{align*}
            (if we subtract less in the exponent, the term gets larger)
    }
    Similarly we obtain
    \begin{align*}
        \sum_{v'\in N_{B'}(v^+)}\textbf{y}_{v^+v'} \geq 1 - n^{-\frac34-\frac{1}{19\sqrt{\ln n}}}
    \end{align*}
    and the analogous inequality for~$v^-$.%
    \COMMENT{
            we have
            \begin{align*}
                \lambda \sum_{v'\in N_{B'}(v^+)} \textbf{x}'_{v^+v'}
                &\geq \left(1+\frac{m-\frac{2m^2}{n}}{n-2m+\frac{2m^2}{n}}\right) \left(1-\frac{m+1}{n}-n^{-\frac34-\frac{2}{37\sqrt{\ln n}}}\right) \\
                &\geq 1 - \frac{m+1}{n} - 2n^{-\frac34-\frac{2}{37\sqrt{\ln n}}} + \frac{m-\frac{2m^2}{n}}{n-2m+\frac{2m^2}{n}} - \frac{m+1}{n}\frac{m-\frac{2m^2}{n}}{n-2m+\frac{2m^2}{n}} \\
                &\geq 1 - 3n^{-\frac34-\frac{2}{37\sqrt{\ln n}}} - \frac{(m+1)n-2m(m+1)+\frac{2m^2(m+1)}{n} - nm + 2m^2}{n^2-2mn+2m^2} \\
                &= 1 - 3n^{-\frac34-\frac{2}{37\sqrt{\ln n}}} - \frac{n-2m+\frac{2m^3+2m^2}{n}}{n^2-2mn+2m^2}
                \geq 1 - 4n^{-\frac34-\frac{2}{37\sqrt{\ln n}}}
                \geq 1 - n^{-\frac34-\frac{1}{19\sqrt{\ln n}}}.
            \end{align*}
    }
    Further, by~(iv) we have
    \begin{align*}
        \sum_{v'\in N_{B'}(u^+)}\textbf{x}'_{u^+v'} = \frac{1}{|N_{F,G}^+(u)|}\left(\frac{n-(m+1)}{n}|N_{F,G}^+(u)| \pm n^{\frac14-\frac{1}{17\sqrt{\ln n}}}\right) = 1 - \frac{m+1}{n} \pm n^{-\frac34-\frac{1}{18\sqrt{\ln n}}}.
    \end{align*}
    As in the calculations above, we obtain $\sum_{v'\in N_{B'}(u^+)}\textbf{y}_{u^+v'} = 1 \pm n^{-\frac34-\frac{1}{19\sqrt{\ln n}}}$ and the same inequality holds for~$u^-$.
    Thus, for each $v\in V(B')$, we need to redistribute at most weight $n^{-\frac34-\frac{1}{19\sqrt{\ln n}}}$ to achieve $\sum_{v'\in N_{B'}(v)}\textbf{y}_{vv'}=1$, and every edge weight is changed by at most~$4\eps^{-1}n^{-\frac74-\frac{1}{19\sqrt{\ln n}}} \leq n^{-\frac74-\frac{1}{20\sqrt{\ln n}}}$.} Hence,
    \begin{equation}\label{equation: z_e}
        \textbf{z}_e = \textbf{y}_e \pm n^{-\frac74-\frac{1}{20\sqrt{\ln n}}}
    \end{equation}
    for each $e\in E(B')$.

    We now bound the change of entropy in this process from above. Assume that~$e,e'\in E(B')$ are used to redistribute weight, and assume that weight~$\alpha\leq n^{-\frac74-\frac{1}{20\sqrt{\ln n}}}$ is subtracted from~$\textbf{y}_e$ and added to~$\textbf{y}_{e'}$.
    Define $\mu_1:=1-\frac{\alpha}{\textbf{y}_e}$ and $\mu_2:=1+\frac{\alpha}{\textbf{y}_{e'}}$.
    Then $\textbf{y}_e\log\frac{1}{\mu_1}\geq 0$, and, using $\log (1+x) \leq 2x$ for $x\geq0$, we obtain $\textbf{y}_{e'}\log\frac{1}{\mu_2} = -\textbf{y}_{e'}\log\mu_2 \geq -2\alpha$. As $\textbf{y}_e-\alpha\geq\frac{\textbf{y}_e}{2}$ and $\textbf{y}_{e'}+\alpha \leq 2\textbf{y}_{e'}$, we also have
    \begin{align*}
        -\left(\log\frac{1}{\textbf{y}_e-\alpha}-\log\frac{1}{\textbf{y}_{e'}+\alpha}\right)
        &\geq\log\textbf{y}_e - \log\textbf{y}_{e'} - 2
        \geq \log \frac{\lambda}{bn} - \log\frac{\lambda b}{n} - 2
        \geq -3\log b.
    \end{align*}
    This yields $\textbf{y}_e\log\frac{1}{\mu_1} + \textbf{y}_{e'}\log\frac{1}{\mu_2} - \alpha(\log\frac{1}{\textbf{y}_e-\alpha}-\log\frac{1}{\textbf{y}_{e'}+\alpha}) \geq -5\alpha \log b \geq -n^{-\frac74-\frac{1}{21\sqrt{\ln n}}}$.
    The entropy of $e,e'$ hence satisfies
    \begin{align*}
        h_{\textbf{z}}(\{e, e'\})
        &= (\textbf{y}_e-\alpha)\log\frac{1}{\textbf{y}_e-\alpha} + (\textbf{y}_{e'}+\alpha)\log\frac{1}{\textbf{y}_{e'}+\alpha} \\
        &= \textbf{y}_e\log\frac{1}{\mu_1\textbf{y}_e} + \textbf{y}_{e'}\log\frac{1}{\mu_2\textbf{y}_{e'}} - \alpha\left(\log\frac{1}{\textbf{y}_e-\alpha}-\log\frac{1}{\textbf{y}_{e'}+\alpha}\right) \\
        &\geq \textbf{y}_e\log\frac{1}{\textbf{y}_e} + \textbf{y}_{e'}\log\frac{1}{\textbf{y}_{e'}}
        -n^{-\frac74-\frac{1}{21\sqrt{\ln n}}}.
    \end{align*}
    We shift weight from one edge to another at most $2n\cdot\frac{\eps n}{2}=\eps n^2$ times.
    Using $h(\textbf{y}) = \sum_{e\in E(B')} \lambda\textbf{x}'_e\log\frac{1}{\lambda\textbf{x}'_e} = \lambda\sum_{e\in E(B')} \textbf{x}'_e\log\frac{1}{\textbf{x}'_e} - \sum_{e\in E(B')}\lambda\textbf{x}'_e\log\lambda = \lambda\sum_{e\in E(B')} \textbf{x}'_e\log\frac{1}{\textbf{x}'_e} - n'\log\lambda$, this yields
    \begin{align}\label{equation: h(z)}
        h(\textbf{z})
        &\geq h(\textbf{y}) - n^{\frac14-\frac{1}{21\sqrt{\ln n}}}
        = \lambda \sum_{e\in E(B')}\textbf{x}'_e\log\frac{1}{\textbf{x}'_e} - n' \log\lambda - n^{\frac14-\frac{1}{21\sqrt{\ln n}}}.
    \end{align}
    Note that by~\eqref{equation: bounds for lambda}, $\lambda \leq \frac{n-m}{n-2m} = \frac{n}{n'}(1+\frac{m^2}{n^2-2mn}) \leq \frac{n}{n'}(1+\frac{2m^2}{n'^2})$ and thus $\log\lambda \leq \log\frac{n}{n'} + \log(1+\frac{2m^2}{n'^2}) \leq \log\frac{n}{n'} + \frac{4m^2}{n'^2}$. Combining this with~\eqref{equation: h(z)} yields
    \begin{align}\label{equation: h(z) the second}
        h(\textbf{z})
        &\geq \lambda \sum_{e\in E(B')}\textbf{x}'_e\log\frac{1}{\textbf{x}'_e} - n' \log\frac{n}{n'} - n^{\frac14-\frac{1}{22\sqrt{\ln n}}}.
    \end{align}
    We bound $\sum_{e\in E(B')}\textbf{x}'_e\log\frac{1}{\textbf{x}'_e}$ in terms of~$h(\textbf{x})$.
    Recall that $E(M,G):=\{vw\in E(G)\mid \text{$v\in M$ or $w\in M$}\}$. Using (ii) we obtain
    \begin{align*}
        \sum_{v\in V(G)}\sum_{w\in M\cap N_G^+(v)} \textbf{x}_{vw}\log\frac{1}{\textbf{x}_{vw}} = \frac{m+1}{n}\sum_{v\in V(G)}h_{\textbf{x}}^+(v) \pm n^{\frac14-\frac{1}{18\sqrt{\ln n}}}
        = \frac{m}{n}h(\textbf{x}) \pm n^{\frac14-\frac{1}{19\sqrt{\ln n}}}
    \end{align*}
    and the analogous inequality for~$N_G^-(v)$ and~$h_{\textbf{x}}^-(v)$. Thus,
    \begin{align*}
        h_{\textbf{x}}(E(M,G)) &\leq \sum_{v\in V(G)} \left(\sum_{w\in M\cap N_G^+(v)} \textbf{x}_{vw}\log\frac{1}{\textbf{x}_{vw}} + \sum_{w\in M\cap N_G^-(v)} \textbf{x}_{wv}\log\frac{1}{\textbf{x}_{wv}}\right) \\
        &= 2\frac{m}{n}h(\textbf{x}) \pm n^{\frac14-\frac{1}{20\sqrt{\ln n}}}.
    \end{align*}
    This yields
    \begin{align}\label{equation: sum e in B'}
        \sum_{e\in E(B')} \textbf{x}'_e\log\frac{1}{\textbf{x}'_e}
        &\geq h(\textbf{x}) - h_{\textbf{x}}(E(M,G))
        \geq \frac{n'}{n}h(\textbf{x}) - \frac{m}{n}h(\textbf{x}) - n^{\frac14-\frac{1}{20\sqrt{\ln n}}}.
    \end{align}
    By combining~\eqref{equation: h(z) the second} and~\eqref{equation: sum e in B'} and using $\lambda\geq 1 + (m-\frac{2m^2}{n})/(n-2m+\frac{2m^2}{n})$ (see~\eqref{equation: bounds for lambda}) and $h(\textbf{x}) \leq n\log n$, we obtain
    \begin{align*}
        h(\textbf{z})
        &\geq \left(1 + \frac{m-\frac{2m^2}{n}}{n-2m+\frac{2m^2}{n}}\right) \left(\frac{n'}{n} h(\textbf{x}) - \frac{m}{n}h(\textbf{x})\right) - n'\log\frac{n}{n'} - n^{\frac14-\frac{1}{23\sqrt{\ln n}}} \\
        &\geq \frac{n'}{n}h(\textbf{x}) - \frac{4m^2}{n^2}h(\textbf{x}) - n'\log\frac{n}{n'} - n^{\frac14-\frac{1}{23\sqrt{\ln n}}} \\
        &\geq \frac{n'}{n}h(\textbf{x}) -n'\log\frac{n}{n'} - n^{\frac14-\frac{1}{24\sqrt{\ln n}}},
    \end{align*}
    as desired.
    
    Next we show that~$\textbf{z}$ is $b'$-normal for some suitable~$b'$. We need to choose~$b'$ such that $\textbf{z}_e \leq \frac{b'}{n'}$ for each $e\in E(B')$. 
    Let $\alpha := n^{-\frac74-\frac{1}{20\sqrt{\ln n}}}$.
    Note that trivially $\frac{1}{bn} \leq \textbf{x}_{uv},\textbf{x}_{wu}\leq\frac{b}{n}$ for any $v,w\in V(G)$ such that $uv,vw\in E(G)$.
    Thus, $\textbf{z}_e\leq\frac{\lambda b}{n}+\alpha$ by~\eqref{equation: z_e}, and we need to choose~$b'$ to satisfy $b'\geq \frac{\lambda bn'}{n} + \alpha n'$.
    Let $\zeta := n^{-\frac34-\frac{1}{22\sqrt{\ln n}}}$ and $b':=(1+\zeta)b$.
    Note that $\lambda+\alpha n \leq 1 + \frac{m}{n-2m} + n^{-\frac34-\frac{1}{20\sqrt{\ln n}}} \leq 1+\frac{m + \zeta n}{n-m}$.%
    \COMMENT{
        we have $
            m^2 + (n-m)(n-2m)n^{-\frac34-\frac{1}{20\sqrt{\ln n}}}
            \leq m^2 + n^{\frac54-\frac{1}{20\sqrt{\ln n}}}
            \leq 2n^{\frac54-\frac{1}{20\sqrt{\ln n}}}
            \leq n^{\frac54-\frac{1}{21\sqrt{\ln n}}}
            \leq \zeta n(n-2m)
        $
        thus
        $
            m(n-m) + (n-m)(n-2m)n^{-\frac34-\frac{1}{20\sqrt{\ln n}}} \leq (m+\zeta n)(n-2m)
        $
        which yields the result by dividing through $(n-m)(n-2m)$.
    }
    Then \begin{align*}
        \frac{\lambda b n' + \alpha nn'}{n}
        \leq \frac{(\lambda+\alpha n)n'}{n} b
        \leq \frac{(1+\frac{m+\zeta n}{n'})n'}{n}b
        = (1+\zeta)b
        = b'.
    \end{align*}
    
    Note that~$b'$ must also satisfy $\textbf{z}_e \geq \frac{1}{b'n'}$ for each $e\in E(B')$. As $\textbf{z}_e \geq \frac{\lambda}{bn} - \alpha$ by~\eqref{equation: z_e}, we need to choose~$b'$ such that $b'\geq\frac{1}{(\frac{\lambda}{bn}-\alpha)n'}$. Note that $(\lambda-\alpha bn)(n-m) \geq n-n^{\frac14-\frac{1}{21\sqrt{\ln n}}}$.%
    \COMMENT{
        As $\lambda\geq 1+\frac{m-\frac{2m^2}{n}}{n-2m+\frac{2m^2}{n}} \geq 1 + \frac{m-\frac{2m^2}{n}}{n}$, we have
        \begin{align*}
            \lambda(n-m)
            &\geq \left(1+\frac{m-\frac{2m^2}{n}}{n}\right)(n-m)
            \geq n - \frac{3m^2}{n}
        \end{align*}
        and thus $(\lambda-\alpha bn)n' \geq n - \frac{3m^2}{n} - bn^{\frac14-\frac{1}{20\sqrt{\ln n}}} \geq n - n^{\frac14-\frac{1}{21\sqrt{\ln n}}}$.
    }
    Hence%
    \COMMENT{
        as $\zeta \geq (1+\zeta)n^{-\frac34-\frac{1}{21\sqrt{\ln n}}}$, we have
            $
                (1-n^{-\frac34-\frac{1}{21\sqrt{\ln n}}})(1+\zeta)
                = 1-n^{-\frac34-\frac{1}{21\sqrt{\ln n}}}+\zeta -\zeta n^{-\frac34-\frac{1}{21\sqrt{\ln n}}}
                \geq 1.
            $
    }
    \begin{align*}
        \frac{1}{(\frac{\lambda}{bn}-\alpha)n'}
        = \frac{n}{(\lambda - \alpha bn)n'}b
        \leq \frac{1}{1-n^{-\frac34-\frac{1}{21\sqrt{\ln n}}}}b
        \leq (1+\zeta) b
        = b'.
    \end{align*}
    
    Altogether we obtain that~$\textbf{z}$ is $b'$-normal with \mbox{$b'\leq(1+n^{-\frac34-\frac{1}{22\sqrt{\ln n}}})b$}.
\end{proof}

The next lemma shows that if $A\subseteq V(G)$ is not too large, the graph~$G-A$ has a $b$-normal perfect fractional matching whose entropy is only slightly less than~$h(G)$.
\begin{lemma}\label{lemma: matching G-A and G}
    Suppose $\frac1n\ll\frac{1}{b}\ll\eps$. Let~$G$ be an $(n,\eps)$-digraph and suppose $A\subseteq V(G)$ satisfies $|A|\leq \frac{n}{\log^2 n}$.
    Then there is a $b$-normal perfect fractional matching~$\textbf{z}$ of~$G-A$ that satisfies $h(\textbf{z})\geq h(G)-\eps n$.
\end{lemma}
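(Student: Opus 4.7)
The plan is to convert a high-entropy perfect fractional matching of $G$ into one of $G-A$ via restriction, rescaling and weight redistribution, losing only $o(\eps n)$ entropy along the way, and then apply Theorem~\ref{theorem: almost maximum matching} once more to make the resulting matching $b$-normal. Introduce auxiliary constants $\frac{1}{n}\ll\frac{1}{b}\ll\frac{1}{b_1}\ll\eps$. Since $h(G)\geq n\log\delta^0(G)\geq n\log(n/2)+\eps n$ by \cite[Theorem~3.1]{CK:09}, Theorem~\ref{theorem: almost maximum matching} applied to~$G$ produces a $b_1$-normal perfect fractional matching~$\textbf{x}$ of~$G$ with $h(\textbf{x})\geq h(G)-\frac{\eps}{4}n$.

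Set $G':=G-A$ and $n':=n-\abs{A}$; then $G'$ is an $(n',\eps/2)$-digraph. Restrict~$\textbf{x}$ to $E(G')$ to obtain~$\textbf{x}'$. The discarded edges have total $\textbf{x}$-weight at most $2\abs{A}$ and cardinality at most $2\abs{A}n$, so Jensen's inequality yields $h_{\textbf{x}}(E(G)\setminus E(G'))\leq 2\abs{A}\log n\leq 2n/\log n=o(\eps n)$. Next rescale by $\lambda:=n'/\sum_{e\in E(G')}\textbf{x}_e=1+O(\abs{A}/n)$; since $\log\lambda=O(\abs{A}/n)$, this changes the entropy by at most $n'\log\lambda=o(\eps n)$. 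After rescaling, each vertex $v\in V(G')$ has out-weight $\lambda(1-\delta_v^+)$ with $\delta_v^+\leq b_1\abs{A}/n$, and similarly for in-weight, so every vertex weight deviates from~$1$ by $O(\abs{A}/n)$.

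Now redistribute weights in the bipartite double-cover $B_{G'}$ along 2-paths through common neighbours, exactly as in the proof of Lemma~\ref{lemma: new matching}. Since $\delta(B_{G'})\geq(\frac{1}{2}+\frac{\eps}{4})n'$, any two vertices in the same partition class share at least $\frac{\eps}{4}n$ common neighbours, so the per-vertex imbalance of order $\abs{A}/n$ can be spread over $\Omega(n)$ edges, shifting each involved edge by $O(\abs{A}/n^2)$. Because $\textbf{x}'_e\geq 1/(b_1 n)$, bounding the entropy cost of each shift by the mean-value argument used in Lemma~\ref{lemma: new matching} shows that each of the at most~$n'$ rebalancing steps loses at most $O(\abs{A}\log n/n)$ in entropy, for a total loss of $O(\abs{A}\log n)=o(\eps n)$. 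Call the resulting perfect fractional matching of $G'$ by~$\textbf{y}$; combining the bounds gives $h(\textbf{y})\geq h(\textbf{x})-\frac{\eps}{4}n\geq h(G)-\frac{\eps}{2}n$.

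Finally, $h(\textbf{y})\geq h(G)-\frac{\eps}{2}n\geq n'\log(n'/2)+\frac{\eps}{4}n'$, so Theorem~\ref{theorem: almost maximum matching} applied to~$G'$ with starting matching~$\textbf{y}$ delivers a $b$-normal perfect fractional matching~$\textbf{z}$ of $G-A$ with $h(\textbf{z})\geq h(\textbf{y})-\frac{\eps}{4}n'\geq h(G)-\eps n$. The main technical hurdle is the bookkeeping in the third paragraph: tracking entropy through rescaling by $1+O(\abs{A}/n)$ and through the common-neighbour redistribution. However, because we only require an error term of $\eps n$ rather than the much sharper $n^{1/4-o(1)}$-type bounds of Lemma~\ref{lemma: new matching}, and because the $b_1$-normality of~$\textbf{x}$ keeps all edge weights within a constant factor of $1/n$, the estimates are considerably gentler than those in Lemma~\ref{lemma: new matching}.
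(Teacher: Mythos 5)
Your proposal is correct and follows the same backbone as the paper's proof: take a normal, high-entropy perfect fractional matching of~$G$ from Theorem~\ref{theorem: almost maximum matching}, restrict it to $G-A$, rescale, and rebalance vertex weights along paths of length~$2$ in $B_{G-A}$, with the entropy losses (discarded edges $\leq 2|A|\log n$, rescaling $O(|A|)$, redistribution $O(|A|\log n)$) all of order $o(\eps n)$ because $|A|\leq n/\log^2 n$; this is exactly the paper's bookkeeping. The one genuine difference is how $b$-normality is obtained: the paper starts from a $\frac{b}{2}$-normal matching and observes that every edge weight is perturbed by at most $\frac{2|A|b}{\eps n^2}\ll\frac{1}{bn}$, so the rebalanced matching is already $b$-normal, whereas you do not track normality through the perturbation and instead invoke Theorem~\ref{theorem: almost maximum matching} a second time, now on $G-A$. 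That second application is legitimate: $G-A$ is still an $(n',\frac{\eps}{4})$-digraph since $|A|=o(\eps n)$, your intermediate matching satisfies $h(\textbf{y})\geq n\log\frac{n}{2}+\frac{\eps}{2}n\geq n'\log\frac{n'}{2}+\frac{\eps}{4}n'$, and the hierarchy $\frac{1}{n'}\ll\frac{1}{b}\ll\frac{\eps}{4}$ persists, so you lose only a further $\frac{\eps}{4}n'$ of entropy and land at $h(\textbf{z})\geq h(G)-\eps n$. Your route trades the paper's (easy) normality-tracking for a second use of the heavier theorem; it is marginally less economical but avoids having to verify that the redistribution preserves normality, and both yield the stated bound.
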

\begin{proof}
    Note that $h(G)\geq n\log\delta^0(G) \geq n\log\frac{n}{2}+\eps^2n$ (see~\cite[Theorem~3.1]{CK:09}).
    According to Theorem~\ref{theorem: almost maximum matching}, the digraph~$G$ has a $\frac{b}{2}$-normal perfect fractional matching~$\textbf{x}$ that satisfies $h(\textbf{x})\geq h(G)-\eps^2 n$.
    We show that we can modify~$\textbf{x}$ to obtain a $b$-normal perfect fractional matching~$\textbf{z}$ that satisfies $h(\textbf{z})\geq h(\textbf{x})-3|A|\log n$, which implies the desired result. To this end, we follow a similar (but simpler) strategy to Lemma~\ref{lemma: new matching}. 
    
    Define $a:=|A|$ and $n':=|V(G)\setminus A|$. Let $B':=B_{G-A}$ be the balanced bipartite graph on~$2n'$ vertices corresponding to~$G-A$.
    Let $\lambda:=\frac{n'}{\sum_{e\in E(B')}\textbf{x}_e}$ and $\textbf{y}_e:=\lambda\textbf{x}_e$ for $e\in E(B')$. Then $\sum_{e\in E(B')} \textbf{y}_e=n'$.
    Let~$E(A,G):=\{vw\in E(G)\mid \text{$v\in A$ or $w\in A$}\}$.
    Note that $a\leq \sum_{e\in E(A,G)}\textbf{x}_e \leq 2a$ and hence $n-2a \leq \sum_{e\in E(B')} \textbf{x}_e\leq n-a$.
    Thus, $1\leq \lambda\leq\frac{n-a}{n-2a} = 1+\frac{a}{n-2a}$.

    Analogously to Lemma~\ref{lemma: new matching}, we redistribute weights on edges of~$B'$ to obtain a perfect fractional matching~\textbf{z} by shifting weights on paths of length~2 between vertices~$v\in V(B')$ that satisfy $\sum_{v'\in N_{B'}(v)}\textbf{y}_{vv'}>1$ and vertices $w\in V(B')$ that satisfy $\sum_{w'\in N_{B'}(w)}\textbf{y}_{ww'}<1$.
    As in Lemma~\ref{lemma: new matching}, we can always choose~$\frac{\eps n}{2}$ paths of length~2 between two such vertices to redistribute weights, and thus, $\textbf{z}$ satisfies $\textbf{z}_{e}=\textbf{y}_{e} \pm \frac{4x}{\eps n}$ for each $e\in E(B')$, where $x:=\max_{v\in V(B')}{|\sum_{v'\in N_{B'}(v)}\textbf{y}_{vv'}-1|}$.
    
    We bound~$x$ from above. Let $v\in V(G)\setminus A$ and $v^+, v^-$ be its corresponding vertices in~$V(B')$.
    \textcolor{black}{As~$\textbf{x}$ is $\frac{b}{2}$-normal, we have
    \begin{align*}
        1=\sum_{w\in (V(G)\setminus A)\cap N_{G}^+(v)}\textbf{x}_{vw} + \sum_{w\in A\cap N_G^+(v)} \textbf{x}_{vw} \leq \sum_{v'\in N_{B'}(v^+)}\textbf{x}_{v^+v'} + \frac{ab}{2n},
    \end{align*}
    and the analogous inequality holds for~$v^-$.}
    Thus, $1-\frac{ab}{2n} \leq \sum_{v'\in N_{B'}(v)}\textbf{x}_{vv'} \leq 1$. Using $1\leq\lambda\leq 1+\frac{a}{n-2a}$ yields $1-\frac{ab}{2n}\leq \sum_{v'\in N_{B'}(v)}\textbf{y}_{vv'}\leq 1+\frac{a}{n-2a} \leq 1+\frac{ab}{2n}$.
    Thus, for each $v\in V(B')$, we need to redistribute at most weight $\frac{ab}{2n}$ to achieve $\sum_{v'\in N_{B'}(v)}\textbf{y}_{vv'}=1$, and in this process every edge weight is changed by at most~$\frac{2ab}{\eps n^2}$. Hence,
    \begin{equation*}
        \textbf{z}_e = \textbf{y}_e \pm \frac{2ab}{\eps n^2}
    \end{equation*}
    for each $e\in E(B')$. As $\frac1n\ll\frac1b\ll\eps$, we conclude that $\textbf{z}$ is a $b$-normal perfect fractional matching.

    We now bound the change of entropy in this process from above. Assume that~$e,e'\in E(B')$ are used to redistribute weight, and assume that weight~$\alpha\leq \frac{2ab}{\eps n^2}$ is subtracted from~$\textbf{y}_e$ and added to~$\textbf{y}_{e'}$.
    Analogously to Lemma~\ref{lemma: new matching} we obtain
    \begin{align*}
        h_{\textbf{z}}(\{e, e'\})
        &\geq \textbf{y}_e\log\frac{1}{\textbf{y}_e} + \textbf{y}_{e'}\log\frac{1}{\textbf{y}_{e'}}
        -5\alpha\log b
        \geq \textbf{y}_e\log\frac{1}{\textbf{y}_e} + \textbf{y}_{e'}\log\frac{1}{\textbf{y}_{e'}}
        - \frac{ab^2}{\eps n^2}.
    \end{align*}
    We shift at most~$2n\cdot\frac{\eps n}{4}\leq\eps n^2$ times weight from one edge to another.
    This yields
    \begin{equation*}
        h(\textbf{z}) \geq h(\textbf{y}) - ab^2
        = \sum_{e\in E(B')} \lambda\textbf{x}_e\log\frac{1}{\lambda\textbf{x}_e} - ab^2
        \geq \sum_{e\in E(B')}\textbf{x}_e\log\frac{1}{\textbf{x}_e} - ab^2,
    \end{equation*}
    as $\lambda\geq1$ and $x'\log\frac{1}{x'}\geq x\log\frac{1}{x}$ if $x'\geq x$ as long as $x,x'\leq e^{-1}$.
    Note that $h_{\textbf{x}}^+(v) = \sum_{w\in N_G^+(v)}\textbf{x}_{vw}\log\frac{1}{\textbf{x}_{vw}} \leq \log n$ for any $v\in V(G)$ by Jensen's inequality, and the same holds for~$h_{\textbf{x}}^-(v)$.
    Thus,
    \begin{align*}
        h_{\textbf{x}}(E(A,G)) \leq \sum_{v\in A} h_\textbf{x}^+(v) + h_\textbf{x}^-(v)
        \leq 2a\log n.
    \end{align*}
    This yields $h(\textbf{z}) \geq h(\textbf{x}) - h_{\textbf{x}}(E(A,G)) - ab^2 \geq h(\textbf{x}) - 3a\log n$.
    Altogether, $h(\textbf{z}) \geq h(G) - 3a\log n - \eps^2 n \geq h(G) - \eps n$.
\end{proof}

\section{Conditions for a random tree being well-behaved}\label{sec:Conditions for a random tree being well-behaved}

In this section, we define what it means for a (random) tree in a digraph~$G$ on~$n$ vertices to be \emph{self-avoiding} and \mbox{\emph{$(\cS,a,c)$-expected}}.
We prove that a tree of~$T$ with $n^\frac14+1\leq |T|\leq 3\Delta n^\frac14$ satisfies both properties with high probability.
This yields the foundation for applying Lemma~\ref{lemma: entropy} to show
that~$G$ contains not only a large number of copies of a tree of this order,
but also a large number of \emph{well-behaved} copies (such a copy is well-behaved if it is $(\cS,n^{\frac14-\frac{1}{17\sqrt{\ln n}}},n^{-\frac34-\frac{1}{18\sqrt{\ln n}}})$-expected for a collection of sets~$\cS$ that is specified in Section~\ref{sec:Counting trees}).

\begin{defin}\label{defin: (S,a,c)-expected}
Let~$G$ be a digraph and~$\textbf{x}$ a perfect fractional matching of~$G$.
Let~$T=(r_0,\ldots,r_m)$ be an oriented tree and $R=(R_0,\ldots,R_m)$ a random tree according to~$T$ in~$G$.
We call~$R$ \emph{self-avoiding} if $R_i\neq R_j$ for distinct $i,j\in[m]_0$.
Let~$\cS$ be a collection of subsets in~$V(G)$ and let $M\subseteq V(G)$.
We say that~$M$ is \mbox{\emph{$(\cS,a)$-expected}} if $||M\cap S| - \frac{|M|}{n}|S|| < a$ for each $S\in\cS$.
We say that~$M$ is \emph{$c$-expected} if $|\sum_{w\in N^+(v)\cap M} \textbf{x}_{vw} - \frac{|M|}{n}| < c$ and $|\sum_{w\in N^+(v)\cap M}\textbf{x}_{vw}\log\frac{1}{\textbf{x}_{vw}} - \frac{|M|}{n}h_{\textbf{x}}^+(v)| < c$ for each vertex $v\in V(G)$, and if the analogous results hold for~$N^-(v)$ and~$h_{\textbf{x}}^-(v)$.
Finally, we say that~$M$ is $(\cS,a,c)$-expected if it is both $(\cS,a)$-expected and $c$-expected.
We transfer these notions to a subgraph~$Q$ in~$G$ by setting $M:=V(Q)$.
\end{defin}

The next lemma shows that a random tree is self-avoiding with high probability. It is essentially already proved in~\cite{CK:09} and follows directly from the upper bound on~$\textbf{x}$ and an application of the union bound.
\begin{lemma}[{\cite[Lemma~5.1]{CK:09}}]\label{lemma: self-avoiding}
	Let~$G$ be a digraph on~$n$ vertices and~$\textbf{x}$ a perfect fractional matching of~$G$.
	Let~$T$ be an oriented tree on~$m+1$ vertices and~$R$ a random tree in~$G$ according to~$T$.
	Suppose every edge $e\in E(G)$ satisfies $\textbf{x}_e \leq \frac{b}{n}$ for some $b\geq1$.
	Then%
    \COMMENT{
    We have $\pr[R_i\in \{R_0,\ldots,R_{i-1}\}]\leq i\frac{b}{n}$.
    Thus,
    \begin{equation*}
        \pr[(R_0,\ldots,R_m)\text{ not self-avoiding}]
        = \sum_{i=1}^m \pr[R_i\in\{R_0,\ldots,R_{i-1}\}]
        \leq \sum_{i=1}^m i\frac{b}{n}
        \leq m^2\frac{b}{n}.
    \end{equation*}
    }
	\begin{equation*}
		\pr[R\text{ is self-avoiding}] \geq 1 - m^2\frac{b}{n}.
	\end{equation*}
\end{lemma}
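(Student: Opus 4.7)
The plan is a straightforward two-level union bound, relying only on the uniform upper bound $\mathbf{x}_e\leq b/n$ on the transition probabilities. The non-trivial point is merely to argue that every single step of the random tree, regardless of which previously embedded vertex is the parent and regardless of the orientation of the connecting edge in $T$, hits any fixed vertex with probability at most $b/n$.

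More concretely, fix $i\in[m]$ and let $r_p$ be the parent of $r_i$ in the breadth-first order $(r_0,\ldots,r_m)$ of $T$, so that $p<i$. By the definition of a random tree in a digraph (Section~\ref{sec:Preliminaries}), conditional on $R_0,\ldots,R_{i-1}$ the random variable $R_i$ is distributed over $V(G)$ according to the transition probabilities $p^{pi}_{R_p,w}$, which equal either $\mathbf{x}_{R_pw}$ (if $r_pr_i\in E(T)$ and $R_pw\in E(G)$) or $\mathbf{x}_{wR_p}$ (if $r_ir_p\in E(T)$ and $wR_p\in E(G)$), and $0$ otherwise. In particular, for any fixed $w\in V(G)$ we have
\begin{equation*}
\pr[R_i=w\mid R_0,\ldots,R_{i-1}]\leq \frac{b}{n}.
\end{equation*}

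Applying this with $w=R_j$ for each $j\in\{0,\ldots,i-1\}$ and taking a union bound yields
\begin{equation*}
\pr[R_i\in\{R_0,\ldots,R_{i-1}\}]\leq i\cdot\frac{b}{n}.
\end{equation*}
A second union bound over $i\in[m]$ then gives
\begin{equation*}
\pr[R\text{ is not self-avoiding}]\leq \sum_{i=1}^m i\cdot\frac{b}{n}=\frac{m(m+1)}{2}\cdot\frac{b}{n}\leq m^2\cdot\frac{b}{n},
\end{equation*}
which is the desired conclusion. There is no real obstacle here; the only point requiring care is to verify that in both possible orientations of the edge $r_pr_i$ of $T$ the transition probability $p^{pi}_{v,w}$ is still one of the weights $\mathbf{x}_e$ and hence bounded by $b/n$, which is immediate from the definition in Section~\ref{sec:Preliminaries}.
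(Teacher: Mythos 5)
Your proposal is correct and is essentially the paper's own argument: a conditional bound $\pr[R_i=w\mid R_0,\ldots,R_{i-1}]\leq \frac{b}{n}$ (valid in both edge orientations since the transition probability is always some $\mathbf{x}_e$), followed by a union bound over the at most $i$ previously embedded vertices and then over $i\in[m]$. Your write-up is in fact slightly more careful than the paper's sketch, since it makes the conditioning and the two orientation cases explicit.
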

\begin{proof}
    We have $\pr[R_i\in \{R_0,\ldots,R_{i-1}\}]\leq i\frac{b}{n}$.
    Thus,
    \begin{equation*}
        \pr[(R_0,\ldots,R_m)\text{ not self-avoiding}]
        = \sum_{i=1}^m \pr[R_i\in\{R_0,\ldots,R_{i-1}\}]
        \leq \sum_{i=1}^m i\frac{b}{n}
        \leq m^2\frac{b}{n},
    \end{equation*}
    which proves the claim.
\end{proof}

Thew following lemma yields a lower bound on the speed of convergence of a Markov chain.
\begin{lemma}[{\cite[Lemma~3.2]{JK:21}}]\label{lemma: rapidlymixing}
	Let $(Z_t)_{t\in\bN_0}$ be a Markov chain with state space $\{s_1,\ldots,s_n\}$ and
	transition matrices $P^{(t)}=(p^{(t)}_{ij})_{i,j\in[n]}$.
	Suppose there is a $\sigma=(\sigma_i)_{i\in[n]}$ with $\sigma_i>0$ for all $i\in\bN$ such that $\sum_{i\in[n]}\sigma_i=1$ and $\sigma P^{(t)} = \sigma$ for all $t\in\bN_0$.
	Let
	\[
	    \alpha:=\inf_{t\in\bN_0,i,j,k\in[n]}\frac{p^{(t)}_{ij}}{\sigma_k}, 
	    \hspace{5mm}
	    \beta:=\sup_{t\in\bN_0,i,j,k\in[n]}\frac{p^{(t)}_{ij}}{\sigma_k}.
	\]
	Assume $\alpha>0$. Then
	\begin{equation*}
		\pr[Z_t = s_i] = (1\pm (1-\textstyle{\frac\alpha2)^t})\sigma_i
	\end{equation*}
	holds for $t\geq 2+2\alpha^{-1}\log\beta$.
\end{lemma}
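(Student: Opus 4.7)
The plan is to pass to the ratios $\rho^{(t)}_i := \pr[Z_t = s_i]/\sigma_i$; since $\sigma$ is a common stationary distribution for every $P^{(t)}$, the goal $\pr[Z_t = s_i] = (1\pm(1-\alpha/2)^t)\sigma_i$ becomes $\|\rho^{(t)} - \mathbf{1}\|_\infty \leq (1-\alpha/2)^t$, and I would prove this by extracting a Doeblin-type contraction from the hypothesis $p^{(t)}_{ij} \geq \alpha\sigma_k$.

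First I would introduce the time-reversed kernel $T^{(t)}_{ji} := \sigma_i p^{(t)}_{ij}/\sigma_j$, which is row-stochastic because $\sigma P^{(t)} = \sigma$, and verify the evolution $\rho^{(t+1)}_j = \sum_i T^{(t)}_{ji}\rho^{(t)}_i$. The hypothesis gives $T^{(t)}_{ji} \geq \alpha\sigma_i$ for all $i,j,t$, so $T^{(t)} = \alpha E + (1-\alpha)\tilde Q^{(t)}$ where $E_{ji} := \sigma_i$ is independent of $j$ and $\tilde Q^{(t)}$ is stochastic; in particular $\alpha \leq 1$. Using $\sum_i\sigma_i\rho^{(t)}_i = \sum_i \pr[Z_t=s_i] = 1$, one then gets
\begin{equation*}
\rho^{(t+1)}_j - 1 = (1-\alpha)\bigl((\tilde Q^{(t)}\rho^{(t)})_j - 1\bigr),
\end{equation*}
which yields the contraction $\|\rho^{(t+1)} - \mathbf{1}\|_\infty \leq (1-\alpha)\|\rho^{(t)} - \mathbf{1}\|_\infty$.

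To initialise the recursion I would not start at $t=0$ (where $\rho^{(0)}$ concentrates on a single index and can be as large as $1/\min_k\sigma_k$), but at $t = 1$: the hypothesis gives $\rho^{(1)}_j = p^{(0)}_{i_0 j}/\sigma_j \in [\alpha,\beta]$, so that $\|\rho^{(1)}-\mathbf{1}\|_\infty \leq \max(\beta-1,1-\alpha) \leq \beta$. Iterating the contraction, $\|\rho^{(t)} - \mathbf{1}\|_\infty \leq (1-\alpha)^{t-1}\beta$. It remains to verify $(1-\alpha)^{t-1}\beta \leq (1-\alpha/2)^t$ for $t \geq 2 + 2\alpha^{-1}\log\beta$. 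Taking $\log_2$ and using $\log(1-\alpha/2) - \log(1-\alpha) = \log(1+\tfrac{\alpha/2}{1-\alpha}) \geq \log(1+\alpha/2) \geq \alpha/2$ together with $\log(1-\alpha/2) \geq -\alpha$, the required inequality reduces to $(t-1)\alpha/2 \geq \log\beta + \alpha$, which is implied by the assumed lower bound on $t$.

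The only subtle point is handling the initial distribution. A naive iteration from $t=0$ would introduce a factor of $\|\rho^{(0)}-\mathbf{1}\|_\infty$, which can be as large as $1/\min_k\sigma_k$, and would inflate the required burn-in by an additive $\log(1/\min_k\sigma_k)$. Passing through a single step first, so that the uniform upper bound $\beta$ (rather than $1/\min_k\sigma_k$) appears, is what yields the quoted $2 + 2\alpha^{-1}\log\beta$ threshold.
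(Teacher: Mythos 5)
The paper itself does not prove this lemma (it is imported from~\cite[Lemma~3.2]{JK:21}), so there is no in-paper argument to compare against; judged on its own, your argument is structurally sound. Passing to the ratios $\rho^{(t)}_i=\pr[Z_t=s_i]/\sigma_i$, the row-stochasticity of the time-reversed kernel $T^{(t)}_{ji}=\sigma_ip^{(t)}_{ij}/\sigma_j$ (which uses $\sigma P^{(t)}=\sigma$), the entrywise bound $T^{(t)}_{ji}\geq\alpha\sigma_i$ (take $k=j$ in the definition of $\alpha$), the Doeblin split $T^{(t)}=\alpha E+(1-\alpha)\tilde Q^{(t)}$, the identity $(E\rho^{(t)})_j=\sum_i\pr[Z_t=s_i]=1$ giving the contraction $\|\rho^{(t+1)}-\mathbf 1\|_\infty\leq(1-\alpha)\|\rho^{(t)}-\mathbf 1\|_\infty$, and the initialisation $\rho^{(1)}_j\in[\alpha,\beta]$ (which indeed holds for an arbitrary initial distribution, since $\rho^{(1)}_j$ is a convex combination of the ratios $p^{(0)}_{ij}/\sigma_j$) are all correct, as is the implicit fact $\beta\geq1$.

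The one genuine flaw is the final bookkeeping. Your two estimates are individually true, but together they give $t\log(1-\tfrac\alpha2)-(t-1)\log(1-\alpha)\geq(t-1)\tfrac\alpha2-\alpha$, so the sufficient condition you actually derive is $(t-1)\tfrac\alpha2\geq\log\beta+\alpha$, i.e.\ $t\geq3+2\alpha^{-1}\log\beta$ --- one more than the stated threshold $t\geq2+2\alpha^{-1}\log\beta$, so the closing claim that the assumed lower bound on $t$ implies your reduced inequality is false as written. The repair is to spend the slack differently: assuming $\alpha<1$ (for $\alpha=1$ the split forces $T^{(t)}=E$ and $\rho^{(t)}=\mathbf 1$ for $t\geq1$, so there is nothing to prove), use $(1-\tfrac\alpha2)^2\geq1-\alpha$ to get
\begin{equation*}
\frac{(1-\alpha/2)^t}{(1-\alpha)^{t-1}}
=\Bigl(\frac{1-\alpha/2}{1-\alpha}\Bigr)^{t-2}\cdot\frac{(1-\alpha/2)^2}{1-\alpha}
\geq\Bigl(1+\tfrac\alpha2\Bigr)^{t-2}
\geq2^{(t-2)\alpha/2},
\end{equation*}
so that $(1-\alpha)^{t-1}\beta\leq(1-\alpha/2)^t$ already when $(t-2)\tfrac\alpha2\geq\log\beta$, which is exactly $t\geq2+2\alpha^{-1}\log\beta$. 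With that one-line replacement your proof is complete and yields the lemma as stated.
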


Now we utilise Lemma~\ref{lemma: rapidlymixing} for our setting.
\begin{cor}\label{cor: probability}
	Suppose $\frac1n\ll\eps$. Let~$G$ be an \mbox{$(n,\eps)$-digraph}, let~$\textbf{x}$ be a perfect fractional matching of~$G$ and let $(Z_t)_{t\in\bN_0}$ be a random walk on~$G$ induced by~$\textbf{x}$ following the pattern of an oriented path~$L=(y_t)_{t\in\bN_0}$.
	Suppose~$\textbf{x}$ is \mbox{$b$-normal} for some $b\geq1$.
	Then, for any vertex $v\in V(G)$, we have
	\begin{equation*}
		\pr[Z_t=v] = (1\pm e^{-\frac{\eps}{2b^2}t})\frac1n 
	\end{equation*}
	for $t\geq 5+4b^2\eps^{-1}\log b$.
\end{cor}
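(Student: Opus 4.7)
The plan is to invoke Lemma~\ref{lemma: rapidlymixing}. The first step is to identify a stationary distribution common to all $P^{(t)}$. Set $\sigma_v:=\tfrac{1}{n}$ for every $v\in V(G)$. If the pattern dictates $p^{(t)}_{vw}=\textbf{x}_{vw}$ whenever $vw\in E(G)$, then $\sum_v \sigma_v p^{(t)}_{vw}=\tfrac{1}{n}\sum_{v\in N^-(w)}\textbf{x}_{vw}=\tfrac{1}{n}$, since $\textbf{x}$ is a perfect fractional matching; the analogous computation handles the reversed pattern bit. Hence $\sigma$ is stationary for $P^{(t)}$ at every time $t$, irrespective of~$L$.

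The main obstacle is that Lemma~\ref{lemma: rapidlymixing} requires $\alpha>0$, while the one-step transition matrices have many zero entries (two vertices need not be adjacent in~$G$). To circumvent this, we apply the lemma to the two-step chain $(Z_{2k})_{k\in\bN_0}$. For any $v,w\in V(G)$ and any time $t$,
\[
    \pr[Z_{t+2}=w\mid Z_t=v]=\sum_{u\in V(G)}p^{(t)}_{vu}p^{(t+1)}_{uw}.
\]
A valid intermediate $u$ must lie in an appropriate in- or out-neighbourhood of $v$ (determined by the orientation of the $t$-th edge of $L$) of size at least $(\tfrac12+\eps)n$, and simultaneously in an appropriate in- or out-neighbourhood of $w$ of size at least $(\tfrac12+\eps)n$; so the set of valid $u$ has size at least $2\eps n$. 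Using $\textbf{x}_e\geq\tfrac{1}{bn}$ on both factors yields the lower bound $\tfrac{2\eps}{b^2n}$, while $\textbf{x}_e\leq\tfrac{b}{n}$ on the second factor yields the upper bound $\tfrac{b}{n}$. Hence the two-step chain satisfies the hypotheses of Lemma~\ref{lemma: rapidlymixing} with $\alpha=2\eps/b^2$ and $\beta=b$.

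Plugging these parameters into Lemma~\ref{lemma: rapidlymixing} shows that, for even $t$ with $t\geq 2+(b^2/\eps)\log b$, we have $\pr[Z_t=v]=(1\pm(1-\eps/b^2)^{t/2})\tfrac{1}{n}$, and the elementary estimate $(1-\eps/b^2)^{t/2}\leq e^{-\eps t/(2b^2)}$ yields the stated conclusion. For odd $t$, the identical argument applies to the two-step chain $(Z_{2k+1})_{k\in\bN_0}$ shifted by one time step, since the bounds on $\alpha$ and $\beta$ depend only on $b$-normality of $\textbf{x}$ and the minimum semidegree of $G$, not on the start time or on the initial distribution at that start time. The threshold $t\geq 5+4b^2\eps^{-1}\log b$ in the corollary is chosen generously to accommodate both parities and the rounding involved in passing from the two-step index to the original time.
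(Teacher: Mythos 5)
Your proposal is correct and follows essentially the same route as the paper: uniform stationarity from the doubly stochastic one-step matrices, passing to the two-step chain so that $\alpha\geq\frac{2\eps}{b^2}>0$ via the semidegree condition, applying Lemma~\ref{lemma: rapidlymixing}, and treating odd $t$ by a one-step shift (the paper instead conditions on $Z_1$, which amounts to the same thing). Your bound $\beta\leq b$ is even slightly sharper than the paper's $\beta\leq b^2$, and the corollary's threshold indeed absorbs the resulting (and your slightly misstated intermediate) time requirements.
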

\begin{proof}
    Denote $p_{vw}:=\textbf{x}_{vw}$ if $vw\in E(G)$ and $p_{vw}:=0$ otherwise. Let $P:=(p_{vw})_{v,w\in V(G)}$.
    Note that~$P$ is the transition matrix of~$Z$ at times~$t$ where $y_ty_{t+1}\in E(L)$ and $P^T$ is the transition matrix of~$Z$ at times~$t$ where $y_{t+1}y_t\in E(L)$.
    As $\delta^0(G)\geq(\frac{1}{2}+\eps)n$,
	we have $|A\cap B| \geq 2\eps n$ for all $v,w\in G$, where $A\in\{N^+(v),N^-(v)\}$ and $B\in\{N^+(w),N^-(w)\}$.
	Suppose $y_iy_{i+1},y_{i+1}y_{i+2}\in E(L)$. As $p_{vw}\geq\frac{1}{bn}$ for $vw\in E(G)$,
	we obtain
	\begin{equation*}
		q^{(i)}_{vw}:=\pr[Z_{i+2}=w \mid Z_i = v] \geq |N^+(v)\cap N^-(w)|\min_{u\in N^+(v)\cap N^-(w)} p_{vu}p_{uw} \geq \frac{2\eps}{b^2n}
	\end{equation*}
	for $v,w\in V(G)$.
	The analogous result holds if the path~$L$ is oriented differently.
	Note that, as~$P$ and~$P^T$ are doubly stochastic, so are $P^2,(P^T)^2,PP^T,P^TP$.
    Thus, $\sigma:=(\sigma_v)_{v\in V(G)} := \frac1n(1,\ldots,1)$ satisfies $\sigma_v>0$ for all $v\in V(G)$, $\sum_{v\in V(G)}\sigma_v=1$ and $\sigma Q =\sigma$ for each $Q\in\{P^2,(P^T)^2,PP^T,P^TP\}$.
    Let $Z'_i:=Z_{2i}$. Then $(Z'_i)_{i\in\bN_0}$ defines a Markov chain with state space~$V(G)$ and potential transition matrices $P^2,(P^T)^2,PP^T,P^TP$.
	Let $\alpha:=\min_{i,v,w,u}\frac{q^{(i)}_{vw}}{\sigma_u}\geq\frac{2\eps}{b^2}$ and $\beta:=\max_{i,v,w,u}\frac{q^{(i)}_{vw}}{\sigma_u}=n\cdot\max_{i,v,w}q^{(i)}_{vw} \leq n\frac{b}{n}n\frac{b}{n} = b^2$.
	Using the inequality $(1-x)^t \leq e^{-xt}$,
	Lemma~\ref{lemma: rapidlymixing} yields
	\begin{equation*}
		\pr[Z_t=v] = \pr[Z'_{\frac{t}{2}}=v] = (1\pm(1-\textstyle{\frac{\alpha}{2}})^\frac{t}{2})\sigma_v = (1\pm(1-\textstyle{\frac{\eps}{b^2}})^\frac{t}{2})\frac{1}{n}
        = (1\pm e^{-\frac{\eps}{2b^2}t})\frac{1}{n}
	\end{equation*}
	for any vertex $v\in V(G)$, for any even~$t$ that satisfies $t\geq 4+4b^2\eps^{-1}\log b$.
    Similarly we conclude $\pr[Z_{i+t}=v\mid Z_i=w] = (1\pm e^{-\frac{\eps}{2b^2}t})\frac{1}{n}$ for any $v,w\in V(G)$ and even $t\geq 4+4b^2\eps^{-1}\log b$.
    Thus, for any odd~$t$ that satisfies $t\geq 5+4b^2\eps^{-1}\log b$, we obtain
    \textcolor{black}{\begin{align*}
        \pr[Z_t=v]
        = \sum_{w\in V(G)}\pr[Z_1=w]\pr[Z_t=v\mid Z_1=w]
        = (1\pm e^{-\frac{\eps}{2b^2}t})\frac1n.
    \end{align*}}
    This yields the desired result.
\end{proof}

The following lemma uses Corollary~\ref{cor: probability} to show that a random tree is very likely to behave as one would expect with regard to a subset $S\subseteq V(G)$. To be more precise, given a vertex weighting of~$G$, with high probability the sum of weights of the vertices in~$S$ that are hit by the random tree does not deviate too much from its expected value.
\begin{lemma}\label{lemma: neighbours tree}
	Suppose $\frac1n\ll\gamma\ll\frac1b\ll\eps$. Let~$G$ be an \mbox{$(n,\eps)$-digraph}, let $S\subseteq V(G)$, let~$\textbf{x}$ be a \mbox{$b$-normal} perfect fractional matching of~$G$, let $c\in\bR$ and let $\textbf{y}\colon V(G)\to\bR_{\geq0}$ be a vertex weighting with $\textbf{y}_v\leq c$ for all $v\in V(G)$.
	Let $\Delta:= e^{\gamma\sqrt{\ln n}}$
	and let $T=(r_0,\ldots,r_m)$ be an oriented tree with $n^\frac14+1\leq |T|\leq 3\Delta n^\frac14$ and $\Delta(T)\leq\Delta$.
	Let $R=(R_0,\ldots,R_m)$ be a random tree in~$G$ according to~$T$.
	Then
	\begin{equation*}
		\pr\left[\left|\sum_{j=0}^m \textbf{y}_{R_j}\1_{R_j\in S} - \frac{m+1}{n}\sum_{v\in S}\textbf{y}_{v}\right| \geq cn^{\frac14-\frac{1}{17\sqrt{\ln n}}} \right] \leq \frac{1}{n^3}.
	\end{equation*}
\end{lemma}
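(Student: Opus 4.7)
\emph{Proof proposal.} The plan is to combine two ingredients: an expectation estimate via mixing of the induced random walk, and a concentration argument via Azuma's inequality applied to a Doob martingale.

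First I would compute $\ex[X]$, where $X := \sum_{j=0}^m \textbf{y}_{R_j}\mathbbm{1}_{R_j \in S}$, and show it is close to $\frac{m+1}{n}\sum_{v\in S}\textbf{y}_v$. For each $r_j$ at depth $d_j$ in $T$, the variable $R_j$ is the endpoint of a random walk on $G$ of length $d_j$ starting at $R_0$ and following the orientation pattern of the root-to-$r_j$ path in $T$, so Corollary~\ref{cor: probability} gives $\Pr[R_j=v]=(1\pm e^{-\eps d_j/(2b^2)})/n$ whenever $d_j$ exceeds a mixing threshold $D$. Since $|T|\geq n^{1/4}+1$ and $\Delta(T)\leq\Delta=e^{\gamma\sqrt{\ln n}}$ force the depth of $T$ to be at least $\Omega(\sqrt{\ln n}/\gamma)$, one may take $D$ of order $\sqrt{\ln n}$: then the number of shallow vertices (at depth below $D$) is at most $\Delta^D \ll n^{1/4-1/(17\sqrt{\ln n})}$, and the mixing error from deeper vertices is similarly dominated by the allowed tolerance. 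Summing these contributions, one shows $|\ex[X]-\tfrac{m+1}{n}\sum_{v\in S}\textbf{y}_v|\leq\tfrac12 cn^{1/4-1/(17\sqrt{\ln n})}$.

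Next I would apply Azuma's inequality (Lemma~\ref{lemma: Azuma}) to the Doob martingale $Z_j:=\ex[X\mid R_0,\ldots,R_j]$, exposing the random tree in breadth-first order. Since $R_i$ for $r_i\notin T(r_j)$ does not depend on $R_j$, one has $Z_j-Z_{j-1}=\sum_{r_i\in T(r_j)}(\ex[X_i\mid R_j]-\ex[X_i\mid R_{\mathrm{parent}(j)}])$, where $X_i:=\textbf{y}_{R_i}\mathbbm{1}_{R_i\in S}$. Applying Corollary~\ref{cor: probability} along each subpath from $r_j$ to a descendant $r_i$ at relative depth $k$, the contribution of $r_i$ to $Z_j-Z_{j-1}$ is $O(ce^{-\eps k/(2b^2)})$ when $k\geq D$ and $O(c)$ otherwise. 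Bounding the shallow-descendant count by $\min(\Delta^D,|T(r_j)|)$ and summing the geometric series for deeper descendants (capped by $|T(r_j)|$), then squaring and summing across $j$ using $\sum_j|T(r_j)|^2=O(|T|^2)$, gives a bound on the quadratic variation strong enough for Azuma to yield $\Pr[|X-\ex[X]|\geq\tfrac12 cn^{1/4-1/(17\sqrt{\ln n})}]\leq 1/n^3$. Combined with the expectation estimate, this proves the lemma.

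The main obstacle is the martingale step: since $\Delta=e^{\gamma\sqrt{\ln n}}$ exceeds $e^{\eps/(2b^2)}$ for large $n$, the descendant sum $\sum_k\Delta^k e^{-\eps k/(2b^2)}$ diverges and must be truncated at $|T(r_j)|$, leading to delicate $\sqrt{\ln n}$-order bookkeeping; the exponent $1/17$ in the tolerance is precisely what this balance (together with the contribution of shallow vertices to the expectation error) affords.
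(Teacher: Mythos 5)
Your expectation estimate is fine and essentially matches the paper's computation of the martingale's starting value. The gap is in the concentration step. The inequality $\sum_j|T(r_j)|^2=O(|T|^2)$, which is exactly what your Azuma bound rests on, is false: already when $T$ is a path one has $|T(r_j)|=m+1-j$ and hence $\sum_j|T(r_j)|^2=\Theta(|T|^3)$. With the correct worst-case bound $\sum_j|T(r_j)|^2\le |T|^3\approx\Delta^3 n^{3/4}$, your single-threshold truncation cannot be repaired by re-choosing $D$: to make the deep-descendant term $c\,|T(r_j)|e^{-\eps D/(2b^2)}$ absorb the extra factor $n^{1/4}$ after squaring and summing you need $D=\Omega_{b,\eps}(\ln n)$, but then $\Delta^{D}=e^{\gamma D\sqrt{\ln n}}$ is superpolynomial, the shallow cap $\min\{\Delta^D,|T(r_j)|\}$ degenerates to $|T(r_j)|$, and a single increment of size $\approx c\,|T(r_0)|\approx cm$ already exceeds the allowed deviation $\tfrac12c\,n^{1/4-1/(17\sqrt{\ln n})}$; conversely, keeping $\Delta^{D}$ small enough forces $D=O(\sqrt{\ln n}/\gamma)$, far too small for the deep term. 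A genuine repair must keep the level-by-level counts $d_j(k)\le\min\{\Delta^k,|T(r_j)|\}$ (so that path-like subtrees only contribute $O_{b,\eps}(c)$ per step), combine $\max_j\sum_k d_j(k)e^{-\eps k/(2b^2)}=O_{b,\eps}(|T|^{1-\eps/(2b^2\gamma\sqrt{\ln n})})$ with $\sum_j\sum_k d_j(k)e^{-\eps k/(2b^2)}=O_{b,\eps}(|T|)$, and then invoke an Azuma inequality with non-uniform increments (Lemma~\ref{lemma: Azuma} as stated only allows a uniform bound). None of this is in your outline; it is precisely the "delicate bookkeeping" you flag but do not carry out, so as written the proof does not close.

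For comparison, the paper avoids this vertex-by-vertex quadratic-variation problem altogether. It partitions $T$ as in Lemma~\ref{lemma: tree partition} into $k\le 3\Delta\ell^{\zeta}$ subtrees of size about $\ell^{1-\zeta}$, where $\ell=n^{1/4}$ and $\zeta=1/\sqrt{\ln n}$, discards from each subtree only the at most $\ell^{1/2}$ vertices of depth below $\tfrac12\log_\Delta\ell$, and runs the exposure martingale on the $k$ block sums $Y_i$ rather than on the individual $R_j$. Corollary~\ref{cor: probability} then shows that every surviving vertex of a later block is within a factor $1\pm\ell^{-\zeta}$ of uniform conditionally on everything in earlier blocks, so each of the only $k$ increments is bounded by $8c\Delta\ell^{1-\zeta}$, and one application of Lemma~\ref{lemma: Azuma} with a uniform increment bound suffices. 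Your route (Doob martingale over single vertices plus mixing along root-to-descendant paths) could in principle be made to work, but only with the finer accounting sketched above, which is a substantively different and harder calculation than the one you describe.
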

\begin{proof}
	Our strategy for the proof is to consider a partition of~$T$ into rooted subtrees. For each of those subtrees we ignore the vertices close to the root and then show that the behaviour for any pair of vertices in different subtrees is essentially independent.
	To this end, we adapt ideas of Lemma~6.6 in~\cite{JKKO:19}.
	
	Let $\ell=n^\frac14$.
	As in Lemma~\ref{lemma: tree partition}, we partition~$T$ into vertex-disjoint rooted subtrees $(T_1,t_1),\ldots,(T_k,t_{k})$
	such that $T_i\subseteq T(t_i)$ for every $i\in[k]$, and each subtree satisfies $\ell^{1-\zeta}\leq |T_i|\leq 2\Delta\ell^{1-\zeta}$,
	where $\zeta:=\frac{\gamma}{{\ln \Delta}}=\frac{1}{\sqrt{\ln n}}$.
	Note that $\frac{\ell^\zeta}{2\Delta}\leq k\leq 3\Delta\ell^\zeta$.
	Let~$T_i'$ denote the subforest of $T_i$ induced by vertices of~$T_i$ of depth at least~$\frac12\log_\Delta\ell$ in~$T_i'$.
	Then $|T_i-T_i'|\leq\Delta^{\frac12\log_\Delta\ell}=\ell^\frac12$.
	Let $X_j:=\textbf{y}_{R_j}\1_{R_j \in S}$ for $j\in[m]_0$. Let $J_i:=\{j\in[m]_0 \mid r_j\in V(T_i')\}$ and let $Y_i:=\sum_{j\in J_i}\textbf{y}_{R_j}\1_{R_j\in S}$ for $i\in[k]$.
    Note that
	\begin{equation}\label{equation: sum X_j}
		\sum_{j=0}^m X_j \leq c\sum_{i=1}^k |T_i-T_i'| + \sum_{i=1}^k Y_i \leq 3c\Delta\ell^{\frac12+\zeta} + \sum_{i=1}^k Y_i.
	\end{equation}
	Consider the exposure martingale $Z_i:=\ex[\sum_{i'=1}^k Y_{i'} \mid Y_1,\ldots,Y_i]$.
    We show that $|Z_i-Z_{i-1}|$ is not too large for any $i\in[k]$. Then we apply Azuma's inequality to show that $Z_0=\ex[\sum_{i=1}^k Y_{i}]$ is close to $Z_k=\sum_{i=1}^k Y_{i}$ with high probability. As~$\sum_{i=1}^k Y_{i}$ is close to $\sum_{j=0}^m X_j$ and~$\ex[\sum_{i=1}^k Y_{i}]$ is close to $\frac{m+1}{n}\sum_{v\in S}\textbf{y}_v$, this yields the desired result.

    Let $i\in[k]$ and $y_1,\ldots,y_i,y_i'\in\bN_0$ be such that $y_{i'}\leq c|T_{i'}'|$ for each ${i'}\in[i]$ and $y_i'\leq c|T_i'|$.
	Let~$\cE$ be the event that $Y_1=y_1,\ldots,Y_i=y_i$ occurs and let~$\cE'$ be the event that $Y_1=y_1,\ldots,Y_{i-1}=y_{i-1},Y_i=y_i'$ occurs.
    Let $i'>i$ and $j\in J_{i'}$.
    Note that the length of the path from~$t_{i'}$ to any vertex in~$T_{i'}'$ is at least $\frac12\log_\Delta\ell$.
    \textcolor{black}{By the definition of a random tree, this path, when embedded into~$G$, can be regarded as a random walk.}
    Thus for any $v\in V(G)$ and $j\in J_{i'}$, Corollary~\ref{cor: probability} yields $\pr[R_j=v\mid\cE] = (1\pm e^{-\frac{\eps}{2b^2}\frac12\log_\Delta \ell})\frac1n = (1\pm\ell^{-\zeta})\frac1n$, where the last inequality follows from $\gamma\ll\frac{1}{b}\ll\eps$.%
    \COMMENT{As $\gamma\ll\frac{1}{b}\ll\eps$, we have $\frac{\eps}{4b^2}\geq\gamma$. Hence $e^{-\frac{\eps}{4b^2}\log_\Delta\ell} \leq e^{-\frac{\gamma}{\ln\Delta}\ln\ell} = e^{-\zeta\ln\ell} = \ell^{-\zeta}$.}%
    \COMMENT{
        Note that this calculation is the reason we have to choose $\zeta\leq\frac{\gamma}{\ln\Delta}$.
    }
    This implies
	\begin{align} \label{equation: exp_of_Y_j_2}
		\ex[Y_{i'} \mid \cE]
        &= \sum_{j\in J_{i'}} \sum_{v\in S} \textbf{y}_{v} \ex[\1_{R_j=v} \mid \cE]
        = \sum_{j\in J_{i'}}\sum_{v\in S} \textbf{y}_{v} \pr[R_j=v \mid \cE]
		= (1\pm\ell^{-\zeta}) \frac{|T_{i'}'|}{n}\sum_{v\in S}\textbf{y}_v.
	\end{align}
    Thus,
            \COMMENT{
            As $\textbf{y}_v \leq c$ and $|S|\leq n$ as well as $\sum_{i'=i+1}^k|T_{i'}'|\leq |T| \leq 3\Delta\ell$, we have
            \begin{align*}
		      \frac{2}{n}\ell^{-\zeta}\sum_{v\in S}\textbf{y}_v \sum_{i'=i+1}^k |T_{i'}'|
                \leq 2c\ell^{-\zeta}\sum_{i'=i+1}^k|T_{i'}'|
		      \leq 6c\Delta\ell^{1-\zeta}.
            \end{align*}
            }
	\begin{align*}
	   \left|\ex\left[\sum_{i'=1}^k Y_{i'}\mid\cE\right]-\ex\left[\sum_{i'=1}^k Y_{i'}\mid\cE'\right]\right|
		&\leq |y_i-y_i'| + \sum_{i'=i+1}^k |\ex[Y_{i'}\mid\cE]-\ex[Y_{i'}\mid\cE']| \\
		&\leq c|T_i'| + \sum_{i'=i+1}^k \left|(1\pm\ell^{-\zeta})\frac{|T_{i'}'|}{n}\sum_{v\in S}\textbf{y}_v - (1\pm\ell^{-\zeta})\frac{|T_{i'}'|}{n}\sum_{v\in S}\textbf{y}_v\right| \\
		&\leq 2c\Delta\ell^{1-\zeta} + \frac{2}{n}\ell^{-\zeta}\sum_{v\in S}\textbf{y}_v \sum_{i'=i+1}^k |T_{i'}'|
		\leq 8c\Delta\ell^{1-\zeta}.
	\end{align*}
	Hence, $|Z_i-Z_{i-1}|\leq 8c\Delta\ell^{1-\zeta} = 8c\Delta n^{\frac14-\frac\zeta4}$ for all $i\in[k]$.
	Lemma~\ref{lemma: Azuma}%
        \COMMENT{
            Note that this calculation is the reason we need $\Delta\leq e^{\gamma\sqrt{\ln n}}$. This is due to the fact that the last inequality in the application of Azuma's inequality is true essentially if $\zeta\geq 49\frac{\ln\Delta}{\ln n}$. As $\zeta=\frac{\gamma}{\ln\Delta}$, this is equivalent to $\ln^2\Delta \leq \frac{\gamma}{49}\ln n$, which means $\Delta \leq e^{\frac{\sqrt{\gamma}}{7}\sqrt{\ln n}}$ (and we simply replace $\gamma$ by $\sqrt{\gamma}$).
        }
        yields%
        \COMMENT{
            As $k\leq 3\Delta\ell^\zeta = 3\Delta n^{\frac{\zeta}{4}}$, we have $-\frac{1}{128k}n^{\frac{\zeta}{2}-\frac{\zeta}{8}} \leq -\frac{1}{10^3\Delta}n^{\frac{\zeta}{2}-\frac{\zeta}{4}-\frac{\zeta}{8}}$ and thus the second inequality follows
        }
	\begin{align*}
		\pr\left[\left|Z_k-Z_0\right|\geq c\Delta n^{\frac14-\frac{\zeta}{16}}\right]
		&\leq 2\exp\left(-\frac{c^2\Delta^2n^{\frac12-\frac\zeta8}}{2k(8c\Delta n^{\frac14-\frac\zeta4})^2}\right)
		= 2\exp\left(-\frac{1}{128k}n^{\frac{3}{8}\zeta}\right) \\
		&\leq 2\exp\left(-\frac{1}{384\Delta} n^{\frac\zeta8}\right)
		\leq \frac{1}{n^3}.
	\end{align*}
	We have $Z_k=\sum_{i=1}^k Y_{i}$ and $Z_0 = \ex[\sum_{i=1}^k Y_{i}] = \sum_{i=1}^k \ex Y_{i} = (1\pm\ell^{-\zeta})\frac{1}{n}\sum_{v\in S}\textbf{y}_v\sum_{i=1}^k |T_{i}'|$, which follows as in~\eqref{equation: exp_of_Y_j_2}.
	As $m+1-3\Delta\ell^{\frac12+\zeta}\leq\sum_{i=1}^k|T_{i}'|\leq m+1$, we obtain%
        \COMMENT{
            As $\sum_{v\in S}\textbf{y}_v\leq c|S|\leq cn$, we have 
            $Z_0 \leq (1+\ell^{-\zeta})\frac1n(m+1)\sum_{v\in S}\textbf{y}_v 
            = \frac{m+1}{n}\sum_{v\in S}\textbf{y}_v + \ell^{-\zeta}\frac{m+1}{n}\sum_{v\in S}\textbf{y}_v 
            \leq \frac{m+1}{n}\sum_{v\in S}\textbf{y}_v + \ell^{-\zeta}3\Delta\ell c
            \leq \frac{m+1}{n}\sum_{v\in S}\textbf{y}_v + 3c\Delta\ell^{1-\zeta}$ 
            and
            $Z_0\geq (1-\ell^{-\zeta})\frac{1}{n}(m+1-3\Delta\ell^{\frac12+\zeta})\sum_{v\in S}\textbf{y}_v 
            = \frac{m+1}{n}\sum_{v\in S}\textbf{y}_v - \frac{3\Delta\ell^{\frac12+\zeta}}{n}\sum_{v\in S}\textbf{y}_v - \ell^{-\zeta}\frac{m+1-3\Delta\ell^{\frac12+\zeta}}{n}\sum_{v\in S}\textbf{y}_v
            \geq \frac{m+1}{n}\sum_{v\in S}\textbf{y}_v - 3c\Delta\ell^{\frac12+\zeta} - c\ell^{-\zeta}(m+1-3\Delta\ell^{\frac12+\zeta})
            \geq \frac{m+1}{n}\sum_{v\in S}\textbf{y}_v - 3c\Delta\ell^{\frac12+\zeta} - 3c\Delta\ell^{1-\zeta}
            \geq \frac{m+1}{n}\sum_{v\in S}\textbf{y}_v - 6c\Delta\ell^{1-\zeta}$.
        }
    \begin{equation}\label{equation: Z_0}
		Z_0 = \frac{m+1}{n} \sum_{v\in S} \textbf{y}_v \pm 6c\Delta\ell^{1-\zeta}.
	\end{equation}
	Recall that $Z_k\leq \sum_{j=0}^m X_j \leq Z_k+3c\Delta\ell^{\frac12+\zeta}$ by~\eqref{equation: sum X_j}. Combining this with~\eqref{equation: Z_0} yields
	\begin{align*}
		\left|\sum_{j=0}^m X_j-\frac{m+1}{n} \sum_{v\in S}\textbf{y}_v\right|
		&\leq |Z_k-Z_0| + \left|Z_0-\frac{m+1}{n} \sum_{v\in S}\textbf{y}_v \right| + 3c\Delta\ell^{\frac12+\zeta} \\
        &\leq |Z_k-Z_0| + 9c\Delta n^{\frac14-\frac{\zeta}{16}}.
	\end{align*}
	Thus,
	\begin{align*}
		\pr\left[\left|\sum_{j=0}^m X_j - \frac{m+1}{n}\sum_{v\in S}\textbf{y}_v\right|
		\geq cn^{\frac14-\frac{1}{17\sqrt{\ln n}}}\right]
		&\leq \pr\left[\left|\sum_{j=0}^m X_j - \frac{m+1}{n} \sum_{v\in S}\textbf{y}_v \right|
		\geq 10c\Delta n^{\frac14-\frac{\zeta}{16}}\right] \\
		&\leq \pr\left[\left|Z_k-Z_0\right|\geq c\Delta n^{\frac14-\frac{\zeta}{16}}\right]
		\leq \frac{1}{n^3},
	\end{align*}
    which proves the statement.
\end{proof}

In particular, by using Lemma~\ref{lemma: neighbours tree}, we obtain that a random tree is very likely to visit a vertex set~$S$ around the expected amount of times. Furthermore, given a vertex $v\in V(G)$, the sum of edge weights to neighbours in~$S$ and the entropy of these edges are very likely to be close to their expected value, respectively.

\begin{cor}\label{cor: azuma corollary}
Under the assumptions of Lemma~\ref{lemma: neighbours tree}, by choosing $\textbf{y}\equiv 1$ and $c=1$, we obtain
    \begin{align*}
        \pr\left[\left|\sum_{j=0}^m \1_{R_j\in S} - \frac{m+1}{n}|S|\right| \geq n^{\frac14-\frac{1}{17\sqrt{\ln n}}}\right] \leq \frac{1}{n^3}
    \end{align*}
for every $S\subseteq V(G)$.
Let $v\in V(G)$.
Choosing $S:=N^+(v)$, $\textbf{y}_w := \textbf{x}_{vw}$ for $w\in V(G)$ and $c=\frac{b}{n}$ yields
    \begin{align*}
        \pr\left[\left|\sum_{j=0}^m\textbf{x}_{vR_j}\1_{R_j\in N^+(v)} - \frac{m+1}{n}\right| \geq n^{-\frac34-\frac{1}{18\sqrt{\ln n}}}\right] \leq \frac{1}{n^3},
    \end{align*}
as well as the analogous result for~$N^-(v)$.
Choosing $S:=N^+(v)$, $\textbf{y}_w := \textbf{x}_{vw}\log\frac{1}{\textbf{x}_{vw}}$ for $w\in V(G)$ and $c=\frac{b}{n}\log n$ yields
    \begin{align*}
	\pr\left[\left|\sum_{j=0}^m\textbf{x}_{vR_j}\log\frac{1}{\textbf{x}_{vR_j}}\1_{R_j\in N^+(v)} - \frac{m+1}{n}h_{\textbf{x}}^+(v)\right| \geq n^{-\frac34-\frac{1}{18\sqrt{\ln n}}}\right] \leq \frac{1}{n^3},
    \end{align*}
as well as the analogous result for~$N^-(v)$ and~$h_{\textbf{x}}^-(v)$.
\end{cor}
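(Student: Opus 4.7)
The plan is to verify that each of the three claimed inequalities is a direct instance of Lemma~\ref{lemma: neighbours tree}, with the vertex weighting $\textbf{y}$, the target set $S$, and the uniform bound $c$ specified in the statement. In each case the work reduces to: (a) checking the hypothesis $\textbf{y}_w \leq c$ for all $w \in V(G)$; (b) evaluating $\sum_{w\in S}\textbf{y}_w$ so that the right-hand side of Lemma~\ref{lemma: neighbours tree} matches the quantity appearing in the claimed bound; and (c) confirming that the error $c n^{\frac14-\frac{1}{17\sqrt{\ln n}}}$ supplied by the lemma is bounded by the error $n^{-\frac34-\frac{1}{18\sqrt{\ln n}}}$ claimed here.

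For the first inequality, taking $\textbf{y}\equiv 1$ with $c=1$ is trivially legal and gives $\textbf{y}_{R_j}\1_{R_j\in S}=\1_{R_j\in S}$ and $\sum_{v\in S}\textbf{y}_v=|S|$, so the conclusion is verbatim Lemma~\ref{lemma: neighbours tree}. For the second, I would take $S=N^+(v)$, $\textbf{y}_w=\textbf{x}_{vw}$, and $c=b/n$; the bound $\textbf{y}_w\leq c$ is the upper half of $b$-normality, and $\sum_{w\in N^+(v)}\textbf{x}_{vw}=1$ is the perfect-fractional-matching identity at~$v$. The error term becomes $b\cdot n^{-\frac34-\frac{1}{17\sqrt{\ln n}}}$, and absorbing the constant $b$ into the exponent only requires the observation that $n^{\frac{1}{17\sqrt{\ln n}}-\frac{1}{18\sqrt{\ln n}}}=\exp(\sqrt{\ln n}/306)$ tends to infinity, so for $n$ large (guaranteed by the hierarchy $\frac1n\ll\gamma\ll\frac1b$) we obtain $b\cdot n^{-\frac34-\frac{1}{17\sqrt{\ln n}}}\leq n^{-\frac34-\frac{1}{18\sqrt{\ln n}}}$. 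The case of $N^-(v)$ follows symmetrically.

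For the third inequality, I set $S=N^+(v)$, $\textbf{y}_w=\textbf{x}_{vw}\log(1/\textbf{x}_{vw})$, and $c=(b/n)\log n$. The only step requiring a brief verification is the pointwise bound $\textbf{y}_w\leq c$: since $\textbf{x}_{vw}\leq b/n\leq 1/e$ for $n$ large and $x\mapsto x\log(1/x)$ is increasing on $(0,1/e]$, we get $\textbf{y}_w\leq (b/n)\log(n/b)\leq (b/n)\log n=c$. Then $\sum_{w\in N^+(v)}\textbf{y}_w=h^+_{\textbf{x}}(v)$ by definition, and the error term $(b/n)\log n\cdot n^{\frac14-\frac{1}{17\sqrt{\ln n}}}$ is swallowed by $n^{-\frac34-\frac{1}{18\sqrt{\ln n}}}$ by the same asymptotic as before, since $b\log n$ is dominated by $\exp(\sqrt{\ln n}/306)$; the $N^-(v)$ case is again symmetric. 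There is no genuine obstacle: the corollary is pure bookkeeping on error exponents, the only mild subtlety being the monotonicity argument that keeps the entropic weight under $c=(b/n)\log n$ in the third part.
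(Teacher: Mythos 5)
Your proposal is correct and matches the paper's (implicit) argument exactly: the corollary is just Lemma~\ref{lemma: neighbours tree} instantiated with the stated choices of $S$, $\textbf{y}$ and $c$, using $\sum_{w\in N^+(v)}\textbf{x}_{vw}=1$, $\sum_{w\in N^+(v)}\textbf{x}_{vw}\log\frac{1}{\textbf{x}_{vw}}=h^+_{\textbf{x}}(v)$, and the absorption of $b$ (resp.\ $b\log n$) into the exponent gap $n^{\frac{1}{306\sqrt{\ln n}}}=\exp(\sqrt{\ln n}/306)$. Your monotonicity check of $x\log\frac1x$ on $(0,1/e]$ for the third part is a fine way to verify $\textbf{y}_w\leq c$ there.
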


The following result combines Lemma~\ref{lemma: self-avoiding} and Corollary~\ref{cor: azuma corollary} to obtain a lower bound on the probability that a random tree is self-avoiding and $(\cS,n^{\frac14-\frac{1}{17\sqrt{\ln n}}},n^{-\frac34-\frac{1}{18\sqrt{\ln n}}})$-expected for a collection~$\cS$ of subsets of~$V(G)$ of size at most~$n^2$.
\begin{cor}\label{cor: well-behaved tree}
	Suppose $\frac1n\ll\gamma\ll\frac1b\ll\eps$. Let~$G$ be an \mbox{$(n,\eps)$-digraph} and let~$\textbf{x}$ be a \mbox{$b$-normal} perfect fractional matching of~$G$.
	Let $\Delta:= e^{\gamma\sqrt{\ln n}}$,
    let $T=(r_0,\ldots,r_m)$ be an oriented tree with $\Delta(T)\leq\Delta$ and $n^\frac14+1\leq |T|\leq3\Delta n^\frac14$, and let $R=(R_0,\ldots,R_m)$ be a random tree in~$G$ according to~$T$.
	Let~$\cS$ be a collection of subsets of~$V(G)$ of size at most~$n^2$.
	Let~$\cE$ be the event that~$R$ is self-avoiding and \mbox{$(\cS,n^{\frac14-\frac{1}{17\sqrt{\ln n}}},n^{-\frac34-\frac{1}{18\sqrt{\ln n}}})$-expected}. Then
	\begin{equation*}
		\pr[\cE] \geq 1 - m^2\frac{b}{n} - n^2\frac{1}{n^3} - 4n\frac{1}{n^3} \geq 1- n^{-\frac13}.
	\end{equation*}
\end{cor}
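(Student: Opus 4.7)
The plan is to prove this by a straightforward union bound, combining the three failure modes (not self-avoiding, $(\cS,a)$-expected failing on some $S \in \cS$, or $c$-expected failing at some vertex $v \in V(G)$) using the already-established Lemma~\ref{lemma: self-avoiding} and Corollary~\ref{cor: azuma corollary}.

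First I would bound the probability that $R$ is not self-avoiding. Since $\textbf{x}$ is $b$-normal, in particular $\textbf{x}_e \leq \frac{b}{n}$ for every $e \in E(G)$, so Lemma~\ref{lemma: self-avoiding} (together with the fact that the oriented version of a random tree obeys the same edgewise upper bound on transition probabilities, since the in/out distinction only swaps which $\textbf{x}_e$ is used) yields $\pr[R \text{ not self-avoiding}] \leq m^2 b/n$.

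Next I would bound the probability that $R$ fails to be $(\cS, n^{1/4 - 1/(17\sqrt{\ln n})})$-expected. For each fixed $S \in \cS$, the first part of Corollary~\ref{cor: azuma corollary} gives
\[
    \pr\!\left[\left|\,|\{j : R_j \in S\}| - \tfrac{m+1}{n}|S|\right| \geq n^{\frac{1}{4} - \frac{1}{17\sqrt{\ln n}}}\right] \leq \frac{1}{n^3};
\]
since $|\cS| \leq n^2$, summing over $S \in \cS$ gives a failure probability at most $n^2 \cdot n^{-3} = n^{-1}$. To handle the $c$-expected condition, I would apply the second and third parts of Corollary~\ref{cor: azuma corollary} at every vertex $v \in V(G)$, once for $N^+(v)$ and once for $N^-(v)$, for each of the two quantities (sum of $\textbf{x}_{vw}$'s and sum of $\textbf{x}_{vw}\log(1/\textbf{x}_{vw})$'s). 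That is four events per vertex, each with failure probability at most $1/n^3$, so the total contribution here is at most $4n \cdot n^{-3} = 4/n^2$.

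Summing the three contributions yields $\pr[\neg\cE] \leq m^2 b/n + n^2/n^3 + 4n/n^3$, which is the bound stated. The only nontrivial verification is the final numerical estimate $1 - m^2 b/n - 1/n - 4/n^2 \geq 1 - n^{-1/3}$. Since $m \leq 3\Delta n^{1/4}$ and $\Delta = e^{\gamma\sqrt{\ln n}}$, we have
\[
    \frac{m^2 b}{n} \leq \frac{9 b \, e^{2\gamma \sqrt{\ln n}}}{n^{1/2}},
\]
and the hierarchy $\gamma \ll 1/b \ll \eps$ together with $1/n \ll \gamma$ forces $e^{2\gamma\sqrt{\ln n}} \leq n^{1/12}$ for large $n$, so this term is $o(n^{-1/3})$ and the other two terms are negligible. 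I do not expect any real obstacle: the heavy lifting was already done in Lemma~\ref{lemma: neighbours tree} and its corollary, and this statement is essentially the packaging step that will be invoked repeatedly in Section~\ref{sec:Counting trees}.
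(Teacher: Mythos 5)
Your proposal is correct and is exactly the argument the paper intends (the corollary is stated without a separate proof precisely because it is this union bound over Lemma~\ref{lemma: self-avoiding}, the $|\cS|\leq n^2$ applications of the first part of Corollary~\ref{cor: azuma corollary}, and the $4n$ per-vertex applications of its remaining parts). Your closing numerical check of $m^2b/n+n^{-1}+4n^{-2}\leq n^{-1/3}$ via $m\leq 3\Delta n^{1/4}$ and $\Delta=e^{\gamma\sqrt{\ln n}}=n^{o(1)}$ is also the right justification for the final inequality.
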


\section{Counting trees}\label{sec:Counting trees}

In this section we prove our main theorem which states that every $(n,\eps)$-digraph~$G$ contains $|\op{Aut}(T)|^{-1}2^{h(G)-n\log e-o(n)}$ copies of any oriented $n$-vertex tree~$T$ with maximum degree at most~$e^{o(\sqrt{\log n})}$.

Our proof strategy is as follows:
Suppose $\frac1n\ll\frac{1}{\gamma}\ll\eps$. We set $\Delta:=e^{\gamma\sqrt{\ln n}}$ and
partition~$T$ into vertex-disjoint subtrees $T',T''$ such that $n^{1-\alpha}\leq|T''|\leq\Delta n^{1-\alpha}$, where $\alpha:=\frac{1}{7000\sqrt{\ln n}}$.
Let~$t't''$ be the unique edge joining~$T'$ and~$T''$, where $t'\in V(T')$.
Using Theorem~\ref{theorem: absorption}, we fix a set $A\subseteq V(G)$ of size $|T''|-n^{1-7\alpha}$ and a vertex $v\in A$ such that the following holds. For any set $B\subseteq V(G)$ with $A\subseteq B$ and $|B|=|T''|$, the digraph~$G[B]$ contains a copy of~$T''$ in which~$t''$ is mapped to~$v$.
We decompose~$T'$ into subtrees $T_1,\ldots,T_k$ such that $t'\in T_1$, and any tree~$T_i$ with $i\in\{2,\ldots,k\}$ intersects exactly one tree~$T_j$ with $j<i$, and that intersection is exactly one vertex \textcolor{black}{(see Lemma~\ref{lemma: tree decomposition})}.
Further, if $n_0:=n-|A|$ and $n_i:=n-|A|-|T_1\cup\ldots\cup T_i|+1$ for $i\in[k-1]$, we make sure that $n_{i-1}^{\frac14}+1\leq|T_i|\leq 3\Delta n_{i-1}^{\frac14}$ for each $i\in[k]$.

We then iteratively embed~$T'$ \textcolor{black}{into~$G_0:=G-A$}.
Define~$Q_i$ to be a copy of $T_1\cup\ldots\cup T_i$ in~$G_0$. Let $t\in V(T_{i+1})$ be the unique vertex such that $t\in V(T_j)$ for some $j<i$, and let~$u\in V(G_0)$ be its image in~$G_0$. Let $G_i:=G_0[(V(G_0)\setminus V(Q_i))\cup \{u\}]$.
Now we are interested in the number of extensions of our already defined partial embedding,
that is, we seek copies of $T_{i+1}$ in $G_i$ where $t_{i+1}$ is embedded onto $u$.
Among those extensions we are only interested in those which are suitably random-like (a property we call \emph{well-behaved}) so that we can iterate this procedure.

We prove that any \mbox{$(n,\eps)$-digraph}~$G$ contains a large number of well-behaved copies of any tree~$T$ with $n^\frac14+1 \leq |T| \leq3\Delta n^\frac14$ (Corollary~\ref{cor: one tree}). In particular, $G_0$ contains a large number of well-behaved copies of~$T_1$, where~$t'$ is embedded to a neighbour of~$v$.
We then show that if we embed well-behaved copies of $T_1,\ldots,T_i$, then for~$G_i$ the necessary conditions of Corollary~\ref{cor: one tree} are satisfied (Lemma~\ref{lemma: iteration basis}).
Thus, we are able to iterate the calculations in Corollary~\ref{cor: one tree} to show that each~$G_i$ contains a large number of well-behaved copies of $T_{i+1}$.
This yields the existence of $2^{h(G)-n\log e-o(n)}$ well-behaved copies of~$T'$ in $G-A$ (Theorem~\ref{theorem: large tree}).
Recall that~$t'$ is embedded onto a neighbour of~$v$.
Using the absorbing property of~$A$, for any such copy~$Q'$ of~$T'$, we find a copy of~$T''$ in~$G-Q'$ in which~$t''$ is embedded onto~$v$, thus embedding~$T$ completely into~$G$. By taking into account that we count unlabelled trees, this yields the desired result.

We need the following theorem.
\begin{theorem}[{\cite[Theorem~2.1]{KM:22}}]\label{theorem: absorption}
    Suppose $\frac1n\ll\gamma\ll\eps$. Let~$\mu$ be such that $n^{-\frac{1}{7000\sqrt{\ln n}}} \leq\mu\leq \eps^4$. Let~$G$ be an \mbox{$(n,\eps)$-digraph}, let~$T$ be an oriented tree with $|T|=\mu n$ and $\Delta(T)\leq e^{\gamma\sqrt{\ln n}}$, and let $t\in V(T)$.

    Then there is a vertex set~$A\subseteq V(G)$ of size $|T|-\mu^7 n$ and a vertex $v\in A$ such that for any set $B\subseteq V(G)$ with $A\subseteq B$ and $|B|=|T|$, the digraph~$G[B]$ contains a copy of~$T$ in which~$t$ is mapped to~$v$.
\end{theorem}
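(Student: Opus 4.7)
The plan is to use the absorption method of R\"odl--Ruci\'nski--Szemer\'edi, adapted for oriented trees in digraphs in the spirit of Montgomery. The strategy is to construct $A$ together with a partial embedding $\phi$ of a large subtree $T^\star\subseteq T$ (containing $t$ and of size $|T|-\mu^7 n$) onto $A$, with $\phi(t)=v$, such that $\phi$ is \emph{robustly extendable}: for any set $W\subseteq V(G)\setminus A$ of size $\mu^7 n$, one can locally modify $\phi$ at many small ``absorber gadgets'' embedded inside $T^\star$ to produce an embedding of all of $T$ into $A\cup W$ with $t\mapsto v$.

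First, I would fix an absorber template, for instance a short oriented path $F$ of length four with a marked interior vertex $r_F$. Since $\delta^0(G)\geq(\frac12+\eps)n$, any two vertices share at least $2\eps n$ common mixed-neighbours under any prescribed orientation pattern. Consequently, for each $w\in V(G)$ there are $\Omega(\eps^2 n^3)$ copies of $F$ in $G$ with $r_F\mapsto w$, and each such copy admits $\Omega(\eps n)$ alternative choices of interior vertex (``swap partners''). So every vertex of $G$ is a swap partner for many absorbers, which will be the source of all flexibility.

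Second, I would choose $T^\star\subseteq T$ with $|T^\star|=|T|-\mu^7 n$ and identify $\mu^7 n$ pairwise vertex-disjoint internal positions in $T^\star$ at which a copy of $F$ fits pendantly; this is possible because $\Delta(T)$ is tiny and most of $T$ lies on long paths. I would then embed $T^\star$ into $G$ using a pinned variant of the Kathapurkar--Montgomery embedding theorem: first choose a random-like partial embedding of the gadget slots from the abundance established above, pinning also $t\mapsto v$ for some $v\in V(G)$; then extend this partial embedding to all of $T^\star$ via a bounded-degree embedding lemma respecting the pinned images. Define $A$ to be the image of the resulting $\phi$. The $\mu^7 n$ extra vertices of $T$ (those in $T\setminus T^\star$) are organised so that they can be embedded into the ``interior-vertex'' images of the gadgets after swapping.

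Finally, to verify the absorbing property, fix any $B\supseteq A$ with $|B\setminus A|=\mu^7 n$ and form the bipartite auxiliary graph between $B\setminus A$ and the gadgets, joining $w$ to a gadget whenever $w$ is a swap partner for that gadget's interior image. The random-like choice of gadget images ensures that this graph is sufficiently quasi-regular for Hall's theorem to yield a perfect matching. Executing all swaps simultaneously frees $\mu^7 n$ interior-vertex images in $A$, into which $T\setminus T^\star$ can then be embedded, producing the required copy of $T$ in $G[B]$ with $t\mapsto v$. The main obstacle will be the third step: simultaneously enforcing the pin $t\mapsto v$ and the gadget-slot constraints while embedding $T^\star$, and doing so in the directed setting. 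Unlike the undirected case, one must track orientation patterns inside each gadget and at its interface with the rest of $T^\star$, which prevents applying the Kathapurkar--Montgomery theorem as a black box and likely requires a tailored two-stage embedding argument combined with a careful analysis of the residual forest $T\setminus T^\star$.
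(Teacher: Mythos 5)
This statement is not proved in the paper at all: it is quoted verbatim as Theorem~2.1 of Kathapurkar and Montgomery \cite{KM:22} and used as a black box, so there is no internal argument for your sketch to be measured against --- what you are proposing is a from-scratch proof of a result whose original proof is a substantial part of \cite{KM:22}. Measured on its own terms, your sketch has genuine gaps. The most serious one is the absorption mechanism itself. You reserve exactly $\mu^7 n$ single-vertex ``swap'' gadgets and, for an arbitrary $B\supseteq A$ with $|B\setminus A|=\mu^7 n$, ask for a perfect matching in the compatibility graph between $B\setminus A$ and the gadgets. With equal sides, a lower bound on the degree of each vertex of $B\setminus A$ (which is all that a random-like placement of gadget images plus concentration can give you, since compatibility of a fixed $w$ with a fixed gadget only holds with probability of order $\eps$ or a constant, not $1-o(1)$) does not imply Hall's condition: an adversarial choice of $B\setminus A$ may consist of vertices whose compatible gadget sets all lie inside a common proper subset of the gadgets, and nothing in your construction excludes this. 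This is precisely the obstruction that Montgomery-type ``robustly matchable'' templates with a flexible set are designed to overcome, and your sketch contains no such ingredient.

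The second gap concerns the residual part of the tree. If $T$ is an oriented path (allowed, since $\Delta(T)\leq e^{\gamma\sqrt{\ln n}}$ only bounds the degree from above), then a subtree $T^\star$ with $|T^\star|=|T|-\mu^7 n$ forces $T\setminus T^\star$ to be a subpath on $\mu^7 n$ vertices; alternatively, if you delete scattered vertices, $T^\star$ is no longer a tree and the ``freed interior-vertex images'' device no longer matches the structure you need to complete. In either case one cannot embed a connected residual subpath onto $\mu^7 n$ freed vertices scattered over vertex-disjoint gadgets, so single-vertex swap absorbers cannot yield the theorem as stated: the absorbing structure has to be designed around the path/tree structure of what is being absorbed, which is exactly what makes the proof in \cite{KM:22} long. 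Finally, the ``pinned'' embedding of $T^\star$ with prescribed random-like gadget slots and $t\mapsto v$ is itself a nontrivial directed embedding statement; the Kathapurkar--Montgomery spanning-tree theorem cannot be invoked for it without circularity, since that theorem rests on the very absorption result you are trying to prove, and an almost-spanning substitute with the required pinning and slot control is not available off the shelf.
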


The next theorem  yields a lower bound on the entropy of a random tree in an $(n,\eps)$-digraph with a $b$-normal perfect fractional matching~\textbf{x} in terms of~$h(\textbf{x})$.

\begin{theorem}\label{theorem: H(R)}
	Suppose $\frac1n\ll\gamma\ll\frac1b\ll\eps$. Let~$\Delta:= e^{\gamma\sqrt{\ln n}}$, let~$G$ be an \mbox{$(n,\eps)$-digraph}, let~$\textbf{x}$ be a \mbox{$b$-normal} perfect fractional matching of~$G$ and let $v_0\in V(G)$.
	Let $Q=(r_0,\ldots,r_m)$ be an oriented tree with $n^\frac14+1 \leq |Q|\leq 3\Delta n^\frac14$ and $\Delta(Q)\leq\Delta$, and let $R=(R_0,\ldots,R_m)$ be a random tree in~$G$ according to~$Q$, where $R_0=v_0$.
	Then
	\begin{equation*}
		H(R) \geq (1-2e^{-\sqrt{\ln n}}) \frac{m}{n} h(\textbf{x}).
	\end{equation*}
\end{theorem}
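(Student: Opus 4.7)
The plan is to express $H(R)$ via the chain rule, exploit the Markov-type structure of the random tree to reduce to a sum of one-step entropies, and then invoke the rapid mixing statement (Corollary~\ref{cor: probability}) to argue that for the overwhelming majority of vertices of $Q$, the random image of its parent is essentially uniformly distributed on $V(G)$, which is exactly what is needed to translate expected entropies into a fraction of $h(\textbf{x})$.

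First I would apply the chain rule $H(R)=\sum_{i=0}^{m}H(R_i\mid R_0,\ldots,R_{i-1})$. Because $R_0=v_0$ is deterministic and, by the very definition of a random tree according to $Q$, the conditional distribution of $R_i$ given $(R_0,\ldots,R_{i-1})$ depends only on the image $R_{p(i)}$ of the BFS-parent of $r_i$ (siblings and their descendants are conditionally independent given the parent), this collapses to
\[
H(R)=\sum_{i=1}^{m}H(R_i\mid R_{p(i)})=\sum_{i=1}^{m}\ex\bigl[h_{\textbf{x}}^{\sigma(i)}(R_{p(i)})\bigr],
\]
where $\sigma(i)\in\{+,-\}$ records whether $r_{p(i)}r_i$ or $r_ir_{p(i)}$ is the corresponding edge of $Q$. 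Note also the identities $\sum_{v\in V(G)}h_{\textbf{x}}^{+}(v)=\sum_{v\in V(G)}h_{\textbf{x}}^{-}(v)=h(\textbf{x})$, which follow directly from the definitions.

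Next I would set $d_0:=\lceil \tfrac{2b^2}{\eps}\sqrt{\ln n}\rceil$ and split $[m]$ into the \emph{deep} indices, where $r_{p(i)}$ has depth at least $d_0$ in $Q$, and the \emph{shallow} ones. For deep $i$, the distribution of $R_{p(i)}$ is precisely that of the endpoint of a length-$\operatorname{depth}(r_{p(i)})$ random walk on $G$ induced by $\textbf{x}$ and starting at $v_0$, whose pattern is determined by the orientations along the root-to-$r_{p(i)}$ path of $Q$; hence Corollary~\ref{cor: probability} applies and yields
\[
\pr[R_{p(i)}=v]=\Bigl(1\pm e^{-\sqrt{\ln n}}\Bigr)\frac{1}{n}
\qquad\text{for every }v\in V(G),
\]
since $e^{-(\eps/2b^2)d_0}\leq e^{-\sqrt{\ln n}}$. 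Consequently
\[
\ex\bigl[h_{\textbf{x}}^{\sigma(i)}(R_{p(i)})\bigr]
\geq\bigl(1-e^{-\sqrt{\ln n}}\bigr)\frac{1}{n}\sum_{v\in V(G)}h_{\textbf{x}}^{\sigma(i)}(v)
=\bigl(1-e^{-\sqrt{\ln n}}\bigr)\frac{h(\textbf{x})}{n}.
\]

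Finally I would bound the shallow indices by the number of vertices in $Q$ of depth at most $d_0$, which is at most $2\Delta^{d_0}=2n^{2b^2\gamma/\eps}$; the hierarchy $\gamma\ll 1/b\ll\eps$ ensures $2b^2\gamma/\eps<\tfrac14$, whence $2\Delta^{d_0}\leq e^{-\sqrt{\ln n}}n^{1/4}\leq e^{-\sqrt{\ln n}}m$. Dropping the nonnegative contribution of shallow indices and assembling the pieces gives
\[
H(R)\geq (m-2\Delta^{d_0})\bigl(1-e^{-\sqrt{\ln n}}\bigr)\frac{h(\textbf{x})}{n}
\geq \bigl(1-e^{-\sqrt{\ln n}}\bigr)^2\frac{m}{n}h(\textbf{x})
\geq \bigl(1-2e^{-\sqrt{\ln n}}\bigr)\frac{m}{n}h(\textbf{x}),
\]
as desired. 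The only delicate point is calibrating $d_0$: it must be large enough that the mixing error $e^{-(\eps/2b^2)d_0}$ is at most $e^{-\sqrt{\ln n}}$, yet small enough that $\Delta^{d_0}$ is a negligible fraction of $m\geq n^{1/4}$. This double requirement is exactly what the bound $\Delta\leq e^{\gamma\sqrt{\ln n}}$ and the hierarchy $\gamma\ll 1/b\ll\eps$ are set up to deliver, and it is essentially the sole place where the maximum-degree hypothesis is used.
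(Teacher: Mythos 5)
Your proposal is correct and follows essentially the same route as the paper: chain rule, reduction of each conditional entropy to $\ex[h_{\textbf{x}}^{\pm}(R_{p(i)})]$, Corollary~\ref{cor: probability} for vertices whose parent is deep enough, and discarding the few shallow vertices via the maximum-degree bound. The only (cosmetic) difference is that you split indices directly by depth threshold $d_0$ and count at most $2\Delta^{d_0}$ shallow vertices, whereas the paper discards the first $a=2\Delta n^{1/8}$ vertices in the breadth-first order and notes their parents already have depth $\Omega(\sqrt{\ln n}/\gamma)$; both calibrations use the hierarchy $\gamma\ll\frac1b\ll\eps$ in the same way.
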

\begin{proof}
	We prove this statement by applying the chain rule of entropy to write~$H(R)$ as sum of conditional entropies, where~$H(R_i)$ is only conditioned on the path between~$R_0$ and~$R_i$ in the random tree. We then only consider those vertices in the tree with a certain minimum distance from the root and utilise Corollary~\ref{cor: probability} to find a lower bound of~$H(R)$ in terms of~$h(\textbf{x})$.
	
	\textcolor{black}{Let~$r_j$ be an ancestor of~$r_i$ and~$r_{i'}$.}
	By definition, $R_i\mid R_j$ and~$R_{i'}\mid R_j$ are independent, hence, \mbox{$H(R_i\mid R_j,R_{i'}) = H(R_i\mid R_j)$}.%
    \COMMENT{
    We have
    \begin{align*}
        H(R_i\mid R_j,R_{i'}) &= \sum_{v,w}\pr[R_j=v,R_{i'}=w]H(R_i\mid R_j=v,R_{i'}=w) \\
        &= \sum_{v,w} \pr[R_j=v,R_{i'}=w]\sum_x \pr[R_i=x\mid R_j=v,R_{i'}=w]\log\frac{1}{\pr[R_i=x\mid R_j=v,R_{i'}=w]} \\
        &= \sum_{v,w} \pr[R_j=v,R_{i'}=w]\sum_x \pr[R_i=x\mid R_j=v]\log\frac{1}{\pr[R_i=x\mid R_j=v]} \\
        &= \sum_v \pr[R_j=v]\sum_x \pr[R_i=x\mid R_j=v]\log\frac{1}{\pr[R_i=x\mid R_j=v]} \\
        &= \sum_v \pr[R_j=v] H(R_i\mid R_j=v)
        = H(R_i\mid R_j)
    \end{align*}
    }
	By induction we conclude that for every $i\in[m]$, there is a path $r_0,r_{i_1},\ldots,r_{i_k},r_i$ in~$Q$ such that $H(R_i\mid R_0,\ldots,R_{i-1})=H(R_i\mid R_0,R_{i_1},\ldots,R_{i_k})$.
    For any ordered set $(v_1,\ldots,v_s)$ of vertices, let~$\cA_{(v_1,\ldots,v_s)}$ be the event that $R_{i_1}=v_1,\ldots,R_{i_s}=v_s$.
    Suppose $r_{i_k}r_i\in E(T)$.
    Using the Markov property we obtain
    \begin{align*}
        H(R_i\mid \cA_{(v_{i_1},\ldots,v_{i_k})}) &= \sum_{v\in V(G)} \pr[R_i=v \mid \cA_{(v_{i_1},\ldots,v_{i_k})}] \log\frac{1}{\pr[R_i=v\mid \cA_{(v_{i_1},\ldots,v_{i_k})}]} \\
        &= \sum_{v\in V(G)} \pr[R_i=v\mid R_{i_k}=v_{i_k}] \log\frac{1}{\pr[R_i=v\mid R_{i_k}=v_{i_k}]} \\
        &= \sum_{v\in V(G)} \textbf{x}_{v_{i_k}v}\log\frac{1}{\textbf{x}_{v_{i_k}v}}
        = h_{\textbf{x}}^+(v_{i_k}).
    \end{align*}
    Thus, using the law of total probability,
    \begin{align*}
        H(R_i\mid R_0,\ldots,R_{i_k})
        &= \sum_{v_{i_1},\ldots,v_{i_k}\in V(G)}\pr[\cA_{(v_{i_1},\ldots,v_{i_k})}] H(R_i\mid \cA_{(v_{i_1},\ldots,v_{i_k})}) \\
        &= \sum_{v_{i_1},\ldots,v_{i_{k}}\in V(G)} \pr[R_{i_k}=v_{i_k}\mid \cA_{(v_{i_1},\ldots,v_{i_{k-1}})}]\pr[\cA_{(v_{i_1},\ldots,v_{i_{k-1}})}] h_{\textbf{x}}^+(v_{i_k}) \\
        &= \sum_{v_{i_k}\in V(G)} \pr[R_{i_k}=v_{i_k}]h_{\textbf{x}}^+(v_{i_k})
        = \ex h_{\textbf{x}}^+(R_{i_k}).
    \end{align*}
    If $r_ir_{i_k}\in E(T)$, we analogously obtain $ H(R_i\mid R_0,\ldots,R_{i_k}) = \ex h_{\textbf{x}}^-(R_{i_k})$.

    Recall that $(r_0,\ldots,r_m)$ is a breadth-first ordering of the vertices of~$T$. As $\Delta(T)\leq\Delta$, all children of a vertex~$r_j$ are in the set $\{r_{j+1},\ldots,r_{(j+1)\Delta}\}$.
    Thus for any $a\in\{2,\ldots,m\}$ and $i\geq a$, the parent of~$r_i$ must be in the set $\{r_{\frac{a}{2\Delta}},\ldots,r_{i-1}\}$.\COMMENT{That follows easily from contradiction.} In particular, $H(R_i\mid R_0,\ldots,R_{i-1})\geq \min_{j\geq\frac{a}{2\Delta}}\{\ex h_{\textbf{x}}^+(R_j),\ex h_{\textbf{x}}^-(R_j)\}$.
    Thus, using the chain rule of entropy,
	\begin{equation*}
		H(R) = \sum_{i=1}^{m} H(R_i\mid R_0,\ldots,R_{i-1})\geq (m-a)\min_{j\geq \frac{a}{2\Delta}} \{\ex h_{\textbf{x}}^+(R_j),\ex h_{\textbf{x}}^-(R_j)\}.
	\end{equation*}
	Let $a:=2\Delta n^\frac18$ and $j\geq\frac{a}{2\Delta}$. Then the depth of~$r_j$ is at least $\frac12\log_\Delta j \geq \frac{\ln n}{16\ln\Delta} = \frac{\sqrt{\ln n}}{16\gamma}$.\COMMENT{As~$T$ has maximum degree~$\Delta$, there are at most~$\Delta^j$ vertices of depth~$j$. Thus, the number of vertices of depth~$\leq j$ is at most $\Delta^j+\Delta^{j-1}+\ldots+\Delta+1 < \Delta^{j+1}$ (induction on~$j$). For $j\geq\Delta^2$, this implies that~$r_j$ has depth at least $\frac12\log_\Delta j$ (if~$r_j$ has depth $\leq \frac12\log_\Delta j \leq \log_\Delta j - 1$, the number of vertices of depth $\leq \frac12\log_\Delta j$ is less than~$\Delta^{\log_\Delta j} = j$, a contradiction).}
	Hence, according to Corollary~\ref{cor: probability},
	for each $v\in V(G)$, we have $\pr[R_j=v] = \frac1n(1\pm e^{-\frac{\eps}{32b^2\gamma}\sqrt{\ln n}}) = \frac1n(1\pm e^{-\sqrt{\ln n}})$, as $\gamma\ll\frac1b\ll\eps$.
    Thus, for each $*\in\{+,-\}$ and $j\geq\frac{a}{2\Delta}$, we obtain
	\begin{align*}
		\ex h_{\textbf{x}}^*(R_j)
		&= \sum_{v\in V(G)} h_{\textbf{x}}^*(v) \pr[R_j=v]
		= \frac1n(1\pm e^{-\sqrt{\ln n}}) \sum_{v\in V(G)} h_{\textbf{x}}^*(v)
		= \frac1n(1\pm e^{-\sqrt{\ln n}}) h(\textbf{x}).
	\end{align*}
    This implies\COMMENT{As $a=2\Delta n^\frac18$, we have $a\leq m\cdot \Delta n^{-\frac{1}{8}} \leq mn^{-\frac19} = me^{-\frac19\ln n}$. Furthermore, we have $e^{-\sqrt{\ln n}} \geq e^{-\frac19\ln n}$ and thus $(1-e^{-\frac19\ln n})(1-e^{-\sqrt{\ln n}}) \geq 1-2e^{-\sqrt{\ln n}}$. Thus, $(m-a)(1-e^{-\sqrt{\ln n}}) \geq m(1-e^{-\frac19\ln n})(1-e^{-\sqrt{\ln n}}) \geq m(1-2e^{-\sqrt{\ln n}})$.}
    $H(R) \geq (m-a)\frac1n(1-e^{-\sqrt{\ln n}})h(\textbf{x})
		\geq (1-2e^{-\sqrt{\ln n}}) \frac{m}{n} h(\textbf{x})$, as desired.
\end{proof}

We now define what it means for a random tree to be \emph{$(G,G')$-well-behaved}.
Let~$G$ be a digraph on~$n$ vertices and~$G'$ an induced subdigraph of~$G$ on~$n'$ vertices.
We define $\cS_{G,G'}:=\{N_{G,G'}^{*}(v) \colon v\in V(G), *\in\{+,-\}\}$.
We say that a set $M\subseteq V(G')$ with $|M|\geq (n')^\frac14$ is \emph{$(G,G')$-well-behaved} if it is $(\cS_{G,G'},(n')^{\frac14-\frac{1}{17\sqrt{\ln n'}}},(n')^{-\frac34-\frac{1}{18\sqrt{\ln n'}}})$-expected, a notion which is defined in Definition~\ref{defin: (S,a,c)-expected}.
We call a subgraph~$Q$ of~$G'$ \emph{well-behaved} if~$V(Q)$ is well-behaved.
If~$R$ is a random tree in~$G'$, we define~$R$ to be \emph{well-behaved} if~$R$ is self-avoiding and if~$V(R)$ is $(G,G')$-well-behaved.

Now let~$G$ be an $(n,\eps)$-digraph and~$G'$ an induced $(n',\eps)$-subdigraph of~$G$ with $n'\geq n^{\frac23}$, and let~$\bx$ be a $b$-normal perfect fractional matching of~$G'$.
By applying Theorem~\ref{theorem: H(R)} we obtain the following corollary, which states that~$G'$ contains essentially at least $2^{\frac{m}{n'}h(\textbf{x})}$ copies of any labelled $(G,G')$-well-behaved tree on~$m$ vertices.

\begin{cor}\label{cor: one tree}
    Suppose $\frac1n\ll\gamma\ll\frac1b\ll\eps$. Let~$\Delta:= e^{\gamma\sqrt{\ln n}}$, let~$G$ be an \mbox{$(n,\eps)$-digraph}, let~$G'$ be an induced $(n',\eps)$-subdigraph of~$G$ with $n'\geq n^{\frac23}$ and let $v_0\in V(G')$.
    Let $\textbf{x}$ be a \mbox{$b$-normal} perfect fractional matching of~$G'$.
	Let~$Q=(r_0,\ldots,r_m)$ be an oriented tree with $(n')^\frac14+1\leq |Q|\leq 3\Delta (n')^\frac14$ and $\Delta(Q)\leq\Delta$. Let~$\cQ$ be the set of $(G,G')$-well-behaved oriented trees in~$G'$ isomorphic to~$Q$ with root~$v_0$.
    Then
    \begin{equation*}
	   \log|\cQ| \geq (1-e^{-\frac12\sqrt{\ln n'}})\frac{m}{n'}h(\textbf{x}).
    \end{equation*}\end{cor}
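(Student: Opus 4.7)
I would consider the random tree $R = (R_0, R_1, \ldots, R_m)$ in $G'$ according to $Q$ with $R_0 = v_0$. Each self-avoiding realisation of $R$ corresponds to a distinct oriented tree in $G'$ isomorphic to $Q$ rooted at $v_0$, so, letting $\cE$ denote the event that $R$ is self-avoiding and $(G,G')$-well-behaved, one has $|\cQ| \geq |\op{supp}(R \mid \cE)|$ and thus $\log|\cQ| \geq H(R \mid \cE)$. Corollary~\ref{cor: well-behaved tree} applied to the $(n',\eps)$-digraph $G'$ with the collection $\cS_{G,G'}$ then gives $\pr[\cE] \geq 1 - (n')^{-1/3}$; note that the hypothesis $|\cS_{G,G'}| \leq (n')^2$ holds since $|\cS_{G,G'}| \leq 2n \leq 2(n')^{3/2}$ by $n' \geq n^{2/3}$.

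Next, Theorem~\ref{theorem: H(R)} applied to $G'$ yields $H(R) \geq (1 - 2e^{-\sqrt{\ln n'}})\frac{m}{n'}h(\textbf{x})$. To transfer this lower bound to $H(R \mid \cE)$, I would apply Lemma~\ref{lemma: entropy}: every realisation $(v_0, v_1, \ldots, v_m) \in \op{supp} R$ has probability equal to a product of $m$ edge weights of $\textbf{x}$, each at least $\frac{1}{bn'}$, so $\pr[R = r] \geq (bn')^{-m}$, and taking $A := (bn')^m$ gives
\[
    H(R) - H(R \mid \cE) \leq 2(n')^{-1/3}\cdot m\log(bn') \leq 4m(n')^{-1/3}\log n'.
\]

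Combining these bounds yields
\[
    \log|\cQ| \geq (1 - 2e^{-\sqrt{\ln n'}})\tfrac{m}{n'}h(\textbf{x}) - 4m(n')^{-1/3}\log n'.
\]
Using $h(\textbf{x}) \geq n'\log(n'/b)$ for $b$-normal perfect fractional matchings of an $(n',\eps)$-digraph, the slack $(e^{-\frac{1}{2}\sqrt{\ln n'}} - 2e^{-\sqrt{\ln n'}})\frac{m}{n'}h(\textbf{x})$ available in the target inequality is of order at least $e^{-\frac{1}{2}\sqrt{\ln n'}}\, m\log n'$, and this easily absorbs the additive error $4m(n')^{-1/3}\log n'$ because $(n')^{-1/3} = e^{-\frac{1}{3}\ln n'} \ll e^{-\frac{1}{2}\sqrt{\ln n'}}$ for large $n'$. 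Hence the claimed bound. The only delicate point, and the main thing to watch out for, is the bookkeeping around applying the earlier results to $G'$ rather than $G$: one must confirm that the tree-size window $(n')^{1/4}+1 \leq |Q| \leq 3\Delta(n')^{1/4}$ is precisely the hypothesis of Corollary~\ref{cor: well-behaved tree} with $n'$ in the role of $n$, and that the hierarchy $1/n' \ll \gamma \ll 1/b \ll \eps$ needed by Theorem~\ref{theorem: H(R)} and Corollary~\ref{cor: well-behaved tree} indeed follows from the assumed hierarchy $1/n \ll \gamma \ll 1/b \ll \eps$ together with $n' \geq n^{2/3}$.
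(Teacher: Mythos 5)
Your proposal is correct and follows essentially the same route as the paper's proof: the same random tree $R$ rooted at $v_0$, the same event $\cE$, the bound $\pr[\cE]\geq 1-(n')^{-\frac13}$ from Corollary~\ref{cor: well-behaved tree} via $|\cS_{G,G'}|\leq 2n<(n')^2$, Lemma~\ref{lemma: entropy} with $A=(bn')^m$, and Theorem~\ref{theorem: H(R)}, with the additive error term absorbed into the multiplicative slack exactly as the paper does (it bounds $2(n')^{-\frac13}m\log(bn')\leq(n')^{-\frac{1}{20}}$ and then absorbs it using $h(\textbf{x})\geq n'\log\frac{n'}{b}$). Your closing remark about transferring the hierarchy and the maximum-degree bound from $n$ to $n'$ is the same bookkeeping the paper performs (by adjusting $\gamma$) and raises no gap.
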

\begin{proof}
    Let~$R$ be a random tree in~$G'$ according to~$Q$ that satisfies $R_0=v_0$.
    Note that~$\cQ$ is exactly the set of $(G,G')$-well-behaved realisations of~$R$ in~$G'$.
    Let~$\cE$ denote the event that~$R$ is $(G,G')$-well-behaved and let \textcolor{black}{$\{T_1,\ldots,T_N\}$} be the set of all realisations of~$R$ in~$G'$. As $n'\geq n^{\frac23}$, we have \textcolor{black}{$|\cS_{G,G'}|\leq 2n<(n')^2$} and thus \textcolor{black}{$\pr[\cE]\geq 1-(n')^{-\frac13}$} by Corollary~\ref{cor: well-behaved tree}. As $\textbf{x}_e\geq\frac{1}{bn'}$ for each $e\in E(G')$, we obtain \textcolor{black}{$\pr[R=T_i] \geq \frac{1}{(bn')^m}$} for each $i\in[N]$.
    Lemma~\ref{lemma: entropy} yields
    $\log|\cQ|\geq H(R\mid \cE) \geq H(R) - 2(n')^{-\frac13}m\log(bn') \geq H(R)-(n')^{-\frac{1}{20}}$.
    By applying Theorem~\ref{theorem: H(R)} we obtain
    $\log|\cQ| \geq (1-2e^{-\sqrt{\ln n'}})\frac{m}{n'} h(\textbf{x}) - (n')^{-\frac{1}{20}} \geq (1-e^{-\frac12\sqrt{\ln n'}})\frac{m}{n'}h(\textbf{x})$.%
    \COMMENT{Note that $2e^{-\sqrt{\ln n'}} \frac{m}{n'}h(\textbf{x}) + (n')^{-\frac{1}{20}} \leq 3e^{-\sqrt{\ln n'}} \frac{m}{n'}h(\textbf{x}) \leq e^{-\frac12\sqrt{\ln n'}}\frac{m}{n'}h(\textbf{x})$ and thus the claim follows.}
\end{proof}

The following lemma yields the basis for iterating Corollary~\ref{cor: one tree}. It states that the removal of a well-behaved tree from an digraph~$G$ with large minimum degree and large entropy results in a digraph which still has large minimum degree and large entropy.
\begin{lemma}\label{lemma: iteration basis}
    Suppose $\frac{1}{n},\frac{1}{n'}\ll\gamma\ll\frac{1}{b}\ll\eps',\eps$. Let $\Delta:= e^{\gamma\sqrt{\ln n'}}$.
    Let~$G$ be an $(n,\eps)$-digraph and let~$G'$ be an induced subdigraph of~$G$.
    Suppose $|N^*_{G,G'}(v)|\geq(\frac12+\eps')n'$ for all $v\in V(G)$ and $*\in\{+,-\}$.
    Let~$Q=(q_0,\ldots,q_m)$ be a $(G,G')$-well-behaved oriented tree in~$G'$ with $(n')^\frac14 + 1 \leq |Q| \leq 3\Delta (n')^\frac14$, and $\Delta(Q)\leq\Delta$.
    Let $n'':=n'-m$ and $\eps'':=\eps'-2(n')^{-\frac34-\frac{1}{18\sqrt{\ln n'}}}$.
    Let $u\in V(G)\setminus V(G')$.
    Then for the digraph $G'':=G[(V(G')\setminus V(Q))\cup\{u\}]$ and for each $v\in V(G)$ and $*\in\{+,-\}$ the following holds.
    \begin{enumerate}[label=\normalfont(\roman*)]
        \item $|N_{G,G'}^{*}(v)\cap V(Q)| = \frac{m+1}{n'}|N_{G,G'}^{*}(v)|\pm (n')^{\frac14-\frac{1}{17\sqrt{\ln n'}}}$, in particular,
        \item $\abs{N_{G,G''}^{*}(v)}\geq(\frac12+\eps'')n''$, in particular,
        \item $G''$ is an \mbox{$(n'',\eps'')$-digraph}.
    \end{enumerate}
    If~$\textbf{x}'$ is a \mbox{$b$-normal} perfect fractional matching of~$G'$, we additionally obtain that
    \begin{enumerate}[label=\normalfont(\roman*)]
    \setcounter{enumi}{3}
        \item $G''$ has a $(1+(n')^{-\frac34-\frac{1}{22\sqrt{\ln n'}}})b$-normal perfect fractional matching $\textbf{x}''$ that satisfies $h(\textbf{x}'')\geq \frac{n''}{n'}h(\textbf{x}') -n''\log\frac{n'}{n''} - (n')^{\frac14-\frac{1}{24\sqrt{\ln n'}}}$.
    \end{enumerate}
\end{lemma}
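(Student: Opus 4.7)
The plan is to observe that the definition of $(G,G')$-well-behaved is calibrated precisely so that Lemma~\ref{lemma: new matching} applies with $F=G$, the ``$G$'' of that lemma being our $G'$, with $\textbf{x}=\textbf{x}'$ and $M=V(Q)$. The four conclusions then follow essentially by bookkeeping.

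First I would prove (i) directly. Since $Q$ is $(G,G')$-well-behaved, $V(Q)$ is $(\cS_{G,G'},(n')^{\frac14-\frac{1}{17\sqrt{\ln n'}}},(n')^{-\frac34-\frac{1}{18\sqrt{\ln n'}}})$-expected, and by construction $\cS_{G,G'}$ contains every set $N_{G,G'}^{*}(v)$. Invoking the $(\cS,a)$-expected part of Definition~\ref{defin: (S,a,c)-expected} with $|V(Q)|=m+1$ gives~(i) immediately.

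For (ii), note that $N_{G,G''}^{*}(v) \supseteq N_{G,G'}^{*}(v)\setminus V(Q)$, so
\[
    |N_{G,G''}^{*}(v)| \geq |N_{G,G'}^{*}(v)| - |N_{G,G'}^{*}(v)\cap V(Q)| \geq \frac{n''-1}{n'}|N_{G,G'}^{*}(v)| - (n')^{\frac14-\frac{1}{17\sqrt{\ln n'}}}
\]
by~(i). Using $|N_{G,G'}^{*}(v)|\geq(\tfrac12+\eps')n'$ and the identity $\eps'' = \eps' - 2(n')^{-\frac34-\frac{1}{18\sqrt{\ln n'}}}$, the positive slack of order $2n''(n')^{-\frac34-\frac{1}{18\sqrt{\ln n'}}}$ dominates the error $(n')^{\frac14-\frac{1}{17\sqrt{\ln n'}}}$ because $1/18 < 1/17$, yielding $|N_{G,G''}^{*}(v)|\geq(\tfrac12+\eps'')n''$ after a short rearrangement. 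Claim (iii) is then (ii) restricted to $v\in V(G'')$.

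For (iv), I would verify each of the four hypotheses of Lemma~\ref{lemma: new matching}. The $c$-expected part of the well-behavedness of $Q$, with $c = (n')^{-\frac34-\frac{1}{18\sqrt{\ln n'}}}$, supplies hypotheses~(i) and~(ii) of that lemma verbatim. The assumption $|N_{G,G'}^{*}(u)|\geq(\tfrac12+\eps')n'$ is hypothesis~(iii), and the already-proven~(i) of the present lemma is hypothesis~(iv). Lemma~\ref{lemma: new matching} then produces a $(1+(n')^{-\frac34-\frac{1}{22\sqrt{\ln n'}}})b$-normal perfect fractional matching $\textbf{x}''$ of $G'' = F[(V(G')\setminus V(Q))\cup\{u\}]$ with the stated entropy bound.

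The proof is essentially a translation exercise; the real work was done earlier in calibrating the parameters in the definition of $(G,G')$-well-behaved to exactly match the hypotheses of Lemma~\ref{lemma: new matching}. The only place requiring genuine (but minor) checking is that the gap between the exponents $-\frac{1}{18\sqrt{\ln n'}}$ and $-\frac{1}{17\sqrt{\ln n'}}$ in (ii) is enough for the slack $2n''(n')^{-\frac34-\frac{1}{18\sqrt{\ln n'}}}$ to swallow the additive error $(n')^{\frac14-\frac{1}{17\sqrt{\ln n'}}}$, which it does comfortably for large $n'$.
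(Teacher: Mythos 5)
Your proposal is correct and follows essentially the same route as the paper: (i) is read off from the $(\cS_{G,G'},a)$-expected part of well-behavedness, (ii) follows from (i) together with the exponent gap $\frac{1}{18\sqrt{\ln n'}}<\frac{1}{17\sqrt{\ln n'}}$ absorbing the additive error, (iii) is immediate, and (iv) is exactly the paper's application of Lemma~\ref{lemma: new matching} with $G,G',G'',V(Q)$ in the roles of $F,G,G',M$, its hypotheses (i)--(ii) coming from $c$-expectedness, (iii) from the semidegree assumption on $N^*_{G,G'}(u)$, and (iv) from part (i).
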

\begin{proof}
    As~$Q$ is a $(G,G')$-well-behaved tree, (i) follows. As $N_{G,G''}^{*}(v) = N_{G,G'}^*(v)\setminus (N_{G,G'}^*(v)\cap V(Q))$, this yields%
        \COMMENT{
            We have $\frac{|S|}{n'}+(n')^{\frac14-\frac{1}{17\sqrt{\ln n'}}} \leq 1+(n')^{\frac14-\frac{1}{17\sqrt{\ln n'}}} \leq (n')^{\frac14-\frac{1}{18\sqrt{\ln n'}}}$.
        }
    \begin{align*}
        |N_{G,G''}^{*}(v)| &= |N_{G,G'}^*(v)| - |N_{G,G'}^*(v)\cap V(Q)| > \frac{n'-(m+1)}{n'}|N_{G,G'}^*(v)| - (n')^{\frac14-\frac{1}{17\sqrt{\ln n'}}} \\
        &\geq \frac{|N_{G,G'}^*(v)|}{n'}n'' - (n')^{\frac14-\frac{1}{18\sqrt{\ln n'}}}
        \geq \left(\frac12+\eps'\right)n'' - (n')^{\frac14-\frac{1}{18\sqrt{\ln n'}}} \\
        &\geq \left(\frac12+\eps''\right)n''
    \end{align*}
    for any $v\in V(G)$.
    To prove~(iv), we apply Lemma~\ref{lemma: new matching} with~$G,G',G'',V(Q)$ playing the role of~$F,G,G',M$, respectively.
    Conditions~(i) and~(ii) in Lemma~\ref{lemma: new matching} follow from~$Q$ being well-behaved, condition~(iii) in Lemma~\ref{lemma: new matching} follows from the assumption that $|N^*_{G,G'}(v)|\geq(\frac12+\eps')n'$ for all $v\in V(G)$ and $*\in\{+,-\}$, while condition~(iv) in Lemma~\ref{lemma: new matching} follows from~(i).
    Thus we are able to apply Lemma~\ref{lemma: new matching} and the claim follows.
\end{proof}

In the next theorem, we iterate Corollary~\ref{cor: one tree} to prove that there are essentially at least $2^{h(G)-n\log e}$ copies of any tree on almost~$n$ vertices.
\begin{theorem}\label{theorem: large tree}
    Suppose $\frac1n\ll\gamma\ll\eps$. 
    Let $\Delta:=e^{\gamma\sqrt{\ln n}}$ and
    let $\alpha:=\frac{1}{7000\sqrt{\ln n}}$.
    Let~$\mu$ be such that $n^{-\alpha}\leq \mu \leq \Delta n^{-\alpha}$.
    Let $(T,t)$ be an oriented labelled rooted tree with $|T|= n-\mu n$ and \textcolor{black}{$\Delta(T)\leq \Delta$}.
    Let~$G$ be an $(n,\eps)$-digraph, $A\subseteq V(G)$ with $|A| = n - |T| - \mu^7 n$, let $u_0\in V(G)\setminus A$.
    Let~$\cQ$ be the set of trees in~$G-A$ isomorphic to~$T$ where~$t$ is mapped to~$u_0$.
    Then $|\cQ| \geq 2^{h(G) - n\log e - \eps n}$.
\end{theorem}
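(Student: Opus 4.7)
The plan is to decompose $T$ via Lemma~\ref{lemma: tree decomposition} into $k$ short rooted subtrees, embed them one by one into $G-A$ with Corollary~\ref{cor: one tree} counting the well-behaved extensions at each step, and multiply the counts. Lemma~\ref{lemma: iteration basis} propagates the hypotheses from one step to the next, and a telescoping computation converts the sum of step-wise entropies into $h(G) - n\log e - o(n)$.

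First I would apply Lemma~\ref{lemma: matching G-A and G} (valid since $|A| \leq \mu n \ll n/\log^2 n$) to obtain a $b$-normal perfect fractional matching $\textbf{x}_0$ of $G_0 := G-A$ with $h(\textbf{x}_0) \geq h(G) - \eps^2 n$, and check that $|N^*_{G, G_0}(v)| \geq (\tfrac12 + \eps_0) n_0$ for some $\eps_0 > 0$ and all $v \in V(G)$. Then apply Lemma~\ref{lemma: tree decomposition} with $n_0 := n - |A|$, yielding subtrees $(T_1, t_1), \ldots, (T_k, t_k)$ with $t_1 = t$, consecutive subtrees meeting in exactly one vertex, $n_i := n_0 - |T_1 \cup \ldots \cup T_i| + 1$, $n_k = \mu^7 n + 1$, and $n_{i-1}^{1/4}+1 \leq |T_i| \leq 3\Delta n_{i-1}^{1/4}$.

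Proceeding by induction on $i$, suppose embeddings of $T_1, \ldots, T_{i-1}$ have been fixed with image $Q_{i-1}$, each step being well-behaved with respect to the then-current digraph. Let $u_{i-1}$ be the image of the overlap vertex $t_i$ and set $G_{i-1} := G_0[(V(G_0) \setminus V(Q_{i-1})) \cup \{u_{i-1}\}]$. Iterating Lemma~\ref{lemma: iteration basis} shows $G_{i-1}$ is an $(n_{i-1}, \eps_{i-1})$-digraph with $|N^*_{G, G_{i-1}}(v)| \geq (\tfrac12 + \eps_{i-1}) n_{i-1}$ for all $v \in V(G)$, carries a $b_{i-1}$-normal perfect fractional matching $\textbf{x}_{i-1}$, and satisfies $h(\textbf{x}_i) \geq \tfrac{n_i}{n_{i-1}} h(\textbf{x}_{i-1}) - n_i \log(n_{i-1}/n_i) - n_{i-1}^{1/4-\Omega(1/\sqrt{\ln n_{i-1}})}$. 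Corollary~\ref{cor: one tree} then produces at least $2^{(1-e^{-\sqrt{\ln n_{i-1}}/2}) m_i h(\textbf{x}_{i-1})/n_{i-1}}$ well-behaved extensions to $T_i$ rooted at $u_{i-1}$, where $m_i := |T_i|-1$.

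Multiplying the step-wise counts and using the rearrangement $\tfrac{m_i}{n_{i-1}} h(\textbf{x}_{i-1}) \geq h(\textbf{x}_{i-1}) - h(\textbf{x}_i) - n_i \log(n_{i-1}/n_i) - n_{i-1}^{1/4-\Omega}$ telescopes the main term to
\[
\sum_{i=1}^k \tfrac{m_i}{n_{i-1}} h(\textbf{x}_{i-1}) \geq h(\textbf{x}_0) - h(\textbf{x}_k) - \sum_{i=1}^k n_i \log(n_{i-1}/n_i) - o(n).
\]
The elementary inequality $n_i \log(1 + m_i/n_i) \leq m_i \log e$ gives $\sum_i n_i \log(n_{i-1}/n_i) \leq (|T|-1)\log e \leq n\log e$, and combined with $h(\textbf{x}_0) \geq h(G) - \eps^2 n$, $h(\textbf{x}_k) \leq n_k \log n_k = o(n)$, and the observation that the outer $(1-o(1))$ factor costs only $o(n)$ (since $h(G) - n\log e = O(n\log n)$ and $e^{-\sqrt{\ln n}/2} \log n \to 0$), this yields $\log|\cQ| \geq h(G) - n\log e - \eps n$. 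The hard part will be the parameter bookkeeping across the $k = O(n^{3/4})$ iterations: keeping $\eps_i$ bounded below by a positive constant and $b_i$ bounded above, and ensuring the cumulative error $\sum_i n_{i-1}^{1/4-\Omega(1/\sqrt{\ln n_{i-1}})}$ remains $o(n)$ despite $n_i$ shrinking toward $\mu^7 n$, all within the hierarchy $\frac1n \ll \gamma \ll \eps$ and the choice $\alpha = 1/(7000\sqrt{\ln n})$.
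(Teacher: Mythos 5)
Your proposal follows essentially the same route as the paper's proof: fix the matching from Lemma~\ref{lemma: matching G-A and G}, decompose $T$ via Lemma~\ref{lemma: tree decomposition}, iterate Corollary~\ref{cor: one tree} using Lemma~\ref{lemma: iteration basis}, and assemble the per-step entropies; the bookkeeping you defer (keeping $\eps_i\geq\frac{\eps}{4}$ and $b_i\leq b$ over all $k\leq n^{\frac34+\frac{7\alpha}{4}}$ steps, and the cumulative $o(n)$ error, where the choice of $\alpha$ enters) is exactly what the paper carries out, and it goes through. Your direct telescoping with $n_i\log\frac{n_{i-1}}{n_i}\leq m_i\log e$ is a correct minor shortcut for the paper's version, which instead unrolls the recursion to $\textbf{x}_0$ and bounds $\sum_i m_{i+1}\log\frac{n_0}{n_i}\leq n\log e+o(n)$ via a product/Stirling argument.
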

\begin{proof}
Note that $G_0:=G-A$ is a $(|G_0|,\frac\eps2)$-digraph.
Let~$b$ such that $ \gamma \ll \frac{1}{b} \ll \eps$.
Let~$\textbf{x}_0$ be a $\frac{b}{2}$-normal perfect fractional matching of~$G_0$ with $h(\textbf{x}_0)\geq h(G)-\frac{\eps}{2}n$ (note that $|A|\leq \frac{n}{\log^2n}$ and thus the existence of such a matching follows from Lemma~\ref{lemma: matching G-A and G}).

We apply Lemma~\ref{lemma: tree decomposition} with~$n-\mu n$ playing the role of~$n$ to decompose~$(T,t)$ into rooted subtrees $(T_1,t_1),\ldots,(T_k,t_k)$ such that the following holds where $n_0:=n-|A|$ and $n_i:=n-|A|-|T_1\cup\ldots\cup T_i|+1$ for $i\in[k-1]$.
    \begin{enumerate}[label=\normalfont(\roman*)]
        \item $V(T) = \bigcup_{i=1}^k V(T_i)$,
        \item $T_i\subseteq T(t_i)$ for every $i\in[k]$,
        \item the depth of $t_1,\ldots,t_k$ is non-decreasing,
         \item for all $i\in\{2,\ldots,k\}$, there is a unique $j<i$ such that~$T_i$ and~$T_j$ intersect, and in this case we have $V(T_i)\cap V(T_j)=\{t_i\}$, and
        \item $n_{i-1}^\frac14+1 \leq |T_i|\leq 3\Delta n_{i-1}^\frac14$ for every $i\in[k]$.
    \end{enumerate}
In particular, $t=t_1$ and $T_1\cup\ldots\cup T_i$ is a tree for every $i\in[k]$.

Let $n_k:=n-|A|-|T_1\cup\ldots\cup T_k| \geq n^{1-7\alpha}$.
As $|T_i|\geq n_k^{\frac14} \geq n^{\frac14-\frac{7\alpha}{4}}$ for each $i\in[k]$, we have $k\leq \frac{n}{\min_{i\in[k]}|T_i|}\leq n^{\frac34+\frac{7\alpha}{4}}$.
Further, as $n_i \geq n_k$ and thus $n\leq n_i^2$ for any $i\in[k]_0$, the tree~$T$ has maximum degree at most $\Delta = e^{\gamma \sqrt{\ln n}} \leq  e^{\gamma\sqrt{\ln n_i^{2}}} = e^{\sqrt{2}\gamma\sqrt{\ln n_i}}$ for any $i\in[k]_0$.
By replacing~$\gamma$ with~$\sqrt{2}\gamma$, we can assume that~$T$ (and thus each subtree~$T_i$) has maximum degree at most~$e^{\gamma\sqrt{\ln n_i}}$ for each~$i\in[k]_0$.

We iteratively embed~$T$ into~$G_0$. To be more precise, in step~$i$ we embed the tree~$T_i$ using the properties of Lemma~\ref{lemma: iteration basis}.
Let~$T_0$ be the tree with vertex set~$\{t_1\}$.
Suppose that $i\in[k-1]_0$ and~$Q_i$ is a copy of $T_0\cup\ldots\cup T_i$ in~$G_0$. Let $G_i':=G_0-Q_i$.
As there is a tree~$T_j$ with $j<i+1$ and $t_{i+1}\in V(T_j)$ by (iv), the image~$u_{i+1}$ of~$t_{i+1}$ in~$G$ satisfies $u_{i+1}\not\in V(G_i')$.
Thus, to embed~$T_{i+1}-t_{i+1}$ into~$G_i'$ to obtain a copy of $T_0\cup\ldots\cup T_{i+1}$, we temporarily add~$u_{i+1}$ to~$G_i'$ and then embed~$T_{i+1}$.
Define $G_i:=G[V(G_i')\cup\{u_{i+1}\}]$.
We further define $G_k:=G_0-Q$ for a copy~$Q$ of~$T$ in~$G_0$.
Note that $n_i=|V(G_i)|$ for each~$i\in[k]_0$.
We employ Lemma~\ref{lemma: iteration basis} to show that this iterative procedure is indeed possible.

In order to make this iteration possible, we show that the important parameters do not degenerate too quickly.
Let~$\bx_i$ be the perfect fractional matching of~$G_i$ given by Lemma~\ref{lemma: iteration basis}~(iv).
Let~$\eps_i$ be such that~$G_i$ is an $(n_i,\eps_i)$-digraph as given by Lemma~\ref{lemma: iteration basis}~(iii).
Lemma~\ref{lemma: iteration basis}~(iv) implies that 
\begin{equation}\label{equation:h(x_i)}
    h(\textbf{x}_i)\geq \frac{n_i}{n_{i-1}}h(\textbf{x}_{i-1}) -n_i\log\frac{n_{i-1}}{n_i} - n_{i-1}^{\frac14-\frac{1}{24\sqrt{\ln n_{i-1}}}}.
\end{equation}

Next we show that $\eps_i\geq \frac{\eps}{4}$ for all $i\in [k]$.
Note that, as $k\leq n^{\frac34+\frac{7\alpha}{4}}$, we obtain $2kn_i^{-\frac34-\frac{1}{18\sqrt{\ln n_i}}} \leq 2k(n^{1-7\alpha})^{-\frac34-\frac{1}{18\sqrt{\ln n^{1-7\alpha}}}} \leq n^{14\alpha -\frac{1}{20\sqrt{\ln n}}} \leq n^{-\frac{1}{30\sqrt{\ln n}}}$ for each $i\in[k]$.%
\COMMENT{
    We have $2k(n^{1-7\alpha})^{-\frac34-\frac{1}{18\sqrt{\ln n^{1-7\alpha}}}}
    \leq 2n^{\frac34+\frac{7\alpha}{4}}n^{-\frac34-\frac{1}{18\sqrt{\ln n^{1-7\alpha}}}+\frac{21\alpha}{4}+\frac{7\alpha}{18\sqrt{\ln n^{1-7\alpha}}}}
    \leq 2n^{14\alpha - \frac{1}{19\sqrt{\ln n}}}
    \leq n^{14\alpha - \frac{1}{20\sqrt{\ln n}}}$
}
Observe that~$G_0$ is an $(n_0,\frac{\eps}{2})$-digraph and thus Lemma~\ref{lemma: iteration basis}~(iii) yields%
\COMMENT{This calculation is the reason why we have to choose~$\alpha\leq\frac{\zeta}{\sqrt{\ln n}}$ with a small constant~$\zeta$.}
    \begin{align*}
        \eps_i
        \geq \frac\eps2-2kn_i^{-\frac34-\frac{1}{18\sqrt{\ln n_i}}}
        \geq \frac{\eps}{2} - n^{-\frac{1}{30\sqrt{\ln n}}}
        > \frac\eps4
    \end{align*}
    for each $i\in[k]$.
    By replacing~$\eps$ with $\frac{\eps}{4}$, we can assume that~$G$ is an~$(n,\eps)$-digraph and that each~$G_i$ is an $(n_i,\eps)$-digraph.
    
    Next we show that~$\bx_i$ is suitably normal for each~$i\in[k]_0$.
    To this end, we first calculate an upper bound for the normality of~$\bx_i$ in~$G_i$ for the smallest~$i$ where $n_i\leq \frac{n}{2}$.
    As for all $j\leq i$, the tree~$T_j$ has at least~$(\frac{n}{3})^{\frac14}$ vertices, we conclude that $i\leq n(\frac{n}{3})^{-\frac14}\leq 2n^{\frac34}$.
    Hence, by Lemma~\ref{lemma: iteration basis} (iv), $\bx_i$ is $b_i$-normal with
    \begin{align*}
        b_i
        &\leq  
        \left(1+\left(\frac{n}{3}\right)^{-\frac34-\frac{1}{22\sqrt{\ln \frac{n}{3}}}}\right)^{2n^{\frac34}} \cdot \frac{b}{2}
        \leq \exp(3n^{-\frac34-\frac{1}{22\sqrt{\ln n}}} \cdot 2n^{\frac34})\cdot \frac{b}{2}
        = \exp(6n^{-\frac{1}{22\sqrt{\ln n}}})\cdot \frac{b}{2} \\
        &\leq \exp(n^{-\frac{1}{23\sqrt{\ln n}}})\cdot \frac{b}{2}.
    \end{align*}
    This means that the normality grows by at most a multiplicative factor of $\exp(n^{-\frac{1}{23\sqrt{\ln n}}})$
    for each time the number of vertices of our considered graph shrinks by a factor of~$2$.
    We can iterate this calculation until we reach~$G_k$. For this, we need at most~$\log n$ iterations.
    As $\exp(n_k^{-\frac{1}{23\sqrt{\ln n_k}}}\log n)\leq 2$, we conclude that $\bx_i$ is $b$-normal for each $i\in [k]_0$.

    Recall that~$\cQ$ denotes the set of all trees in~$G_0$ isomorphic to~$T$ where~$t$ is mapped to~$u_0$.
    We now apply Corollary~\ref{cor: one tree} to each~$G_i$ and use \eqref{equation:h(x_i)}. This yields a lower bound for~$|\cQ|$ in terms of~$h(\textbf{x}_0)$. A suitable estimate for the remaining terms yields the desired lower bound.
    
    Suppose $T_0,\ldots,T_{i}$ have already been embedded in~$G_0$. Recall that $t_{i+1}\in V(T_0\cup\ldots\cup T_{i})$ and~$u_{i+1}$ denotes the image of~$t_{i+1}$ in~$G_0$.
    Denote by~$\cQ_{i+1}$ the set of $(G,G_i)$-well-behaved trees in~$G_i$ isomorphic to~$T_{i+1}$ where~$t_{i+1}$ is mapped to~$u_{i+1}$.
    Let~$Q^*_{i+1}:=\min|\cQ_{i+1}|$, where the minimum is taken over all possible embeddings of~$T_0\cup\ldots\cup T_i$ in~$G_0$ where~$t$ is mapped to~$u_0$.
    Note that $|\cQ|\geq\prod_{i=1}^kQ_i^*$.
    Let $m_i:=|T_{i}|-1$.
    Applying Corollary~\ref{cor: one tree} to~$G_i$ for each~$i\in[k-1]_0$ yields
    \begin{align}\label{equation: log Q}
        \log |\cQ| &\geq \sum_{i=1}^k \log Q_i^*
        \geq \sum_{i=0}^{k-1} (1-e^{-\frac12\sqrt{\ln n_i}}) \frac{m_{i+1}}{n_i} h(\textbf{x}_i)
        \geq (1-e^{-\frac14\sqrt{\ln n}}) \sum_{i=0}^{k-1} \frac{m_{i+1}}{n_i} h(\textbf{x}_i),
    \end{align}
    where we used $n_i\geq n^{1-7\alpha} \geq n^\frac14$ and thus $e^{\sqrt{\ln n_i}} \geq e^{\frac12\sqrt{\ln n}}$ in the last inequality.
    Next we bound~$h(\textbf{x}_i)$ from below.
    Using \eqref{equation:h(x_i)} and $\log\frac{n_{i-1}}{n_i}+\log\frac{n_{i-2}}{n_{i-1}} = \log\frac{n_{i-2}}{n_i}$, we obtain
    \begin{align*}
        h(\textbf{x}_i) &\geq \frac{n_i}{n_{i-1}}h(\textbf{x}_{i-1}) - n_i\log\frac{n_{i-1}}{n_i} - n_{i-1}^{\frac14-\frac{1}{24\sqrt{\ln n_{i-1}}}} \\
        &\geq \frac{n_i}{n_{i-1}}\left(\frac{n_{i-1}}{n_{i-2}}h(\textbf{x}_{i-2}) - n_{i-1}\log\frac{n_{i-2}}{n_{i-1}} - n_{i-2}^{\frac14-\frac{1}{24\sqrt{\ln n_{i-2}}}}\right) - n_i\log\frac{n_{i-1}}{n_i} - n_{i-1}^{\frac14-\frac{1}{24\sqrt{\ln n_{i-1}}}} \\
        &\geq \frac{n_i}{n_{i-2}}h(\textbf{x}_{i-2}) - n_i\log\frac{n_{i-2}}{n_i} - 2n^{\frac14-\frac{1}{24\sqrt{\ln n}}}.
    \end{align*}
    Iterating this calculation yields
    \begin{align}\label{equation: h(x_i) the second}
        h(\textbf{x}_i)
        &\geq \frac{n_i}{n_0}h(\textbf{x}_0) - n_i\log\frac{n_0}{n_i} - in^{\frac14-\frac{1}{24\sqrt{\ln n}}}
        \geq \frac{n_i}{n_0}h(\textbf{x}_0) - n_i\log\frac{n_0}{n_i} - kn^{\frac14-\frac{1}{24\sqrt{\ln n}}}
    \end{align}
    for each $i\in[k]$.
    Recall that $n_i\geq n^{1-7\alpha}$ and $m_{i+1}\leq 3\Delta n_i^{\frac14}$. 
    Thus $\frac{m_{i+1}}{n_i}\leq 3\Delta n_i^{-\frac34} \leq 3\Delta n^{-\frac34+\frac{21\alpha}{4}} \leq 3n^{-\frac34+\frac{21\alpha}{4}+\frac{\gamma}{\sqrt{\ln n_i}}}$ and hence $\frac{m_{i+1}}{n_i}kn^{\frac14-\frac{1}{24\sqrt{\ln n}}} \leq 3n^{-\frac34+\frac{21\alpha}{4}+\frac{\gamma}{\sqrt{\ln n_i}}}n^{\frac34+\frac{7\alpha}{4}}n^{\frac14-\frac{1}{24\sqrt{\ln n}}} 
    = 3n^{\frac14+7\alpha+\frac{\gamma}{\sqrt{\ln n_i}}-\frac{1}{24\sqrt{\ln n}}}
    \leq n^{\frac14-\frac{1}{100\sqrt{\ln n}}}$.
    Combining this with~\eqref{equation: h(x_i) the second} yields
    \begin{align*}
        \frac{m_{i+1}}{n_i} h(\textbf{x}_i)
        \geq \frac{m_{i+1}}{n_0}h(\textbf{x}_0) - m_{i+1}\log\frac{n_0}{n_i} - n^{\frac14-\frac{1}{100\sqrt{\ln n}}}.
    \end{align*}
    Hence \eqref{equation: log Q} implies
    \begin{equation}\label{equation: log Q the second}
        \log|\cQ|
        \geq (1-e^{-\frac14\sqrt{\ln n}})\sum_{i=0}^{k-1} \frac{m_{i+1}}{n_0}h(\textbf{x}_0) - \sum_{i=0}^{k-1} m_{i+1}\log\frac{n_0}{n_i}-kn^{\frac14-\frac{1}{100\sqrt{\ln n}}}.
    \end{equation}
    We bound the first term from below.
    Note that $\frac{1}{n_0}(m_1+\ldots+m_k)=\frac{|T|-1}{n_0}\geq\frac{n-\Delta n^{1-\alpha}-1}{n} \geq 1-e^{-\frac{1}{10^4}\sqrt{\ln n}}$%
    \COMMENT{$\Delta n^{-\alpha} \leq n^{\frac{\gamma}{\sqrt{\ln n}}-\frac{1}{7000\sqrt{\ln n}}} = e^{\gamma\sqrt{\ln n}-\frac{1}{7000}\sqrt{\ln n}} \leq e^{-\frac{1}{10^4}\sqrt{\ln n}}$ if we choose $\gamma\leq \frac{1}{7000}-\frac{1}{10^4}$}
    and thus
    \begin{equation}\label{equation: log Q first term}
        (1-e^{-\frac14\sqrt{\ln n}})\sum_{i=0}^{k-1} \frac{m_{i+1}}{n_0}h(\textbf{x}_0) \geq (1-e^{-\frac14\sqrt{\ln n}})(1-e^{-\frac{1}{10^4}\sqrt{\ln n}})h(\textbf{x}_0) \geq (1-e^{-\frac{1}{10^5}\sqrt{\ln n}}) h(\textbf{x}_0).
    \end{equation}
    We bound the second term of~\eqref{equation: log Q the second} from below. As $n-n_k= m_1+\ldots+m_k+1$, we obtain $\sum_{i=0}^{k-1}m_{i+1}\log n_0 \leq \log n\cdot\sum_{i=0}^{k-1} m_{i+1} \leq n\log n - n_k\log n$.
    Note that for each $i\in[k-1]_0$,
    \begin{align*}
        m_{i+1}\log n_i &\geq \log n_i + \log (n_i-1) + \ldots + \log (n_i-m_{i+1}+1) \\
        &= \log (n_i(n_i-1)\cdot\ldots\cdot(n_{i+1}+1)).
    \end{align*}
    Thus,
    \begin{align*}
        \sum_{i=0}^{k-1}m_{i+1}\log n_i
        &\geq \log(n_0(n_0-1)\cdot\ldots\cdot (n_k+1))
        \geq \log\frac{n!}{n^{n-n_0}n_k!} \\
        &= \log n! - \log n_k! - (n-n_0)\log n.
    \end{align*}
   As $n-n_0=|A|\leq \Delta n^{1-\alpha}$ and $\log n! = n\log n - n\log e \pm \log n$ by Stirling's approximation, we obtain%
   \COMMENT{We have $2\log n + \Delta n^{1-\alpha} \log n \leq \Delta^2 n^{1-\alpha} \leq n^{1-\alpha+\frac{2\gamma}{\sqrt{\ln n}}}\leq n^{1-\frac{\alpha}{2}}$}
    \begin{align*}
        \sum_{i=0}^{k-1}m_{i+1}\log n_i &\geq n\log n - n\log e - n_k\log n_k + n_k\log e - 2\log n - \Delta n^{1-\alpha}\log n \\
        &\geq n\log n - n\log e - n_k\log n_k - n^{1-\frac{\alpha}{2}}.
    \end{align*}
    This yields
    \begin{align}\label{equation: log Q second term}
        \sum_{i=0}^{k-1} m_{i+1}\log\frac{n_0}{n_i}
        &\leq - n_k\log n + n\log e + n_k\log n_k + n^{1-\frac{\alpha}{2}} \leq n\log e + n^{1-\frac{\alpha}{2}}.
    \end{align}
    Combining~\eqref{equation: log Q the second} with~\eqref{equation: log Q first term} and~\eqref{equation: log Q second term}, we obtain
    \begin{align*}
        \log |\cQ|
        &\geq (1-e^{-\frac{1}{10^5}\sqrt{\ln n}})h(\textbf{x}_0) - n\log e - n^{1-\frac{\alpha}{2}} - kn^{\frac14-\frac{1}{100\sqrt{\ln n}}} \\
        &\geq h(\textbf{x}_0) - n\log e - n^{1-\frac{1}{10^6\sqrt{\ln n}}}.
    \end{align*}
    As we chose~$h(\textbf{x}_0)$ such that $h(\textbf{x}_0)\geq h(G) - \frac{\eps}{2} n$, this yields the desired result.
\end{proof}

We are now able to prove our main theorem, which follows from Theorem~\ref{theorem: large tree} and Theorem~\ref{theorem: absorption}. 

\begin{proof}[Proof of Theorem~\ref{thm:main_directed}]
Suppose $\frac{1}{n}\ll\gamma\ll\eps$. Let~$G$ be an $(n,\eps)$-digraph, let $\Delta:=e^{\gamma\sqrt{\ln n}}$ and~$T$ be an oriented tree on~$n$ vertices with $\Delta(T)\leq \Delta$.
Label the vertices of~$T$ arbitrarily and choose a vertex~$r\in V(T)$ as root of~$T$.
Let $\alpha:=\frac{1}{7000\sqrt{\ln n}}$.
Choose~$t''$ at maximal distance from~$r$ subject to $|T(t'')|\geq n^{1-\alpha}$. Then $n^{1-\alpha}\leq|T(t'')|\leq\Delta n^{1-\alpha}$. Note that~$t''$ has exactly one neighbour~$t'$ in $V(T)\setminus V(T(t''))$. Define $T':=T-T(t'')$ and $T'':=T(t'')$.

Let $\mu:=\frac{|T''|}{n}$. Note that $\mu\leq e^{\gamma\sqrt{\ln n}-\frac{1}{7000}\sqrt{\ln n}} \leq e^{-\frac{1}{10^4}\sqrt{\ln n}}<\eps^4$ as $\gamma\ll\eps$. We apply Theorem~\ref{theorem: absorption} with~$T''$ playing the role of~$T$ and~$t''$ playing the role of~$t$ to fix a vertex set~$A\subseteq V(G)$ of size $|T''|-\mu^7n$ and a vertex $v\in A$ such that for any set $B\subseteq V(G)$ with $A\subseteq B$ and $|B|=|T''|$, the digraph~$G[B]$ contains a copy of~$T''$ in which~$t''$ is mapped to~$v$. Further, let $w\in N^-(v)$ if $t't''\in E(T)$, otherwise let $w\in N^+(v)$.

As~$T'$ satisfies the conditions of Theorem~\ref{theorem: large tree}, we find at least $2^{h(G)-n\log e-\eps n}$ copies of~$T'$ in~$G-A$ such that~$t'$ is mapped to~$w$. Let~$Q'$ be a tree in~$G-A$ isomorphic to~$T'$.
As $|G-Q'|=|T''|$, using the above mentioned property of~$A$, we find a copy of~$T''$ in $G-Q'$ in which~$t''$ is mapped to~$v$. As~$t'$ is mapped to~$w$, we are able to connect~$T'$ and~$T''$ in~$G$, embedding~$T$ completely into~$G$.

In order to count the number of subgraphs in~$G$ isomorphic to~$T$, 
one has to divide the number of labelled copies of $T$ in $G$ by the number of automorphisms of $T$, which yields the desired result.
\end{proof}

\begin{proof}[Proof of Theorem~\ref{thm:main_undirected}]
Suppose $\frac{1}{n}\ll\gamma\ll\eps$. Let~$G$ be an $(n,\eps)$-graph. 
Let~$D$ be the digraph that arises from~$G$ by duplicating every edge of~$G$ and orienting them in opposite directions.
To be more precise, we set $V(D):=V(G)$ and $E(D):=\{(v,w)\mid vw\in E(G)\}$. 
If~$\textbf{x}$ is any perfect fractional matching of~$G$, we obtain a perfect fractional matching~$\textbf{y}$ of~$D$ by setting $\textbf{y}_{(v,w)}:=\textbf{x}_{vw}$. Then $h(\textbf{y})= 2h(\textbf{x})$, and, in particular, $h(D)\geq 2h(G)$.
Let~$T$ be a tree on~$n$ vertices with $\Delta(T)\leq e^{\gamma \sqrt{\ln n}}$. We orient the edges of~$T$ arbitrarily. Call this oriented tree $T_D$. By Theorem~\ref{thm:main_directed}, there are at least $|\op{Aut}(T)|^{-1}2^{h(D)-n\log e-\eps n}\geq |\op{Aut}(T)|^{-1}2^{2h(G)-n\log e-\eps n}$ copies of~$T_D$ in~$D$. Note that any copy~$Q_D$ of~$T_D$ in~$D$ yields a copy~$Q$ of~$T$ in~$G$ by ignoring the orientation of the edges.
This yields the desired result.
\end{proof}

\section{Conclusion}
We have shown that any digraph~$G$ on~$n$ vertices that satisfies~$\delta^0(G)\geq(\frac12+o(1))n$ contains at least $|\op{Aut}(T)|^{-1}(\frac12-o(1))^nn!$ copies of any oriented tree~$T$ on~$n$ vertices with $\Delta(T)\leq e^{o(\sqrt{\log n})}$.
This follows from the following stronger result. 
If $h(G)$ denotes the entropy of~$G$, that is, the maximum entropy of all perfect fractional matchings of~$G$, then~$G$ contains at least~$2^{h(G)-n\log e-\eps n}$ (labelled) copies of~$T$.
The analogous results hold for undirected graphs and trees.

It would be interesting to find an upper bound on the number of copies of spanning trees in (di)graphs. 
We believe that the lower bound in Theorem~\ref{thm:main_directed} is also an upper bound and the inequality is in fact an equality (up to the error term).
Unfortunately, there seems to be no easy way to obtain an upper bound with the methods as it is obtained for Hamilton cycles.

Note that we require~$T$ to satisfy $\Delta(T)\leq e^{o(\sqrt{\log n})}$, whereas~$G$ contains any tree on~$n$ vertices with $\Delta(T)\leq o(\frac{n}{\log n})$.
Extending our results to trees with maximum degree $o(\frac{n}{\log n})$ is another interesting open problem.

\bibliographystyle{amsplain}
\providecommand{\bysame}{\leavevmode\hbox to3em{\hrulefill}\thinspace}
\providecommand{\MR}{\relax\ifhmode\unskip\space\fi MR }
\providecommand{\MRhref}[2]{%
	\href{http://www.ams.org/mathscinet-getitem?mr=#1}{#2}
}
\providecommand{\href}[2]{#2}

\end{document}